\theoremstyle{plain} 
\newtheorem{theorem}{Theorem}
\newtheorem*{theorem*}{Theorem}
\newtheorem{proposition}[theorem]{Proposition}
\newtheorem{lemma}[theorem]{Lemma}
\newtheorem{corollary}[theorem]{Corollary}
\newtheorem{remark}[theorem]{Remark}
\def\mand{\qquad\mbox{and}\qquad}
\def\ssum{\mathop{\sum\ldots \sum}}
\newcommand{\re}{\textup{Re}}
\newcommand{\im}{\textup{Im}}
\newcommand{\abs}[1]{\left\lvert#1\right\rvert}
\title{\bf Bounds for moments of quadratic character sums and theta functions}
\author{
  Marc Munsch, Yuichiro Toma}
\newcommand{\Addresses}{{
  \bigskip
  \footnotesize

   \textsc{Marc Munsch,  Institut Camille Jordan, Universit\'{e} Jean Monnet
42000 St-Etienne, 20 Rue du Dr R\'{e}my Annino, France}\par\nopagebreak
  \textit{E-mail address:} \texttt{marc.munsch@univ-st-etienne.fr}

  \textsc{Yuichiro Toma,  Global Education Center, Waseda University, 1-6-1 Nishiwaseda, Shinjuku-ku, Tokyo 169-8050, Japan}\par\nopagebreak
  \textit{E-mail address:} \texttt{yuichiro.toma@aoni.waseda.jp}

}}
\begin{document}

\begin{abstract}
In this paper, we investigate the size of moments of quadratic character sums averaged over the family of fundamental discriminants. We obtain an asymptotic formula for all integer moments in a restricted range of parameters using a multivariate tauberian theorem. As a consequence, we prove unconditional lower bounds for all even integer moments of quadratic character sums in a wide range of parameters. Moreover, assuming the Generalised Riemann Hypothesis (GRH), we prove a sharp upper bound on moments of character sums of arbitrary length. In a similar fashion, we obtain unconditional lower bounds on moments of quadratic theta functions and matching conditional upper bounds under GRH. In the case of the second moment of theta functions, we prove an optimal upper bound unconditionally improving the previous results of Louboutin and the first named author.
\end{abstract}

\begin{keyword}
Quadratic Dirichlet characters \sep 
Jutila's conjecture \sep  multivariable Tauberian Theorems \sep moments of $L$- functions. \\
\MSC 11L40 \sep 11M06 \sep 11N37 \end{keyword}
\maketitle

\section{Introduction}
A fundamental problem in analytic number theory is to give precise estimates for 
$$\sum_{n \leq x} \chi(n) $$ where $\chi$ is a non-principal Dirichlet character modulo $q$. The ultimate goal would be to obtain cancellation for $x$ as small as possible. In many instances, it is sufficient to have precise information about the means of character sums, namely $$M_{k,q}(x):=\frac{1}{q-1}  \sum_{\chi \bmod q \atop \chi \neq \chi_0 } \left\vert \sum_{n \leq x} \chi(n) \right\vert^k, \qquad (k>0).$$ 
A lot of effort has been put to understand the size of $M_{k,q}(x)$ for the widest possible range of $x$. In the case of \textit{even integer moments}, precise results are known (see  \cite{ACZ,CZ,GS,KE,MV,thetalow,SZ1} and the references therein).  Recently, Harper \cite{H4} initiated the study of \textit{low moments} and obtained surprising upper bounds that beat the usual expected square-root cancellation. These results are related to his breakthrough work \cite{H2} on the problem of low moments of Steinhaus random multiplicative functions. In this paper, we focus on the important family of real characters.  \\

We define for $k>0$ the moments of quadratic character sums as follows:

$$S_{k}(X,Y):= \sideset{}{^*}{\sum}_{0<d \leq X}  \left( \sum_{n \leq Y} \chi_{d}(n) \right)^{k}$$  where $\sum^*$ indicates a sum over fundamental discriminants. In relation to class number problems, Jutila \cite{Ju1,Ju2} initiated the study of $S_{k}(X,Y)$  and formulated the following guiding conjecture. For any even integer $k\geq 2$ and large $X,Y$, there exist constants $c_1(k)$ and $c_2(k)$ such that 

$$ S_k(X,Y) \leq c_1(k)XY^{k/2} (\log X)^{c_2(k)}.$$ Unconditionally, this conjecture is known to be true for $k=2$ (with $c_2(2)=1$, see \cite{Armon,Ju1}). For $k=4$, using Heath-Brown's large sieve \cite{HB}, Virtanen \cite{Vir} managed to get a partial result replacing the power of logarithm by $X^{\varepsilon}$. Recently, Gao and Zhao \cite[Theorem $1.3$]{GZ} proved under the assumption of the Generalized Riemann Hypothesis (GRH) that for any real $k>1+\sqrt{5}$ and large $X,Y$, \begin{equation}\label{upperGZ}S_k(X,Y) \ll XY^{k/2}(\log X)^{\frac{k(k-1)}{2}+1}.\end{equation} Note that a similar result was obtained in \cite{ZZ} considering the family of prime discriminants. Their approach relied on optimal bounds on shifted quadratic $L$-functions, a strategy that appeared previously in the work of the first named author \cite{Munschtheta} and Szab\'{o} \cite{SZ2} in the case of $L$-functions associated to characters of fixed modulus. \\ 
Our goal in this note is to refine these results by obtaining lower and upper bounds of the same order of magnitude in the case of \textit{high} integer moments. We prove the following lower bound for all integer moments in a wide range of parameters.

\begin{theorem}\label{lowquad} For any fixed even integer $k\geq 2$ and $X^{\varepsilon} \ll Y\ll X^{2/3-\alpha}$ for some $\alpha>0$,  we have
 
\begin{equation}\label{lowSk} S_{k}(X,Y) \gg XY^{k/2} (\log X)^{\frac{k(k-1)}{2}}  \end{equation}  Moreover, assuming GRH, \eqref{lowSk} remains valid in the range  $X^{\varepsilon} \ll Y\ll X^{1-\alpha} $ for some $\alpha>0$.
\end{theorem}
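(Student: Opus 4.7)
I would prove Theorem~\ref{lowquad} by the mollifier method of Rudnick and Soundararajan (as refined in subsequent work on sharp lower bounds for moments of families of $L$-functions), adapted to the present setting of quadratic character sums of length $Y$ averaged over fundamental discriminants $0 < d \le X$. Set $T(d) := \sum_{n \le Y}\chi_d(n)$; since $k$ is even, $T(d)^k \ge 0$, so absolute values are unnecessary. The idea is to introduce an auxiliary Dirichlet polynomial $A(d) = A(\chi_d)$ and apply H\"older's inequality with conjugate exponents $k$ and $k/(k-1)$, which yields
\[
S_k(X,Y) \;\ge\; \frac{\bigl(\sideset{}{^*}\sum_{d \le X} T(d)\,A(d)\bigr)^{k}}{\bigl(\sideset{}{^*}\sum_{d \le X} A(d)^{k/(k-1)}\bigr)^{k-1}}.
\]
Writing the numerator as $M_1^{k}$ and the denominator as $M_2^{k-1}$, the bound \eqref{lowSk} reduces to proving $M_1 \gg X Y^{k/2}(\log X)^{k(k-1)/2}$ together with $M_2 \ll X Y^{k/2}(\log X)^{k(k-1)/2}$.

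A natural choice of mollifier, in the spirit of the paper's main asymptotic formula, is $A(d) = T(d;N)^{k-1}$, with $T(d;N) := \sum_{n \le N}\chi_d(n)$ and $N$ chosen in the range of validity of that asymptotic formula. With this choice, $M_2 = S_k(X,N)$ is evaluated directly by the asymptotic formula, giving $M_2 \sim c_k\, X N^{k/2}(\log N)^{k(k-1)/2}$. The ``mixed'' sum $M_1 = \sideset{}{^*}\sum_d T(d;Y)\,T(d;N)^{k-1}$, after expansion and use of the standard orthogonality of $\chi_d$ over fundamental discriminants, is dominated by the diagonal terms where the product $n_0 n_1 \cdots n_{k-1}$ is a perfect square (here $n_0 \le Y$ and $n_1,\dots,n_{k-1} \le N$); the diagonal yields $M_1 \sim c_k'\, X Y^{1/2} N^{(k-1)/2}(\log X)^{k(k-1)/2}$ via a $k$-fold Mellin--Perron contour integral whose residues at the pole loci $s_i + s_j = 1$ for $1 \le i < j \le k$ account for exactly $\binom{k}{2} = k(k-1)/2$ powers of $\log X$, while the factor $Y^{k/2}$ emerges from evaluating the exponentials $Y^{s_i}$ at the central point $s_i = 1/2$. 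Plugging these into the H\"older bound, the $N$-dependence cancels and one recovers exactly $S_k(X,Y) \gg X Y^{k/2}(\log X)^{k(k-1)/2}$.

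The main obstacle is the control of the off-diagonal contributions to $M_1$: for $m = n_0 n_1 \cdots n_{k-1}$ not a perfect square, one needs $\sideset{}{^*}\sum_{d \le X}\chi_d(m)$ to be negligible, on average over $m \le Y N^{k-1}$. Unconditionally, Heath-Brown's quadratic large sieve (or, equivalently, Poisson summation over fundamental discriminants) controls this provided $Y\cdot N^{k-1} \ll X^{1-\varepsilon}$; combined with the constraint that $N$ lie in the restricted range of the tauberian asymptotic, optimising $N$ yields the range $Y \ll X^{2/3 - \alpha}$. Under GRH, sharp individual bounds on $L(s,\chi_{dm})$---equivalently, Lindel\"of-on-average estimates of the form $|L(\tfrac12+it,\chi_d)| \ll (d(1+|t|))^{\varepsilon}$---allow off-diagonal lengths up to $X^{1-\varepsilon}$, extending the admissible range to $Y \ll X^{1-\alpha}$. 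The technical heart of the proof is therefore the simultaneous optimisation of the mollifier length $N$ against the available off-diagonal estimate in each regime, ensuring that $N$ is large enough to saturate the full $(\log X)^{k(k-1)/2}$ main term while keeping $Y N^{k-1}$ below the off-diagonal threshold.
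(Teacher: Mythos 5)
Your plan is essentially the paper's own proof: the authors also apply H\"older's inequality with the mollifier $A(d)=T(d;N)^{k-1}$ (they fix $N=Y^{\varepsilon}$ from the start), bound the diagonal by counting weighted solutions of $n_0n_1\cdots n_{k-1}=\square$ via de la Bret\`eche's tauberian theorem, and control the off-diagonal with the quadratic orthogonality estimate $\sideset{}{^*}\sum_{d\le X}\chi_d(m)\ll X^{1/2}m^{1/4}\log m$ for non-square $m$ (and $\ll X^{1/2+\varepsilon}m^{\varepsilon}$ under GRH); for $M_2$ they only prove an upper bound rather than invoking the asymptotic formula, which is all H\"older requires.

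One point in your accounting is off, though it does not change the final ranges. The unconditional off-diagonal threshold is not $YN^{k-1}\ll X^{1-\varepsilon}$: with P\'olya--Vinogradov the off-diagonal contribution is $\ll X^{1/2}(YN^{k-1})^{5/4+o(1)}$, so the requirement is $(YN^{k-1})^{3/4}\ll X^{1/2}$, and it is the loss of $m^{1/4}$ here --- not the restricted range of $N$ in the tauberian asymptotic --- that forces $Y\ll X^{2/3-\alpha}$ once $N$ is taken to be a small power of $Y$. The threshold $YN^{k-1}\ll X^{1-\alpha}$ is what one gets under GRH, where the $m^{1/4}$ is replaced by $m^{\varepsilon}$; this is exactly how the paper obtains the conditional extension of the range.
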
In the case of moments of zeta and $L$-functions, several methods have been developed to get lower bounds for non-integer values of the exponent $k$. Without being exhaustive, Radziwill and Soundararajan \cite{RaS} obtained optimal results for $k \geq 2$ while the recent work of Heap and Soundararajan \cite{HS} deals with the case of smaller $k$ (see also the recent results \cite{Curran,GZ3} developing this approach in the case of shifted moments). In the case of character sums modulo a large fixed prime $q$, based on the work of Harper  \cite{H1,H3,H4}, Szab\'{o} \cite{SZ2}  managed to prove lower bounds of the expected size for real $k\geq 2$. We leave this open for future work in the case of quadratic characters. Moreover, Harper \cite{H2,H4} highlighted the fact that a different phenomenon occurs for \textit{low} moments in the family of Dirichlet characters modulo $q$ which might be of interest to study in our situation. The best known lower bounds (losing a small power of logarithm) were obtained by the first named author in collaboration with de la Bret\`{e}che and Tenenbaum \cite{dlbMT}. \\

If $Y$ is very small compared to $X$, we can prove an asymptotic formula for $S_k(X,Y)$. To state the result, we introduce $r=k(k-1)/2$ linear forms $\mathcal{H} := \left( h^{(1)}, \dots, h^{(r)}\right)$ from $\mathbb{C}^k$ to $\mathbb{C}$ whose restriction to $\mathbb{R}_{\geq 0}$ have values in $\mathbb{R}$ as follows. For $\bm{x}=(x_1,\dots,x_k) \in \mathbb{C}^k$, we define 
\begin{align*}
\left( h^{(1)}(\bm{x}), \dots, h^{(r)}(\bm{x})\right) &= (x_1 \ x_2 \ \cdots \ x_k) \left( \bm{a}_1 \ \bm{a}_2 \ \cdots \ \bm{a}_r \right), 
\end{align*}
where $\bm{a}_1, \bm{a}_2, \dots, \bm{a}_r \in M_{k,1}(\mathbb{N}_0)$ are column vectors of length $k$ with two entries equal to $1$ and all other entries equal to $0$. Then, we obtain the following result.

\begin{theorem}
\label{asymptotic for S_k(X,Y)}
For any fixed integer $k\geq 1$ and  $Y \ll X^{\frac{2}{3k}}(\log X)^{\frac{2}{3}(k-1)-\frac{4}{3k}}$, we have
\begin{align*}
S_k(X,Y) &= \frac{c_k \gamma_k}{\zeta(2)} XY^\frac{k}{2}(\log Y)^{\frac{k(k-1)}{2}}+ O\left(XY^\frac{k}{2}(\log Y)^{\frac{k(k-1)}{2}-1}\right),
\end{align*}
where $c_k$ is given by the Euler product
\begin{equation*}
  c_k=  \prod_{p} \frac{\left(1- \frac{1}{p} \right)^{\frac{k(k+1)}{2}}}{1+\frac{1}{p}} \left(\frac{1}{p}+\frac{1}{2}\left[\left(1-\frac{1}{\sqrt{p}}\right)^{-k}+\left(1+\frac{1}{\sqrt{p}}\right)^{-k} \right]\right) 
\end{equation*}
and $\gamma_k$ is the volume of a polytope in $\mathbb{R}^{r}$  defined as the set of $(u_i) \in \mathbb{R}^{r}$ such that,
$$ \textrm{for each }   j \leq k: \sum_{i=1}^{r} h^{(i)}(\bm{e}_j) u_i = 1.$$ In other words, we have 
\begin{align*}
    \gamma_k &= \int_{\mathcal{A}_k} du_1\cdots du_r,
\end{align*}
where
\[
\mathcal{A}_k = \left\{ (u_1,\dots,u_r) \in [0,1]^r \middle| \sum_{i=1}^{r} h^{(i)}(\bm{e}_j) u_i \leq 1 \text{ for all } 1 \leq j \leq k \right\}.
\]
\end{theorem}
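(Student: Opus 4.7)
The strategy is to reduce the moment to a multivariable counting problem by expanding the $k$-th power, invoking a standard evaluation of character sums averaged over fundamental discriminants, and then extracting the leading asymptotic via a multivariable tauberian theorem applied to the associated Dirichlet series.

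I first write
$$S_k(X,Y) = \sum_{n_1,\dots,n_k \leq Y} \sideset{}{^*}{\sum}_{0<d\leq X}\chi_d(n_1 n_2 \cdots n_k).$$
For an integer $m\geq 1$, the inner character sum admits a well-known evaluation: its main term vanishes unless $m$ is a perfect square, and otherwise
$$\sideset{}{^*}{\sum}_{0<d\leq X}\chi_d(m) = \frac{X}{\zeta(2)}\,g(m) + E(X,m),$$
where $g$ is the multiplicative function with $g(1)=1$ and $g(p^{2a})=p/(p+1)$ for $a\geq 1$, while $E(X,m)\ll m^{1/4+\varepsilon}X^{1/2}$ follows from a Polya--Vinogradov bound applied to the squarefree kernel of $m$. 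Summed over $(n_1,\dots,n_k)$ with each $n_j\leq Y$, the contribution of $E$ is at most $Y^{5k/4+\varepsilon}X^{1/2}$, which is absorbed by the expected main term $XY^{k/2}(\log Y)^{k(k-1)/2}$ precisely in the range $Y\ll X^{2/(3k)}(\log X)^{(2/3)(k-1)-4/(3k)}$, once the sharp logarithmic saving from the diagonal is tracked.

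The diagonal contribution equals $\frac{X}{\zeta(2)}\,T_k(Y)$ with
$$T_k(Y):=\sum_{\substack{n_1,\dots,n_k\leq Y\\ n_1\cdots n_k=\square}} g(n_1\cdots n_k),$$
so the entire analytic task is the asymptotic of $T_k(Y)$. I attach to it the multivariable Dirichlet series
$$F_k(s_1,\dots,s_k)=\sum_{\substack{n_1,\dots,n_k\geq 1\\ n_1\cdots n_k=\square}}\frac{g(n_1\cdots n_k)}{n_1^{s_1}\cdots n_k^{s_k}}.$$
Since the condition ``product is a square'' is multiplicative, $F_k$ has an Euler product, and a direct local computation (splitting $(a_1,\dots,a_k)$ by the parity of $\sum a_j$) yields the factorisation
$$F_k(\bm{s}) = \prod_{1\leq i<j\leq k}\zeta(s_i+s_j)\cdot G_k(\bm{s}),$$
with $G_k$ holomorphic and bounded in a product of half-planes $\mathrm{Re}(s_j)>\tfrac{1}{2}-\delta$. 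Evaluating $G_k(\tfrac{1}{2},\dots,\tfrac{1}{2})$ factor by factor and absorbing the compensating local factor $(1-1/p)^{k(k-1)/2}$ coming from $\prod_{i<j}\zeta(s_i+s_j)^{-1}$ recovers exactly the Euler product defining $c_k$.

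Finally, I extract $T_k(Y)$ by multivariate Perron inversion combined with a tauberian theorem of de la Bret\`eche type. The polar divisor of $F_k$ consists of the $r=k(k-1)/2$ hyperplanes $\{s_i+s_j=1\}$, all meeting at the critical point $(\tfrac{1}{2},\dots,\tfrac{1}{2})$; each pair contributes one simple pole, so the singularity has total order $r$ and produces a logarithmic factor of the sharp expected size $(\log Y)^{k(k-1)/2}$. After the rescaling $s_j=\tfrac{1}{2}+\eta_j/\log Y$, the residue reduces to an integral over the polytope defined by $\sum_{i=1}^{r} h^{(i)}(\bm{e}_j)u_i\leq 1$ for each $1\leq j\leq k$, whose Lebesgue volume is precisely $\gamma_k$. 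The main technical obstacle is twofold: first, one must control $E(X,m)$ uniformly for $m$ up to $Y^k$, which is what ultimately forces the range $Y\ll X^{2/(3k)}$ up to logarithmic factors; second, one must establish polynomial-type bounds for $G_k$ on vertical strips so that the tauberian step actually delivers the relative error $O((\log Y)^{-1})$ rather than only a weaker saving.
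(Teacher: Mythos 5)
Your overall architecture is exactly the paper's: expand the $k$-th power, apply the orthogonality relation for $\sideset{}{^*}\sum_{d\le X}\chi_d(m)$ to isolate the diagonal $n_1\cdots n_k=\square$, bound the off-diagonal by P\'olya--Vinogradov to get the error $X^{1/2}Y^{5k/4+\varepsilon}$ (whence the range of $Y$), and evaluate the diagonal sum via de la Bret\`eche's multivariate tauberian theorem, ending with the polytope volume $\gamma_k$. However, there is a concrete error in your factorisation step that would derail the proof as written. The local Euler factor of $F_k$ contains, besides the off-diagonal terms $p^{-s_i-s_j}$, the diagonal terms $p^{-2s_j}$ (coming from $v_j=2$, all other exponents zero, which certainly satisfy the parity condition). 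Consequently the correct factorisation is $F_k(\bm{s})=\prod_{j}\zeta(2s_j)\prod_{i<j}\zeta(s_i+s_j)\cdot E_k(\bm{s})$ with $E_k$ holomorphic near $\mathrm{Re}(s_j)=1/2$; your $G_k=\prod_j\zeta(2s_j)E_k$ is \emph{not} holomorphic and bounded for $\mathrm{Re}(s_j)>\tfrac12-\delta$, since it has poles along $s_j=\tfrac12$.

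This omission is not cosmetic. First, in de la Bret\`eche's theorem the degree of the log-polynomial is (number of polar linear forms) minus (their rank); with only the $r=k(k-1)/2$ forms $s_i+s_j$ you would get degree $r-k$ for $k\ge 3$ and degree $0$ for $k=2$, whereas the correct count uses all $k+r$ forms of rank $k$ and gives degree $r$. (Check $k=2$ directly: $\sum_{n_1,n_2\le Y,\ n_1n_2=\square}1\asymp Y\log Y$, so the single hyperplane $s_1+s_2=1$ alone cannot account for the logarithm.) Your heuristic that ``total order $r$ produces $(\log Y)^{r}$'' lands on the right exponent only by accident. Second, the constant is affected: $c_k$ carries the local factor $(1-1/p)^{k(k+1)/2}$, where the exponent $k(k+1)/2=k+\binom{k}{2}$ reflects all $k+r$ zeta factors; your ``compensating factor $(1-1/p)^{k(k-1)/2}$'' would leave a divergent discrepancy $\prod_p(1-1/p)^{-k}$. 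Finally, the polytope $\mathcal{A}_k$ in $r$ variables is obtained in the paper only after explicitly integrating out the $k$ variables attached to the forms $2s_j$; this step (and the verification of hypothesis (C2) of the tauberian theorem, plus the proof that the lower-order terms of that integration contribute $O((\log Y)^{r-1})$) is where most of the actual work lies, and it is absent from your outline.
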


\begin{remark}
    The geometric constant $\gamma_k$ can be computed for small values of $k$ (for instance $\gamma_2=1, \gamma_3=1/4$). However, we have not been able to give a closed formula for all values of $k$. This could be compared with the asymptotic of moments of Steinhaus random multiplicative functions proved in \cite[Theorem $3$]{HNR} where the volume of the famous Birkhoff polytope is involved in the geometric constant. 
\end{remark}



Assuming GRH, we provide sharp upper bounds on smoothed character sums, improving the results of \cite{GZ,GZ2} for positive integers $k$. For the sake of simplicity, we consider the smoothed moments:

$$S_k(X,Y,W):= \sideset{}{^*}{\sum}_{0<d \leq X} \left\vert \sum_{ n\geq 1} \chi_d(n)W(n/Y) \right\vert^k $$ where $W$ is any non-negative, smooth function compactly supported on the set of positive real numbers. Then, we have the following result.
\begin{theorem}\label{th_smooth}
Assume GRH. For any integer $k\geq 2$ and large $X,Y$ we have
\begin{equation*}S_k(X,Y,W) \ll XY^{k/2} (\log X)^{\frac{k(k-1)}{2 }}.\end{equation*}
\end{theorem}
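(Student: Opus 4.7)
The plan is to prove Theorem~\ref{th_smooth} via Mellin inversion combined with a conditional shifted moment bound for quadratic $L$-functions under GRH. Let $A_d := \sum_{n\geq 1}\chi_d(n)W(n/Y)$, and let $\widetilde{W}(s)=\int_0^\infty W(x)x^{s-1}dx$ denote the Mellin transform of $W$, which is entire and of super-polynomial decay in any vertical strip. Then
$$A_d = \frac{1}{2\pi i}\int_{(c)} L(s,\chi_d)\widetilde{W}(s)Y^s\,ds\qquad(c>1).$$
Since $L(s,\chi_d)$ is entire for $d$ a non-trivial fundamental discriminant, I would shift the contour to the line $\re(s)=\tfrac12+\eta$, with $\eta := 1/\log Y$.

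For even integer $k$ (the principal case), $A_d\in\mathbb{R}$ yields $|A_d|^k=A_d^k$. Multiplying $k$ copies of the Mellin representation and interchanging with the sum over $d$ gives
$$S_k(X,Y,W)=\frac{1}{(2\pi i)^k}\int\cdots\int\prod_{j=1}^k\widetilde{W}(s_j)Y^{s_j}\,F_k(\mathbf{s})\,ds_1\cdots ds_k,$$
where $F_k(\mathbf{s})=\sideset{}{^*}{\sum}_{0<d\leq X}\prod_{j=1}^kL(s_j,\chi_d)$. The crucial input, available on GRH from recent work on shifted moments in the symplectic family (e.g.\ Soundararajan, Chandee, Gao--Zhao), is an estimate of the form
$$|F_k(\mathbf{s})|\ll X\prod_{1\leq i\leq j\leq k}|\zeta(s_i+s_j)|,$$
which on our contour $s_j=\tfrac12+\eta+it_j$ reduces to $\ll X\prod_{i\leq j}\min\bigl(\log X,\,1/(|t_i+t_j|+\eta)\bigr)$.

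To extract the correct exponent $k(k-1)/2$, I would analyze the resulting $k$-dimensional integral. The dominant contribution arises from the saturated region $|t_j|\ll\eta$ for every $j$: there each of the $\binom{k+1}{2}$ factors attains size $\asymp\log X$, so the integrand is of order $(\log X)^{k(k+1)/2}$, while the volume of this region is $\asymp\eta^k=(\log X)^{-k}$, yielding $(\log X)^{k(k-1)/2}$. The complementary regions are handled by a combinatorial decomposition according to which subsets of the singular surfaces $\{t_i+t_j=0\}$ are nearly saturated, each subregion being controlled via $\int\min(\log X,1/|u|)du\ll\log\log X$ together with the rapid decay of $\widetilde W$; each subregion contributes at most the same order. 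Combined with $Y^{k/2+k\eta}=O(Y^{k/2})$, this produces $S_k(X,Y,W)\ll XY^{k/2}(\log X)^{k(k-1)/2}$.

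The main obstacle I anticipate is procuring the shifted moment estimate in precisely the symplectic form above---with the product over pairs $i\leq j$ and sharp dependence near the pole of $\zeta$ at $s=1$---and then organizing the integral analysis so that the exponent $k(k-1)/2$ emerges exactly, without the extraneous factor $\log X$ present in \eqref{upperGZ} for real $k>1+\sqrt{5}$. For odd integer $k$, the naive Cauchy--Schwarz reduction between the even $(k\pm 1)$-moments overshoots the target exponent by $\tfrac12$ in the log-power; this case requires a more delicate argument, either an extension of the Mellin framework incorporating a sign decomposition of $A_d$, or a Soundararajan-style upper bound on $\log|A_d|$ under GRH.
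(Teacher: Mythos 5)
Your overall strategy---Mellin inversion to reduce $S_k(X,Y,W)$ to an average of shifted values of $L(\tfrac12+it,\chi_d)$ on the critical line, followed by an analysis of the resulting $k$-fold integral in the shifts---is the same as the paper's. But there are two genuine gaps. First, the ``crucial input'' you invoke, $|F_k(\mathbf{s})|\ll X\prod_{1\leq i\leq j\leq k}|\zeta(s_i+s_j)|$, is not what GRH is known to deliver. What is available (Proposition \ref{th_shifted}, i.e.\ Gao--Zhao) is a bound on $\sideset{}{^*}{\sum}_{d\leq X}\prod_j|L(\tfrac12+it_j,\chi_d)|$ of the shape $X(\log X)^{k/4}\prod_{i<j}g(|t_i-t_j|)^{1/2}g(|t_i+t_j|)^{1/2}\prod_i g(|2t_i|)^{3/4}$ with $g$ as in \eqref{gDef}: the Soundararajan--Harper method, applied to $\log|L|$, unavoidably produces correlation factors at the \emph{differences} $t_i-t_j$ as well as at the sums, since the coefficients are real. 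Your region analysis is calibrated to the purely ``diagonal'' majorant $\prod_{i\leq j}\min(\log X,1/(|t_i+t_j|+\eta))$; with the true input the singular locus also contains the hyperplanes $t_i=t_j$, which pass through your saturated region, and the bookkeeping has to be redone. (The total logarithm count still closes---in the region where all $|t_j|\leq 1/\log X$ one gets $(\log X)^{k/4+k(k-1)/2+3k/4}$ against a volume $(\log X)^{-k}$---but this must be verified for all the off-diagonal regimes; the paper does this by induction on $k$, peeling off $t_1$ with a pointwise majorant and invoking the $(k-1)$-variable bound, rather than by your combinatorial saturation decomposition. See Proposition \ref{bound_intshift}.)

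Second, the theorem is claimed for \emph{every} integer $k\geq 2$, and your reduction $|A_d|^k=A_d^k$ only covers even $k$; you acknowledge the odd case is unresolved, and neither of your suggested fixes is carried out. The paper avoids the parity issue entirely by putting the absolute value \emph{inside} the Mellin integral before raising to the $k$-th power: after truncation one bounds $|A_d|\ll Y^{1/2}\int_{|t|\leq X^{\varepsilon}}|L(\tfrac12+it,\chi_d)\widehat{W}(\tfrac12+it)|\,dt$ and then takes $k$-th powers, which is legitimate for any $k$ and, crucially, turns the object to be averaged over $d$ into exactly the product of absolute values that Proposition \ref{th_shifted} controls. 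That single change of order repairs both gaps at once: it removes the parity restriction and replaces your unavailable estimate on the analytic sum $F_k(\mathbf{s})$ by the available one on the sum of $|L|$-products; the price---the extra $g(|t_i-t_j|)$ factors---is then paid in the integral analysis (Lemma \ref{lem_intbound} and Proposition \ref{bound_intshift}).
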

 \begin{remark}\label{remark} Note that unconditionally, for any reals $k>0$ and $Y \geq X$, by \cite[Theorem $1$]{Armon},  we have 
$$S_k(X,Y) \ll X^{1+k/2} \ll XY^{k/2} (\log X)^{\frac{k(k-1)}{2}}.$$ 
Theorem \ref{th_smooth} mproves on \eqref{upperGZ} and is optimal in view of Theorem \ref{lowquad}. Moreover, it is in accordance with the case $k=2$ which was unconditionally proved by Armon \cite[Theorem $2$]{Armon}. Note that \eqref{upperGZ} gives a weaker exponent of the logarithm but is valid for real values of $k$. In order to avoid too much technicality, we focus on the case of integer moments. 
 \end{remark}

For an even primitive Dirichlet character $\chi$ modulo $f$, let 
$$\theta(t,\chi):=\sum_{n\geq 1} \chi(n)e^{-\frac{\pi n^2 t}{f}} \qquad (t>0)$$ be its associated theta function. The study of moments of $\theta(1,\chi)$ averaged over the unitary family of primitive characters modulo a fixed prime $q$ has been extensively studied in connection with a conjecture of Louboutin \cite{Lou} about non-vanishing of $\theta(1,\chi)$  (see results in \cite{dlbMT,LM1} towards this conjecture). The behavior of moments is similar to the case of character sums of length $\approx \sqrt{q}$.  For a real $k\geq 2$, the following bounds hold.
\begin{equation}\label{thetaXq} X^{1+k/2} (\log X)^{(k-1)^2} \ll \sum_{\chi \bmod q \atop \chi(-1)=1} \left \vert \theta(1,\chi)\right\vert^{2k} \ll X^{1+k/2} (\log X)^{(k-1)^2} \end{equation} where the lower bound \cite{thetalow,SZ2} is unconditional and the upper bound \cite{Munschtheta,SZ1} requires the assumption of GRH. We pursue this analogy in the case of theta functions averaged over fundamental discriminants. For $k>0$ we consider the moments of real theta functions:
$$ \sideset{}{^*}{\sum}_{0<d\leq X} \vert \theta(1,\chi_d)\vert^{k}.$$ 
We prove unconditional lower bounds of the right order of magnitude which may be compared to the ones for $L$-functions of Rudnick and Soundararajan \cite{RS1,RS2} and to the ones obtained by the first named author and Shparlinski \cite{thetalow}. 
\begin{theorem}\label{lowtheta} For any fixed even integer $k\geq 2$, we have $$ \sideset{}{^*}{\sum}_{0<d\leq X} \vert \theta(1,\chi_d)\vert^{k} \gg X^{1+k/4}(\log X)^{\frac{k(k-1)}{2}}.  $$ 
\end{theorem}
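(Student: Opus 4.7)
The plan is to mimic the proof strategy of Theorem \ref{lowquad} with the Gaussian weight coming from the theta function in place of the sharp cutoff. Since $\theta(1,\chi_d)=\sum_{n\geq 1}\chi_d(n)e^{-\pi n^2/|d|}$ behaves like a smoothed quadratic character sum of length $\asymp\sqrt{|d|}$, for $d\asymp X$ the natural effective length is $Y\asymp\sqrt{X}$, which lies comfortably inside the admissible range $X^\varepsilon\ll Y\ll X^{2/3-\alpha}$ of Theorem \ref{lowquad}. The predicted order $X\cdot Y^{k/2}(\log X)^{k(k-1)/2}$ with $Y=\sqrt{X}$ matches the claimed $X^{1+k/4}(\log X)^{k(k-1)/2}$.

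Since $k$ is even and $\chi_d$ is real, $|\theta(1,\chi_d)|^k=\theta(1,\chi_d)^k\geq 0$, so restricting to a dyadic range only decreases the sum. Expanding and swapping summations,
\begin{align*}
\sideset{}{^*}{\sum}_{X/2<d\leq X}\theta(1,\chi_d)^k=\sum_{n_1,\dots,n_k\geq 1}\sideset{}{^*}{\sum}_{X/2<d\leq X}\chi_d(n_1\cdots n_k)\exp\!\Bigl(-\frac{\pi}{|d|}\sum_{i=1}^k n_i^2\Bigr).
\end{align*}
I would split the outer sum according to whether $m:=n_1\cdots n_k$ is a perfect square. On the diagonal ($m=\square$), $\chi_d(m)=\mathbf{1}_{(d,m)=1}$, the Gaussian weight is bounded below by a positive absolute constant whenever $n_i=O(\sqrt{X})$ and $d\asymp X$, and the number of fundamental discriminants coprime to $m$ in $[X/2,X]$ is $\gg X$, so the diagonal contribution is
\begin{align*}
\gg X\sum_{n_1\cdots n_k=\square}\exp\!\Bigl(-\frac{2\pi}{X}\sum_i n_i^2\Bigr).
\end{align*}

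I would evaluate this inner sum by multivariate Mellin inversion. Its generating Dirichlet series admits the Euler product
\begin{align*}
D_k(s_1,\dots,s_k)=\prod_p\tfrac{1}{2}\Bigl[\prod_{i=1}^k(1-p^{-s_i})^{-1}+\prod_{i=1}^k(1+p^{-s_i})^{-1}\Bigr]=\prod_{i<j}\zeta(s_i+s_j)\prod_i\zeta(2s_i)\,H(\bm s),
\end{align*}
with $H$ holomorphic near $(1/2,\dots,1/2)$, while the Mellin transform of $e^{-\pi t^2/X}$ contributes $\tfrac{1}{2}\Gamma(s_i/2)\pi^{-s_i/2}X^{s_i/2}$ per variable. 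A Tauberian/residue computation of the same flavour as in the proof of Theorem \ref{asymptotic for S_k(X,Y)} — the factor $X^{s_i/2}$ evaluated at $s_i=1/2$ producing $X^{k/4}$, while the $\binom{k}{2}$ active hyperplanes $s_i+s_j=1$ yield the logarithm after rescaling $s_i=\tfrac{1}{2}+u_i/\log X$ — then gives the lower bound $\gg X^{k/4}(\log X)^{k(k-1)/2}$ for the inner sum. Combined with the prefactor $X$ this produces the claimed bound.

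It remains to show that the off-diagonal contribution, indexed by $m=\ell^2 s$ with $s>1$ squarefree, is strictly smaller. After truncating at $n_i\leq\sqrt{X}\log X$ (the Gaussian tail beyond is super-polynomially small) and subdividing the $d$-range finely enough that the Gaussian weight is essentially constant in $d$, I would apply Poisson summation over fundamental discriminants and Heath-Brown's quadratic large sieve to bound the resulting sums $\sideset{}{^*}{\sum}_{d\asymp X}\chi_d(s)$, gaining a power saving for $s>1$ as long as the sum length $\sqrt{X}$ lies below $X^{2/3-\alpha}$, which is amply satisfied. The main obstacle is precisely this off-diagonal control: the $d$-dependence of the Gaussian kernel must be disentangled before the quadratic Poisson formula applies, and the large-sieve bounds have to be combined uniformly over $k$-tuples of $n_i$ up to length $\sqrt{X}\log X$ in order to stay strictly below the diagonal main term.
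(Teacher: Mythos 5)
Your opening sentence promises to mimic the proof of Theorem \ref{lowquad}, but what you actually execute is a different, and unfortunately non-viable, argument: a direct expansion of $\theta(1,\chi_d)^k$ with all $k$ variables of length $\asymp\sqrt{X}$. The fatal gap is the off-diagonal. After freezing the Gaussian weight on short $d$-ranges you must bound
\begin{equation*}
\sum_{\substack{m\leq X^{k/2}\\ m\neq\square}}a_m\ \sideset{}{^*}{\sum}_{d\asymp X}\chi_d(m),\qquad a_m\leq \#\{(n_1,\dots,n_k):n_1\cdots n_k=m\},
\end{equation*}
and neither of your proposed tools suffices. Poisson summation over $d$ (equivalently P\'olya--Vinogradov, i.e.\ Lemma \ref{sqrfree}) gives $X^{1/2}m^{1/4+\delta}$ per term, hence $X^{1/2+5k/8+o(1)}$ in total, far above the target $X^{1+k/4}$ for every $k\geq 2$. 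Heath-Brown's quadratic large sieve, applied in dual form after Cauchy--Schwarz in $m$, gives roughly $\big(\sum_m a_m^2\big)^{1/2}\big((X^{k/2}+X)X\big)^{1/2}X^{\varepsilon}\ll X^{k/2+1/2+\varepsilon}$, which exceeds $X^{1+k/4}$ for $k>2$ and fails by the $X^{\varepsilon}$ even at $k=2$ against a main term of size $X^{3/2}\log X$. Controlling this off-diagonal with all $k$ sums of length $\sqrt{X}$ is essentially the difficulty of Jutila's conjecture itself, open beyond $k=2$ (and $k=4$ up to $X^{\varepsilon}$); a lower-bound argument should not need it.

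The paper avoids this entirely via the Rudnick--Soundararajan device, which is also how Theorem \ref{lowquad} is actually proved: one forms $\mathcal{S}_1=\sideset{}{^*}{\sum}_{X/2<d\leq X}\theta(1,\chi_d)M_{\varepsilon,X}(\chi_d)^{k-1}$ with $k-1$ auxiliary character sums of length only $X^{\varepsilon}$, applies H\"older against $\mathcal{S}_2=\sideset{}{^*}{\sum}_{X/2<d\leq X}|M_{\varepsilon,X}(\chi_d)|^{k}$, truncates the theta series at $d^{7/12}$ (Lemma \ref{lem:approx}), and then the off-diagonal variables have product at most $X^{7/12+(k-1)\varepsilon}$, so the elementary bound of Lemma \ref{sqrfree} already yields $\mathcal{S}_1^{nsq}\ll X^{5/4-\alpha}$. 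Your diagonal analysis is essentially sound and matches Lemma \ref{count_squares}, with one caveat: the number of fundamental discriminants $d\leq X$ coprime to $m$ is $\asymp X\prod_{p\mid m}(1+1/p)^{-1}$, which is not uniformly $\gg X$, so the arithmetic weight must be carried inside the square-count as the paper does. Replacing your direct expansion by the H\"older step turns the rest of your outline into the paper's argument.
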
 
Moreover, under GRH, we can argue as in the proof of Theorem \ref{th_smooth} and obtain sharp upper bounds similar to the results of Szab\'{o} \cite{SZ1}.
\begin{theorem}\label{th_theta}
Assume GRH. For any fixed integer $k\geq 2$ and large $X,Y$ we have
\begin{equation*} \sideset{}{^*}{\sum}_{0<d\leq X} \vert \theta(1,\chi_d)\vert^k \ll X^{1+k/4} (\log X)^{\frac{k(k-1)}{2 }}.\end{equation*}
\end{theorem}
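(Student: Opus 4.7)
The plan is to adapt the argument of Theorem \ref{th_smooth} to the setting of theta functions, exploiting the fact that $\theta(1,\chi_d)$ is a quadratic character sum smoothed by the Gaussian weight $e^{-\pi n^{2}/d}$, whose Mellin transform is the classical archimedean factor $\Gamma(s/2)(d/\pi)^{s/2}$. Concretely, for $c>1$ one has the identity
\begin{equation*}
\theta(1,\chi_{d}) = \frac{1}{2\pi i}\int_{(c)}\Gamma(s/2)\Big(\frac{d}{\pi}\Big)^{s/2} L(s,\chi_{d})\, ds,
\end{equation*}
obtained from $e^{-\pi n^{2}/d}=\frac{1}{2\pi i}\int_{(c)}\Gamma(s/2)(\pi n^{2}/d)^{-s/2}\,ds$ after interchanging sum and integral. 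Since $\chi_{d}$ is non-principal, $L(s,\chi_{d})$ is entire, and the rapid decay of $\Gamma$ on vertical lines allows one to shift the contour to the critical line $\re(s)=1/2$ without crossing any pole.

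Raising to the $k$-th power on the critical line, bounding in absolute value, summing over fundamental discriminants $0<d\leq X$, and interchanging the sum with the resulting $k$-fold integral leads to
\begin{equation*}
\sideset{}{^*}{\sum}_{0<d\leq X}\lvert\theta(1,\chi_{d})\rvert^{k} \ll X^{k/4}\int_{\mathbb{R}^{k}}\prod_{j=1}^{k}\lvert\Gamma(1/4+it_{j}/2)\rvert\cdot \Sigma(\mathbf{t};X)\, dt_{1}\cdots dt_{k},
\end{equation*}
where $\Sigma(\mathbf{t};X):=\sideset{}{^*}{\sum}_{0<d\leq X}\prod_{j=1}^{k}\lvert L(1/2+it_{j},\chi_{d})\rvert$ is a shifted joint moment of quadratic $L$-values. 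The problem is thereby reduced, exactly as in the character-sum situation of Theorem \ref{th_smooth}, to controlling $\Sigma(\mathbf{t};X)$ uniformly in the shift vector $\mathbf{t}$.

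The core input is then the GRH-conditional shifted moment bound that already underlies Theorem \ref{th_smooth}, itself a refinement of the Soundararajan--Harper method as employed by Szab\'{o} and the first named author. It yields an estimate of the shape $\Sigma(\mathbf{t};X)\ll X(\log X)^{k(k-1)/2}\,\mathcal{R}(\mathbf{t})$, where $\mathcal{R}(\mathbf{t})$ grows only polynomially in the $\lvert t_{j}\rvert$. Against this, the Gamma-factors supply the super-polynomial decay $\lvert\Gamma(1/4+it/2)\rvert\ll\exp(-\pi\lvert t\rvert/4)$, so the multivariate integral converges absolutely and contributes only a bounded constant, producing the asserted upper bound $X^{1+k/4}(\log X)^{k(k-1)/2}$.

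The main obstacle, as in the character-sum setting, is arranging a shifted moment estimate truly uniform in $\mathbf{t}$, most delicately near the diagonal $t_{i}\approx t_{j}$, where a naive product of individual bounds degenerates and one risks losing the sharp exponent $k(k-1)/2$. I would handle this by partitioning the $\mathbf{t}$-integral according to the pairwise separations $\lvert t_{i}-t_{j}\rvert$: on regions where the shifts are well separated, one extracts the full product structure and the correct log-power; on regions where some shifts cluster, one groups them and replaces the corresponding factors by a single higher-moment bound via H\"older's inequality, at the cost of only a bounded factor absorbed by the rapid decay of the $\Gamma$-weights. Summing these contributions matches, mutatis mutandis, the analysis carried out for Theorem \ref{th_smooth} and delivers the claimed bound.
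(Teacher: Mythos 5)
Your overall architecture is exactly the paper's: express $\theta(1,\chi_d)$ by Mellin inversion as an integral of $L(\tfrac12+it,\chi_d)$ against a rapidly decaying Gamma factor, raise to the $k$-th power, sum over $d$, and reduce to an averaged shifted moment of quadratic $L$-functions, truncating the $t$-integral using the decay of $\Gamma$ on vertical lines. The gap is in the one step that carries all the difficulty, namely the integration over the shifts. First, the input you invoke does not exist in the form you state: the GRH shifted-moment bound actually available (Proposition \ref{th_shifted}, from Gao--Zhao) is not $\Sigma(\mathbf{t};X)\ll X(\log X)^{k(k-1)/2}\mathcal{R}(\mathbf{t})$ with $\mathcal{R}$ polynomial in the $|t_j|$; it is $X(\log X)^{k/4}\prod_{i<j}g(|t_i-t_j|)^{1/2}g(|t_i+t_j|)^{1/2}\prod_i g(|2t_i|)^{3/4}$ with each $g$-factor as large as $\log X$, so near $\mathbf{t}=\mathbf{0}$ the pointwise bound degenerates to $X(\log X)^{k(k-1)/2+k}$, a full $(\log X)^k$ worse than the target. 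Second, your proposed fix for the clustered regions is not viable as described: the loss from clustering is a power of $\log X$ in the integrand, and the $\Gamma$-weights decay in $|t_j|$, not in the separations $|t_i-t_j|$, so they cannot absorb it; the compensation must come from the small measure of the clustered region, which requires a genuine quantitative argument.

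That argument is precisely the paper's main technical contribution (Proposition \ref{bound_intshift}, which is what improves the exponent in \eqref{upperGZ} from $\tfrac{k(k-1)}{2}+1$ down to $\tfrac{k(k-1)}{2}$). The paper proves it by induction on $k$: the factors of the integrand involving $t_1$ are majorized by an explicit function $h_{k,E}(t_1)$ of $t_1$ alone, defined piecewise on $t_1\le 1/\log X$, $1/\log X\le t_1\le 10$ and $t_1\ge 10$; this is pulled out, the induction hypothesis is applied to the remaining $(k-1)$-variable integral, and $\int_0^E h_{k,E}$ is computed directly. The base case $k=2$ needs a separate two-dimensional estimate (Lemma \ref{intreal}) for the region $1/\log X\le t_1,t_2\le 10$. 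Your sketch identifies the right obstacle but does not supply this mechanism, so as written the proof is incomplete at its decisive step.
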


\begin{remark} This is optimal in view of Theorem \ref{lowtheta}.  It is worth mentioning that for \textit{low} moments ($k\leq 2$) over even characters modulo $q$, Harper \cite{H4} recently obtained  conjecturally sharp upper bounds. The problem of proving matching lower bounds remains open and only partial results are known \cite{dlbMT}.\end{remark}
We unconditionally obtain the order of magnitude for the second moment of theta functions. This should be compared to the aforementioned result of Armon \cite[Theorem $2$]{Armon} in the case of character sums. It was previously known by \cite[Theorem $2$]{LM} and \cite{Munsch} that $$\sideset{}{^*}{\sum}_{0<d\leq X} \vert \theta(1,\chi_d)\vert^2 \ll X^{3/2} (\log X)^{2}.$$ 

\begin{theorem}\label{theta2nd} For $X$ large, we have
$$\sideset{}{^*}{\sum}_{0<d\leq X} \vert \theta(1,\chi_d)\vert^2 \asymp X^{3/2}(\log X).  $$ 
\end{theorem}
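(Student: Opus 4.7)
The lower bound $\gg X^{3/2}\log X$ is the $k=2$ case of Theorem~\ref{lowtheta}. For the matching upper bound, which improves the bound $\ll X^{3/2}(\log X)^2$ of \cite{LM,Munsch} by a single logarithm, the plan is to open the square and interchange summations,
\[
\sum_{0<d\leq X}^{*}|\theta(1,\chi_d)|^2 = \sum_{m,n\geq 1}\sum_{0<d\leq X}^{*}\chi_d(mn)\,e^{-\pi(m^2+n^2)/|d|},
\]
and then split the $(m,n)$-sum into a \emph{diagonal} part, where $mn$ is a perfect square, and an \emph{off-diagonal} part.

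For the diagonal, every pair $(m,n)$ with $mn$ square factors uniquely as $(au^2,av^2)$ with $a$ squarefree (the common squarefree part of $m,n$) and $u,v\geq 1$, so that $\chi_d(mn)=\mathds{1}_{(d,auv)=1}$. The diagonal contribution then becomes
\[
\sum_{a\text{ sqfree}}\sum_{u,v\geq 1}\sum_{\substack{0<d\leq X\\(d,2auv)=1}}^{*}e^{-\pi a^2(u^4+v^4)/|d|}.
\]
The Gaussian weight restricts the $(a,u,v)$-sum to $a^2(u^4+v^4)\ll X$. On this range the inner $d$-sum has size $\asymp X$ (by positive density of fundamental discriminants, since the weight is $\asymp 1$), the $(u,v)$-sum is comparable to a planar Gaussian integral contributing $\asymp X^{1/2}/a$, and summation over squarefree $a\leq\sqrt{X}$ via $\sum_{a\leq Y}\mu^2(a)/a\sim(6/\pi^2)\log Y$ produces the desired $\asymp X^{3/2}\log X$.

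For the off-diagonal, write $mn=k_1 k_2^2$ with $k_1>1$ squarefree, so that $\chi_d(mn)=\chi_d(k_1)$ is a non-principal character of $d$ modulo $4k_1$. I would detect the fundamental discriminant condition with a standard M\"obius sieve and apply Poisson summation in $d$ to each smoothed inner sum $\sum_{d\leq X}^{*}\chi_d(k_1)e^{-\pi(m^2+n^2)/|d|}$. The resulting dual sum is controlled by Gauss sums attached to $(\cdot/k_1)$ together with the rapid decay (in $d$-frequency) of the smooth Gaussian weight, yielding a bound of shape $O_\varepsilon(X^{\varepsilon}(k_1^{1/2}+X^{1/2}))$. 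Summing over pairs $(m,n)$ with $m^2+n^2\ll X\log X$ (the effective range of the Gaussian) and using divisor bounds for the factorizations $mn=k$, the off-diagonal totals $O(X^{3/2-\delta})$ for some $\delta>0$, absorbed by the main term.

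The principal obstacle is to keep the off-diagonal strictly below $X^{3/2}\log X$: a black-box application of Heath-Brown's quadratic large sieve would cost a factor $X^{\varepsilon}$, and the naive integral-representation approach $|\theta(1,\chi_d)|^2\leq\int_0^\infty\bigl|\sum_{n\leq s}\chi_d(n)\bigr|^2\,\frac{2\pi s}{|d|}e^{-\pi s^2/|d|}\,ds$ combined with Armon's second-moment bound reproduces exactly the $(\log X)^2$ that one is trying to save. Direct Poisson summation in $d$, exploiting the smoothness of the $d$-dependent Gaussian weight, is what isolates the sharp single $\log X$ saving and recovers the matching upper bound.
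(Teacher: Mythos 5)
Your overall decomposition matches the paper's: the lower bound is indeed the $k=2$ case of Theorem~\ref{lowtheta}, and the upper bound proceeds by opening the square, interchanging summations, and splitting according to whether $mn$ is a perfect square. Your diagonal computation (parametrising $mn=\square$ by $(m,n)=(au^2,av^2)$ with $a$ squarefree and summing the Gaussian weights) is a correct direct version of what the paper simply imports from \cite[Lemma~21]{LM}, and it does give $\asymp X^{3/2}\log X$.

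The off-diagonal is where there is a genuine gap. You state a per-pair bound of shape $O_\varepsilon\bigl(X^{\varepsilon}(k_1^{1/2}+X^{1/2})\bigr)$ and conclude that the off-diagonal totals $O(X^{3/2-\delta})$, but the arithmetic does not close. For typical pairs with $m,n\asymp\sqrt{X}$ the product $mn$ is essentially squarefree, so $k_1\asymp X$, the per-pair bound is $X^{1/2+\varepsilon}$, there are $\asymp X$ such pairs, and the Gaussian weight on them is $\asymp 1$; taking absolute values therefore yields $X^{3/2+\varepsilon}$, which exceeds the target $X^{3/2}\log X$, let alone $X^{3/2-\delta}$. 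A genuine power saving would require cancellation over the $(m,n)$-average (e.g.\ Heath-Brown's quadratic large sieve), which you explicitly set aside because of its $X^{\varepsilon}$ loss; and the Poisson step itself is delicate here, since the cutoff $d\leq X$ is sharp and the fundamental-discriminant condition must be sieved, so the dual sum essentially only recovers the P\'olya--Vinogradov bound in any case. The repair --- and the paper's actual route --- is to not seek a power saving at all: bound $\sum_{d\leq x}\chi_{mn}(d)\ll\sqrt{mn}\log(mn)$ by P\'olya--Vinogradov, feed in the Gaussian weight by partial summation (Lemma~\ref{lem:weight}) over dyadic ranges of $d$, and then sum over $m,n$ exploiting the exponential decay, $\sum_m m^{1/2}e^{-\pi m^2 2^{k}/X}\asymp (X/2^k)^{3/4}$. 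This gives $T_2^{nsq}\ll X^{3/2}\log X$, the same order as the diagonal, which is all that an upper bound of the claimed order requires.
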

A standard application of Cauchy-Schwarz inequality leads to the following non-vanishing result improving \cite[Theorem $2$]{LM}.
\begin{corollary}
There exist at least $\gg X/\log X$ fundamental discriminants $d\leq X$ such that $\theta(1,\chi_d) \neq 0$.
\end{corollary}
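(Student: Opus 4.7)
The plan is to combine Theorem~\ref{theta2nd} with the standard first-moment / second-moment Cauchy--Schwarz trick for non-vanishing. Let
\[
N(X):=\#\bigl\{\,0<d\leq X\ \text{fundamental discriminant}: \theta(1,\chi_d)\neq 0\,\bigr\}.
\]
Since $\chi_d$ is real, $\theta(1,\chi_d)\in\mathbb{R}$; restricting the first moment to the non-vanishing support and applying Cauchy--Schwarz yields
\[
\left|\sideset{}{^*}{\sum}_{0<d\leq X}\theta(1,\chi_d)\right|^{2} \leq N(X)\cdot \sideset{}{^*}{\sum}_{0<d\leq X}\bigl|\theta(1,\chi_d)\bigr|^{2}.
\]
The right-hand side is $\ll X^{3/2}\log X$ by Theorem~\ref{theta2nd}; this saving of a full $\log X$ over the $X^{3/2}(\log X)^{2}$ bound used in \cite[Theorem~2]{LM} is precisely the source of the improved count. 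Hence it suffices to prove the first-moment lower bound
\[
\sideset{}{^*}{\sum}_{0<d\leq X}\theta(1,\chi_d) \gg X^{5/4}.
\]

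For the first moment, I interchange the order of summation,
\[
\sideset{}{^*}{\sum}_{0<d\leq X}\theta(1,\chi_d) = \sum_{n\geq 1}\sideset{}{^*}{\sum}_{0<d\leq X}\chi_d(n)\,e^{-\pi n^{2}/d},
\]
and split $n=a^{2}b$ with $b$ squarefree. The diagonal $b=1$ gives a positive main term: since $\chi_d(a^{2})=\mathbf{1}_{(a,d)=1}$ and $e^{-\pi a^{4}/d}\gg 1$ as soon as $a^{4}\leq d$, the contribution is
\[
\sum_{a\leq (X/K)^{1/4}}\ \sideset{}{^*}{\sum}_{\substack{Ka^{4}\leq d\leq X\\(a,d)=1}} e^{-\pi a^{4}/d}\ \gg\ \sum_{a\leq (X/K)^{1/4}} X\ \asymp\ X^{5/4}
\]
for a fixed large constant $K$, using that fundamental discriminants coprime to any given integer have positive density in $[1,X]$. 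The off-diagonal contribution ($b>1$) is controlled by exploiting cancellation in $\sideset{}{^*}{\sum}_d\chi_d(b)$: one first separates the variables $n$ and $d$ in the weight (via the Mellin representation $e^{-\pi n^{2}/d}=\frac{1}{2\pi i}\int_{(\sigma)}\Gamma(s)(\pi n^{2}/d)^{-s}\,ds$ or a dyadic decomposition in $d$) and then invokes Heath-Brown's quadratic large sieve~\cite{HB}, yielding an error of size $O(X(\log X)^{O(1)})=o(X^{5/4})$.

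Combining these two inputs produces $X^{5/2}\ll N(X)\cdot X^{3/2}\log X$, whence $N(X)\gg X/\log X$, as claimed. The main technical obstacle is the off-diagonal estimate: the $d$-dependence of the exponential smoothing $e^{-\pi n^{2}/d}$ prevents a direct application of the large sieve and forces a Mellin-type separation of variables before cancellation in $\sideset{}{^*}{\sum}_d\chi_d(b)$ can be invoked. This is by now a standard manipulation in this circle of problems, of the same flavor as what already appears in the proof of Theorem~\ref{theta2nd}, and introduces no essentially new difficulty beyond careful bookkeeping of the contour integrals.
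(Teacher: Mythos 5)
Your argument is correct and is exactly the ``standard application of Cauchy--Schwarz'' the paper has in mind: a first-moment lower bound $\sideset{}{^*}{\sum}_{0<d\leq X}\theta(1,\chi_d)\gg X^{5/4}$ (diagonal from $n=\square$, off-diagonal negligible) divided against the second-moment upper bound of Theorem~\ref{theta2nd}. Two small remarks on the first moment: the off-diagonal needs only the P\'olya--Vinogradov bound of Lemma~\ref{sqrfree} combined with partial summation (Lemma~\ref{lem:weight}), so invoking Heath-Brown's large sieve is unnecessary overkill; and in the diagonal the density of fundamental discriminants coprime to $a$ is $\frac{6}{\pi^2}\prod_{p\mid a}\frac{p}{p+1}$, which is \emph{not} uniformly bounded below in $a$, so rather than citing a uniform ``positive density'' you should sum this multiplicative factor over $a\leq (X/K)^{1/4}$ and use that its mean value is a positive constant, which still yields the required $\gg X^{5/4}$.
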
 Note that the method allows to prove the same result at any point $t_0>0$ and in arithmetic progressions as in \cite[Theorem $2$]{LM}. In the family of even characters modulo a fixed modulus, stronger results are known by \cite[Theorem $1.4$]{dlbMT} using a combinatorial argument involving G\'{a}l sums to construct suitable \textit{mollifiers}.

\section{Overview of the proofs}
\par In order to obtain lower bounds for moments of character sums, we follow the method of Rudnick and Soundararajan  \cite{RS1,RS2} which was developed in the case of $L$-functions at the central point. A similar strategy was employed by the first named author and Shparlinski~\cite{thetalow} in order to obtain a lower bound on the moments of theta functions averaged over Dirichlet characters modulo $q$. This required an asymptotic for the number of solutions to the equation $x_1\cdots x_k= x_{k+1} \cdots x_{2k}$ where $x=(x_1,\dots,x_k)$ lies in a rectangular box. In our case, we need a result about the number of weigthed solutions to the equation $x_1\cdots x_k = \square$ where $x=(x_1,\dots,x_k)$ lies in a rectangular box. Both equations can be treated using de la Bret\`{e}che multivariate tauberian theorem \cite{dlB}. 
Note that a similar count was treated in \cite{KS}. \\

 For the upper bounds, we use Mellin inversion to bound the moments of character sums by the average of shifted moments of $L$-functions on the critical line. This method was introduced by the first named author in \cite{Munschtheta} for the moments of theta functions over the subgroup $X_q^+$ of even characters modulo a fixed prime $q$. This required optimal and uniform bounds on shifted moments of $L$-functions as obtained by the first named author and refined by Szab\'{o} \cite{SZ1}. These estimates were obtained by adapting the work of Soundararajan \cite{Sound} on conditional upper bounds for the moments of the Riemann zeta function, sharpened by Harper \cite{H1} for the optimal bounds. In the case of quadratic characters, following this strategy, Gao and Zhao proved an optimal result in \cite{GZ} (see Proposition \ref{th_shifted} below). Applying this result, it remains to optimally estimate the resulting multiple integral involving the shifts. We proceed differently as in \cite{GZ} using a new induction argument.  \\
 

\section{Background results}

We record several technical results. The first is the following ``orthogonality relation'' for quadratic characters, which is a simple application of the P\'olya-Vinogradov inequality.
\begin{lemma}\label{sqrfree}
For all positive integers $n$ we have 
$$ \sideset{}{^*}{\sum}_{0<d \leq X} \chi_d(n)\ll X^{1/2}n^{1/4}\log n,$$
if $n$ is not a perfect square. On the other hand if $n=m^2,$ then 
$$ \sideset{}{^*}{\sum}_{0<d \leq X} \chi_d(n)=\frac{6}{\pi^2}X\prod_{p|m}\left(\frac{p}{p+1}\right)+ O\big(X^{1/2} \tau(m)\big).$$ Under GRH, we can write it in a more compact way. For any $\varepsilon>0$, 
	\begin{align*}
	\sideset{}{^*}{\sum}_{0<d \leq X} \chi_{d}(n)=\frac{6}{\pi^2}X\prod_{p|n}\left(\frac{p}{p+1}\right)\mathds{1}_{n=\square}+ O\left(X^{1/2+\varepsilon}n^{\varepsilon}\right),
	\end{align*}
where $\mathds{1}_{n=\square}$ indicates the indicator function of the square numbers. 
\end{lemma}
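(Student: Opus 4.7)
The plan is to prove the three estimates by detecting the fundamental discriminant condition via Möbius inversion and then applying a character sum bound to what remains. Every positive fundamental discriminant is either squarefree with $d \equiv 1 \pmod 4$, or of the form $4e$ with $e$ squarefree and $e \equiv 2,3 \pmod 4$. Using $\mu^2(d) = \sum_{\ell^2 \mid d} \mu(\ell)$ to extract squarefreeness and writing $\chi_d(n) = \left(\frac{d}{n}\right)$, one obtains a decomposition
\[
\sideset{}{^*}{\sum}_{0 < d \leq X} \chi_d(n) = \sum_{\ell \leq \sqrt{X}} \mu(\ell) \left(\frac{\ell^2}{n}\right) \sum_{\substack{d \leq X/\ell^2 \\ d \in \mathcal{R}}} \left(\frac{d}{n}\right),
\]
where $\mathcal{R}$ is the union of residue classes mod $4$ (and mod $8$) encoding the two discriminant shapes, and the factor $\left(\frac{\ell^2}{n}\right)$ collapses to $\mathds{1}_{(\ell,n)=1}$.

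For the first bound, when $n$ is not a perfect square, $d \mapsto \left(\frac{d}{n}\right)$ is a non-principal real Dirichlet character of modulus dividing $4n$, so the Pólya–Vinogradov inequality bounds the inner sum by $O(\sqrt{n}\log n)$ uniformly in $\ell$. Combining this with the trivial estimate $X/\ell^2$ and splitting the $\ell$-sum at $\ell_0 \asymp \sqrt{X}/n^{1/4}$ yields
\[
\sum_{\ell \leq \sqrt{X}} \min\!\left(X/\ell^2,\, \sqrt{n}\log n\right) \ll \sqrt{X}\, n^{1/4}\log n,
\]
which is the first claim.

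For $n = m^2$, the Kronecker symbol reduces to $\chi_d(m^2) = \mathds{1}_{(d,m)=1}$, so the sum counts positive fundamental discriminants $\le X$ coprime to $m$. Inserting the squarefreeness decomposition and the residue-class description of fundamental discriminants, the main term comes from
\[
X \cdot \prod_{p\mid m}\!\left(1-\tfrac{1}{p}\right) \sum_{(\ell,m)=1}\frac{\mu(\ell)}{\ell^2} = \frac{6X}{\pi^2} \prod_{p\mid m}\frac{1-1/p}{1-1/p^2} = \frac{6X}{\pi^2}\prod_{p\mid m}\frac{p}{p+1},
\]
while truncating the Möbius sum at $\sqrt{X}$ and the residue-class remainders produce an error $\sum_{\ell\le\sqrt X}\tau(m) \ll \sqrt{X}\,\tau(m)$ as required.

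For the GRH statement, I would replace Pólya–Vinogradov by the conditional bound $\sum_{d \leq Y} \chi(d) \ll Y^{1/2+\varepsilon} q^\varepsilon$ valid for any non-principal Dirichlet character $\chi$ modulo $q$. Applied to each inner sum in the decomposition and summed over $\ell$, this gives $\sum_{\ell \leq \sqrt X}(X/\ell^2)^{1/2+\varepsilon} n^\varepsilon \ll X^{1/2+\varepsilon}n^\varepsilon$, yielding a single compact error term; the indicator $\mathds{1}_{n=\square}$ naturally appears because the main term is $0$ in the non-square case, while for $n = m^2$ the computation above supplies the stated main term. The main obstacle—really the only subtlety—is keeping accurate bookkeeping of the residue conditions defining $\mathcal{R}$ and the coprimality $(\ell,n) = 1$ in the Möbius truncation; beyond this, only the Pólya–Vinogradov inequality (or its GRH sharpening) is needed.
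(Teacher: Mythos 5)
Your argument is correct in substance, but note that the paper does not actually prove this lemma: its ``proof'' consists of citing \cite[Lemma 4.1]{GrSo} for the two unconditional statements and \cite[Lemma 1]{DM} for the GRH version. What you have written is essentially a reconstruction of the argument in those references: Möbius detection of the squarefree condition, splitting the fundamental discriminants into the two shapes $d\equiv 1\pmod 4$ squarefree and $d=4e$ with $e\equiv 2,3\pmod 4$ squarefree, and then Pólya--Vinogradov (resp.\ the GRH bound $\sum_{d\leq Y}\chi(d)\ll Y^{1/2+\varepsilon}q^{\varepsilon}$) applied to the resulting characters of modulus dividing $8n$, with the $\ell$-sum split at $\ell_0\asymp\sqrt{X}/n^{1/4}$ to produce the stated $X^{1/2}n^{1/4}\log n$; your main-term computation for $n=m^2$ also reproduces the constant $\tfrac{6}{\pi^2}\prod_{p\mid m}\tfrac{p}{p+1}$ correctly. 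The one place where your ``bookkeeping'' caveat conceals genuine content is the case where $n$ is not a square but its odd part is (e.g.\ $n=2m^2$): there $d\mapsto\left(\tfrac{d}{n}\right)$ restricted to odd $d$ equals a conductor-$8$ character times a principal character, so you must verify that multiplying by the characters used to detect the residue class of $d'$ never yields the principal character; this works because the class $d\equiv 1\pmod 4$ is detected by characters of conductor dividing $4$, while for even $d$ the symbol vanishes when $n$ is even. With that checked (and with $\tau(n)\ll n^{1/4}$ absorbing the coprimality bookkeeping into the stated error terms), your proof is complete and faithful to the cited sources.
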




\begin{proof}
The unconditional result is \cite[Lemma $4.1$]{GrSo} while the conditional one follows from \cite[Lemma $1$]{DM}. 
\end{proof}

The following lemma follows from partial summation and allows us to deal with weighted character sums. \begin{lemma}\cite[Lemma $14$]{Armon}\label{lem:weight}
Suppose that $k\leq l$ and that $a_n$ and $b_n$ are complex numbers. Let
$$S(n)= \sum_{m=k}^{n}a_m.$$
Let $M=\max_{k\leq n\leq l} \vert b_n \vert$ and $V=\sum_{n=k}^{l-1} \vert b_n-b_{n+1}\vert.$ Then
$$ \left\vert \sum_{n=k}^{l} a_n b_n \right\vert \leq (M+V) \max_{k\leq n\leq l}S_n.$$
\end{lemma}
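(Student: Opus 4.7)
The plan is to reduce the statement to a single application of Abel's summation formula (summation by parts), followed by the triangle inequality. First I would adopt the convention $S(k-1)=0$, so that $a_n = S(n)-S(n-1)$ holds for every $n$ in $[k,l]$. Substituting this telescoping identity into $\sum_{n=k}^{l} a_n b_n$ and regrouping the resulting expression by the common values of $S(n)$ yields the rearrangement
$$\sum_{n=k}^{l} a_n b_n = S(l)\,b_l + \sum_{n=k}^{l-1} S(n)\bigl(b_n - b_{n+1}\bigr),$$
which is the standard partial-summation identity in its discrete form.

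Next I would take absolute values and apply the triangle inequality to obtain
$$\left|\sum_{n=k}^{l} a_n b_n\right| \leq |S(l)|\,|b_l| + \sum_{n=k}^{l-1} |S(n)|\,|b_n - b_{n+1}|.$$
Now I would bound each occurrence of $|S(n)|$ uniformly by $\max_{k\leq n\leq l} |S(n)|$ and pull this maximum outside both terms. What remains is the single numerical factor $|b_l| + \sum_{n=k}^{l-1}|b_n-b_{n+1}|$, which is bounded above by $M+V$ by the definitions of these two quantities (the first term is at most $M$, the second is exactly $V$). Combining these bounds yields the claimed inequality.

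There is no real obstacle here beyond careful bookkeeping: the identity is classical, and the only points requiring any attention are the index shift (remembering the boundary convention $S(k-1)=0$ so that the term $-S(l)b_{l+1}$ does not spuriously appear) and the sign convention in the rearranged sum (writing $b_n - b_{n+1}$ rather than $b_{n+1}-b_n$ so that the absolute value matches $V$ exactly). Once these are in place, the inequality follows in one line.
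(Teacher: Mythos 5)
Your proof is correct and is exactly the standard partial-summation argument that the paper invokes by citing Armon's Lemma 14 (the paper gives no proof of its own). The only cosmetic discrepancy is that the lemma as printed writes $\max_{k\leq n\leq l} S_n$ where it plainly means $\max_{k\leq n\leq l}\vert S(n)\vert$, which is the quantity your argument (correctly) produces.
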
 

The next two results due to de la Bret\`{e}che allow us to obtain asymptotic estimates for partial sums of multivariate arithmetic functions. We denote the canonical basis of $\mathbb{C}^k$ by $\{\bm{e}_j\}_{j=1}^k$ and its dual basis by $\{\bm{e}_j^*\}_{j=1}^k$.

\begin{lemma}[{\cite[Th\'{e}or\`{e}me 1]{dlB}}]
\label{thm:breteche1}
    Let $f: \mathbb{N}^k \to \mathbb{R}$ be a non-negative function and $F$ the associated Dirichlet series of $
f$ defined by
\[
F(\bm{s}) = F(s_1,\dots,s_k) = \sum_{n_1,\dots,n_k=1}^\infty \frac{f(n_1,\dots,n_k)}{n^{s_1}\cdots n_k^{s_k}}.
\]
Denote by $\mathcal{LR}_k^+(\mathbb{C})$ the set of non-negative $\mathbb{C}$ linear forms from $\mathbb{C}^k$ to $\mathbb{C}$ on $(\mathbb{R}_{\geq 0})^k$. Moreover, assume that there exists $(c_1, \dots,c_k) \in (\mathbb{R}_{\geq 0})^k$ such that:
\begin{enumerate}[(1)]
\item For $\bm{s} \in \mathbb{C}^k$, $F(s_1,\dots,s_k)$ is absolutely convergent for $\re(s_i) > c_i$ for all $1 \leq i \leq k$.

\item There exist a finite family $\mathcal{L} = (l^{(i)})_{1\leq i \leq q}$ of non-zero elements of $\mathcal{LR}_k^+(\mathbb{C})$, a finite family $(h^{
(i)})_{1\leq i \leq q^\prime}$ of elements of $\mathcal{LR}_k^+(\mathbb{C})$ and $\delta_1, \delta_3>0$ such that the function $H$ defined by
\[
H(\bm{s}) = F(\bm{s} + \bm{c}) \prod_{i=1}^q l^{(i)}(\bm{s})
\]
has a holomorphic continuation to the domain
\begin{align*}
D(\delta_1, \delta_3) 
&= \left\{ \bm{s} \in \mathbb{C}^k \mid \re\left(l^{(i)}(\bm{s})\right) > -\delta_1 \text{ for all } 1 \leq i \leq q \right. \\
& \qquad\qquad\qquad \left. \text{ and } \re\left(h^{(i)}(\bm{s})\right)> -\delta_3 \text{ for all } 1\leq i \leq q^\prime \right\}.
\end{align*}
\item There exist $\delta_2>0$ such that for $\varepsilon,\varepsilon^\prime>0$ we have uniformly in $\bm{s} \in D(\delta_1 -\varepsilon^\prime,\delta_3-\varepsilon^\prime)$ 
\[
H(\bm{s}) \ll \prod_{i=1}^q \left(\abs{\im\left(l^{(i)}(\bm{s})\right)}+1 \right)^{1-\delta_2 \min \left\{0, \re\left( l^{(i)}(\bm{s})\right)\right\}}(1+(\abs{\im(s_1)}+\dots+\abs{\im(s_k)})^\varepsilon).
\]
\end{enumerate}
Set $J:=J(\bm{c}) = \{j \in \{1, \dots, k\} \mid c_j = 0\}$. Denote $w$ as the cardinality of $J$ and by $j_1 <\dots<j_w$ its elements in increasing order. Define the $w$ linear forms $l^{(q+i)}$ $(1 \leq i \leq w)$ by $l^{(q+i)}(\bm{s}) = {e^{*}}_{j_i}(\bm{s}) = s_{j_i}$.

Then, for any $\bm{\beta} = (\beta_1,\dots, \beta_k) \in (0,\infty)^k$, there exist a polynomial $Q_{\bm{\beta}} \in \mathbb{R}[X]$ of degree at most $q + w - Rank \left(l^{(1)}, \dots, l^{(q+w)}\right)$ and $\theta > 0$ such that as $x \to \infty$
\[
\sum_{n_1\leq x^{\beta_1}}\dots\sum_{n_k \leq x^{\beta_k}} f(n_1,\dots,n_k) = x^{\langle \bm{c}, \bm{\beta} \rangle} Q_{\bm{\beta}}(\log x) +O\left( x^{\langle \bm{c}, \bm{\beta} \rangle - \theta}\right).
\]
Here, $\langle \cdot, \cdot \rangle$ denotes the usual dot product in $\mathbb{R}^k$.
\end{lemma}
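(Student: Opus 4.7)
The plan is to prove this by multivariate Perron inversion followed by a contour shift that collects residues along the hyperplane arrangement cut out by the forms $l^{(i)}$. First, applying truncated Perron's formula in each coordinate rewrites the counting function as
\[
\sum_{n_1 \leq x^{\beta_1}} \cdots \sum_{n_k \leq x^{\beta_k}} f(n_1,\dots,n_k) = \frac{1}{(2\pi i)^k} \int_{(\sigma_1)} \cdots \int_{(\sigma_k)} F(\bm{s}) \prod_{j=1}^k \frac{x^{\beta_j s_j}}{s_j}\, d\bm{s} + E_1(x),
\]
with initial lines $\sigma_j := c_j + \beta_j^{-1} + \varepsilon$ lying to the right of the region of absolute convergence provided by hypothesis~(1); the truncation remainder $E_1(x)$ is absorbed into the final error using the growth hypothesis~(3).

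Next, translate by $\bm{u} = \bm{s}-\bm{c}$ and factor $F(\bm{s}+\bm{c}) = H(\bm{s}+\bm{c})\,\prod_{i=1}^q l^{(i)}(\bm{u})^{-1}$. The integrand now has polar divisors along $\{l^{(i)}(\bm{u}) = 0\}$ for $1 \leq i \leq q$ and, for each $j \in J$ (where $c_j = 0$), along $\{u_j=0\}$; the latter are precisely the auxiliary forms $l^{(q+1)},\dots,l^{(q+w)}$ introduced in the statement. Shift each contour leftward into the domain $D(\delta_1-\varepsilon',\delta_3-\varepsilon')$, crossing these hyperplanes one group at a time and collecting multivariate residues. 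The shifted integral is then absolutely convergent and of size $\ll x^{\langle \bm{c}, \bm{\beta}\rangle - \theta}$ for some $\theta>0$: the growth bound~(3) is calibrated precisely so that, after multiplication by the factors $l^{(i)}(\bm{u})^{-1}$, one gains polynomial decay of the form $(|\im l^{(i)}(\bm{u})|+1)^{-\delta_2 |\re l^{(i)}(\bm{u})|}$ on the shifted lines.

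For the main term, expand $x^{\beta_j u_j} = \sum_{m\geq 0}(\beta_j u_j \log x)^m/m!$ and Taylor-expand $H(\bm{u}+\bm{c})$ at the origin. Extracting the common factor $x^{\langle \bm{c},\bm{\beta}\rangle}$, the iterated residue along the arrangement $\{l^{(1)},\dots,l^{(q+w)}\}$ produces a polynomial in $\log x$. A combinatorial identity for residues on central hyperplane arrangements shows that each linear dependence among the $l^{(i)}$'s kills one factor of $\log x$ that the naive count would predict, so the degree is at most $q+w-\operatorname{Rank}(l^{(1)},\dots,l^{(q+w)})$, as asserted. The coefficients of $Q_{\bm{\beta}}$ are then polynomial in the $\beta_j$'s and in the Taylor coefficients of $H$ at $\bm{c}$.

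The main obstacle is precisely this residue computation: if the $l^{(i)}$'s were in general position the degree bound would be immediate, but handling arbitrary central arrangements requires the local residue calculus of Orlik--Solomon / Brion--Vergne type, which is the technical heart of de la Bret\`{e}che's original argument. Controlling the dependence on $\bm{\beta}$ so that the error $x^{\langle \bm{c},\bm{\beta}\rangle - \theta}$ holds uniformly on the prescribed ranges presents a secondary bookkeeping difficulty in the contour-shift step, but poses no conceptual issue once the structure of the iterated residues has been resolved.
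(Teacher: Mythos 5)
You should first be aware that the paper does not prove this statement: it is Th\'{e}or\`{e}me 1 of de la Bret\`{e}che \cite{dlB}, imported as a black box, and the bracketed citation in the lemma header is the entirety of the paper's ``proof''. So the only question is whether your sketch would actually establish the theorem on its own, and as written it would not.

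Two of the steps you describe as routine conceal the real difficulties. First, after the simultaneous $k$-fold contour shift you claim the shifted integral converges absolutely because hypothesis (3) is ``calibrated precisely'' to yield decay $(\abs{\im l^{(i)}}+1)^{-\delta_2\abs{\re l^{(i)}}}$. But hypothesis (3) permits $H$ to \emph{grow} polynomially in the imaginary parts, and after dividing by $\prod_i l^{(i)}(\bm{u})$ the net gain in each form is only $\delta_2\abs{\re l^{(i)}}$, which is arbitrarily small near the critical point; combined with the extra factor $(\sum_j\abs{\im s_j})^{\varepsilon}$ and only one power of $1/\abs{s_j}$ per coordinate, a straight shift to vertical lines does not produce an absolutely convergent $k$-fold integral for $k\geq 2$. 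This is exactly why de la Bret\`{e}che's actual argument does not perform a naive simultaneous shift: it reduces to iterated one-dimensional Mellin inversions with truncated contours (an auxiliary-variable substitution of the shape $s_j=\beta_j z+w_j$ and an induction peeling off one linear form at a time), from which the saving $x^{-\theta}$ is extracted. Second, you defer the iterated-residue computation on a non-generic central arrangement to ``Orlik--Solomon / Brion--Vergne type'' calculus and concede it is the technical heart; but without carrying it out neither the existence of the polynomial $Q_{\bm{\beta}}$ nor the degree bound $q+w-\mathrm{Rank}(l^{(1)},\dots,l^{(q+w)})$ is established, and that degree bound is the actual content of the theorem. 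A lesser point: the multivariate Perron truncation error $E_1(x)$ is controlled by crude bounds on $F$ just to the right of the convergence abscissae together with $f\geq 0$, not by hypothesis (3), which governs $H$ only inside the critical region. In short, your outline names the correct objects, but the two places where you wave your hands are precisely where the proof lives; for the purposes of this paper the correct move is to cite \cite{dlB}, as the authors do.
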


The next theorem determines the precise degree and leading coefficient of the polynomial $Q_{\bm{\beta}}$ appearing in the previous theorem. We denote by $\mathbb{R}_*^+$ the set of strictly positive real numbers, the notation $con^*(\{l^{(1)},\dots,l^{(q)}\})$ means $\mathbb{R}_*^+ l^{(1)}+\dots+\mathbb{R}_*^+ l^{(q)}$.

\begin{lemma}[{\cite[(iii) and (iv) of Th\'{e}or\`{e}me 2]{dlB}}]
\label{thm:breteche2}
Let $f : \mathbb{N}^k \to \mathbb{R}$ be a non-negative function satisfying the assumptions of Lemma \ref{thm:breteche1}. Let $\bm{\beta} = (\beta_1,\dots,\beta_k) \in (0,\infty)^k$. Set $\mathcal{B} = \sum_{i=1}^k\beta_i \bm{e}_i^* \in \mathcal{LR}_k^+(\mathbb{C})$.
\begin{enumerate}[(1)]
\item If the Dirichlet series $F$ satisfies the additional two assumptions:
\begin{enumerate}[(C1)]
\item\label{C1}  There exists a function $G$ such that $H(\bm{s}) = G\left(l^{(1)}(\bm{s}),\dots,l^{(q+w)}(\bm{s}) \right)$.
\item\label{C2} $\mathcal{B} \in Vect \left(\{l^{(i)} \mid i = 1,\dots q + w\}\right)$ and there is no subfamily $\mathcal{L}^\prime$ of $\mathcal{L}_0 :=\left( l^{(i)}\right)_{1\leq i \leq q+w}$ such that $\mathcal{L}^\prime \neq \mathcal{L}_0$, $\mathcal{B} \in Vect(\mathcal{L}^\prime)$ and $\#\mathcal{L}^\prime -Rank(\mathcal{L}^\prime) = \#\mathcal{L}_0 - Rank(\mathcal{L}_0)$.
\end{enumerate}
Then, the polynomial $Q_{\bm{\beta}}$ satisfies the relation
\[
Q_{\bm{\beta}}(\log x) = H(\bm{0})x^{-\langle \bm{c}, \bm{\beta} \rangle}\mathcal{I}_{\bm{\beta}}(x)+O((\log x)^{\rho-1}),
\]
where $\rho := q+w-Rank(l^{(1)},\dots, l^{(q+w)})$ and 
\[
\mathcal{I}_{\bm{\beta}}(x) := \int_{\mathcal{A}_{\bm{\beta}}(x)} \frac{dy_1\cdots dy_q}{\prod_{i=1}^q y_i^{1-l^{(i)}(\bm{c})}},
\]
with
\[
\mathcal{A}_{\bm{\beta}(x)} := \left\{ \bm{y} \in [1,\infty)^q \middle| \prod_{i=1}^q y_i^{l^{(i)}(\bm{e}_j)} \leq x^{\beta_j} \text{ for all } 1 \leq j \leq k \right\}.
\]
\item If $Rank\left(l^{(1)},\dots,l^{(q+w)} \right)=k, H(\bm{0})\neq 0$ and $\mathcal{B} \in con^* \left(\{ l^{(1)},\dots,l^{(q+w)} \} \right)$, then $\operatorname{deg}(Q_{\bm{\beta}}) =  q + w - k$.
\end{enumerate}
\end{lemma}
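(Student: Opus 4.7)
The plan is to deduce both parts by combining an effective multivariate Perron formula with the analytic continuation furnished by Lemma \ref{thm:breteche1}. The starting point is
\[
\sum_{n_1 \le x^{\beta_1}} \dots \sum_{n_k \le x^{\beta_k}} f(n_1,\dots,n_k) = \frac{1}{(2\pi i)^k} \int_{(a_1)} \dots \int_{(a_k)} F(\bm{s}) \prod_{j=1}^{k} \frac{x^{\beta_j s_j}}{s_j}\, d\bm{s}
\]
for some $a_j > c_j$, with sharp truncations smoothed by the usual Mellin trick. Translating $s_j \mapsto s_j + c_j$ and using $F(\bm{s}+\bm{c}) = H(\bm{s})/\prod_{i=1}^{q} l^{(i)}(\bm{s})$, the integrand becomes $x^{\langle \bm{c},\bm{\beta}\rangle}$ times a meromorphic function of $\bm{s}$ whose only singularities in $D(\delta_1,\delta_3)$ come from the $q+w$ linear forms of $\mathcal{L}_0$ (the last $w$ being the coordinates $s_{j_i}$ for $j_i \in J$). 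Shifting each contour to an arbitrarily small neighbourhood of the origin and invoking the polynomial growth bound (3) produces both the main term, as an iterated residue at $\bm{s}=\bm{0}$, and a power-saving error $O(x^{\langle \bm{c},\bm{\beta}\rangle-\theta})$.

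Under condition (C\ref{C1}) I would next perform the change of variables $u_i = l^{(i)}(\bm{s})$ for $1 \le i \le q+w$. Condition (C\ref{C2}) guarantees that $\mathcal{B}$ lies in the span of $\mathcal{L}_0$, so the induced Jacobian is non-degenerate on the relevant subspace; the complementary directions integrate trivially since $H$ only depends on the $l^{(i)}(\bm{s})$. Substituting $y_i = x^{u_i}$, the half-spaces $\re(s_j) \le 0$ needed for contour shifting translate into the inequalities $\prod_i y_i^{l^{(i)}(\bm{e}_j)} \le x^{\beta_j}$, i.e.\ precisely $\mathcal{A}_{\bm{\beta}}(x)$. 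Replacing $G$ by its value at the origin yields the leading contribution $H(\bm{0})\, \mathcal{I}_{\bm{\beta}}(x)$, and the minimality clause in (C\ref{C2}) ensures that no strict subfamily of $\mathcal{L}_0$ contributes the same power of $\log x$, confining the remainder to $O((\log x)^{\rho-1})$.

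For part~(2) I would analyse $\mathcal{I}_{\bm{\beta}}(x)$ directly. Writing $y_i = x^{t_i}$ recasts the integral over $\mathcal{A}_{\bm{\beta}}(x)$ as a polynomial in $\log x$ whose degree equals the dimension of the maximal face of the rescaled polytope $\{ \sum_i l^{(i)}(\bm{e}_j)\, t_i \le \beta_j : 1 \le j \le k\}$. When $Rank(\mathcal{L}_0) = k$ this face has dimension $(q+w) - k$, and the assumption $\mathcal{B} \in con^*(\mathcal{L}_0)$ places $\bm{\beta}$ inside the open cone of positivity, so the leading coefficient of that polynomial is strictly positive. Combined with $H(\bm{0}) \neq 0$, this yields $\deg(Q_{\bm{\beta}}) = q+w-k$.

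The main obstacle will be the effective control of the iterated contour shift: one has to move $k$ contours past an arrangement of $q+w$ hyperplanes through the origin while keeping the polynomial growth estimate (3) uniform, and one must disentangle simultaneous poles (when several linear forms vanish together) via a Mellin--Barnes type regularisation. Tracking how the saving $\theta$ depends on $\delta_1,\delta_2,\delta_3$ and verifying the compatibility of the change of variables with the iterated residues is the delicate combinatorial heart of the argument, and is precisely what is carried out in de la Br\`{e}teche's original work~\cite{dlB}.
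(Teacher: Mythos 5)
This lemma is not proved in the paper at all: it is quoted verbatim from de la Bret\`{e}che \cite[Th\'{e}or\`{e}me 2 (iii) and (iv)]{dlB} and used as a black box, so there is no internal proof to compare your argument against. Your sketch does follow the general strategy of the original work (multivariate Perron/Mellin inversion, contour shifts past the polar divisors $l^{(i)}(\bm{s})=0$, and a change of variables turning the residue at the origin into the volume integral $\mathcal{I}_{\bm{\beta}}(x)$), so as an \emph{outline} it is pointed in the right direction.

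As a proof, however, it has genuine gaps, several of which you flag yourself without resolving. First, the iterated contour shift is not a routine residue computation: the $q+w$ hyperplanes $l^{(i)}(\bm{s})=0$ all pass through the origin and are highly linearly dependent (since in general $q+w>k$), so one cannot simply ``take the iterated residue at $\bm{s}=\bm{0}$''; de la Bret\`{e}che's argument requires a careful decomposition of the polar locus, and this is exactly where the quantity $\rho=q+w-\mathrm{Rank}(\mathcal{L}_0)$ and the minimality clause in (C\ref{C2}) enter. Your proposal asserts that (C\ref{C2}) ``ensures that no strict subfamily of $\mathcal{L}_0$ contributes the same power of $\log x$'' but gives no mechanism for why; this is the heart of part (1) and cannot be waved through. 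Second, the proposed change of variables $u_i=l^{(i)}(\bm{s})$ for $1\le i\le q+w$ is not a change of variables on $\mathbb{C}^k$ when $q+w>k$, so the passage from the $k$-dimensional contour integral to the $q$-dimensional volume integral $\mathcal{I}_{\bm{\beta}}(x)$ needs an actual construction (in \cite{dlB} this is done via an auxiliary integral representation, not a substitution). Third, in part (2) the claim that the $\log x$-degree of $\mathcal{I}_{\bm{\beta}}(x)$ equals $q+w-k$ requires showing that the polytope $\{t_i\ge 0,\ \sum_i l^{(i)}(\bm{e}_j)t_i\le\beta_j\}$ meets the maximizing face in a set of full dimension $q+w-k$ with strictly positive measure; you correctly identify $\mathcal{B}\in con^*(\mathcal{L}_0)$ as the relevant hypothesis but do not verify the implication. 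Since the paper's intent is simply to cite \cite{dlB}, the appropriate treatment here is the citation itself rather than a reconstruction of the proof.
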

With these results in hand, we obtain precise estimates for the weighted solutions to the equation $x_1\cdots x_k= \square$.
\begin{lemma}\label{count_squares}
    Let $k\geq 2$ and $\bm{\beta}=(\beta_1,\dots,\beta_k)$ a $k$-tuple of strictly positive real numbers. Let $\alpha:= \sum_{i=1}^{k}\frac{\beta_i}{2}.$ Then there exist a polynomial $Q_{\bold{\beta}}$ of degree $\frac{k(k-1)}{2}$ and  $\delta_k>0$ such that for large $Y$, we have 
    $$\sum_{n_1n_2\cdots n_k =\square \atop 1\leq n_i \leq Y^{\beta_i}} \prod_{p \mid n_1 \cdots n_k}\left(1+1/p\right)^{-1} = Y^{\alpha}Q_{\bm{\beta}}(\log Y) + O(Y^{\alpha-\delta_k}).$$
\end{lemma}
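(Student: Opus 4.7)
The plan is to apply de la Bret\`{e}che's multivariate tauberian theorems (Lemmas~\ref{thm:breteche1}--\ref{thm:breteche2}) to the Dirichlet series
\[
F(\bm{s}) = \sum_{n_1,\ldots,n_k \geq 1} \frac{\mathds{1}_{n_1\cdots n_k = \square}}{n_1^{s_1}\cdots n_k^{s_k}} \prod_{p \mid n_1 \cdots n_k} \left(1+\tfrac{1}{p}\right)^{-1}.
\]
The summand depends multiplicatively on the prime factorisations of the $n_i$, so $F$ admits an Euler product. Using the identity
\[
\sum_{a_1+\cdots+a_k \text{ even}}u_1^{a_1}\cdots u_k^{a_k} = \tfrac{1}{2}\bigl[\prod_i(1-u_i)^{-1}+\prod_i(1+u_i)^{-1}\bigr]
\]
together with the trivial contribution from $\bm{a}=\bm{0}$, the local factor reads
\[
F_p(\bm{s}) = \frac{1}{p+1} + \frac{p}{2(p+1)}\left[\prod_{i=1}^{k}\bigl(1-p^{-s_i}\bigr)^{-1} + \prod_{i=1}^{k}\bigl(1+p^{-s_i}\bigr)^{-1}\right].
\]

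I would next extract the singular behaviour at $\bm{c}=(\tfrac{1}{2},\ldots,\tfrac{1}{2})$ by introducing the $q=\binom{k+1}{2}$ linear forms $\ell^{(ij)}(\bm{s}) := s_i+s_j$ for $1 \leq i \leq j \leq k$, and writing
\[
F(\bm{s}) = \prod_{1 \leq i \leq j \leq k}\zeta(s_i+s_j) \cdot H(\bm{s}).
\]
A direct Taylor expansion shows $F_p(\bm{s})\prod_{i \leq j}(1 - p^{-s_i - s_j}) = 1 + O(p^{-1-\eta})$ uniformly for $\bm{s}$ in a neighbourhood of $\bm{c}$; the weight $\prod(1+1/p)^{-1}$ is precisely what is required to cancel the leading $O(1/p)$ contribution after pulling out the $\binom{k+1}{2}$ zeta factors, so that $H$ extends holomorphically to a domain of the form $\{\mathrm{Re}(s_i+s_j)>1/2-\eta\}$. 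Specialising to $\bm{s}=\bm{c}$ recovers the arithmetic factor $c_k$ of Theorem~\ref{asymptotic for S_k(X,Y)}.

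With this factorisation in hand, hypothesis (1) of Lemma~\ref{thm:breteche1} is immediate, hypothesis (2) follows directly, and hypothesis (3) reduces to classical convexity bounds for $\zeta$ on vertical lines. Condition (C1) of Lemma~\ref{thm:breteche2} is automatic since the diagonal forms $\ell^{(ii)}=2s_i$ already span $\mathbb{C}^k$, so any function of $\bm{s}$ is trivially a function of the $\ell^{(ij)}$. Condition (C2) is a combinatorial check: because $\beta_j>0$ for all $j$, any subfamily of $\mathcal{L}_0$ containing $\mathcal{B}=\sum_j \beta_j \bm{e}_j^*$ must have rank $k$, so the extremal value $\#\mathcal{L}_0 - \mathrm{Rank}(\mathcal{L}_0) = q - k = k(k-1)/2$ is attained only by the full family. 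Lemma~\ref{thm:breteche2}(2) then yields an asymptotic of the stated shape with $\deg Q_{\bm{\beta}} = q+w-k = k(k-1)/2$ and a power-saving error $O(Y^{\alpha-\delta_k})$ for some $\delta_k>0$, as required.

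The main obstacle is establishing the uniform polynomial growth of $H(\bm{s})$ on vertical strips required by condition (3) of Lemma~\ref{thm:breteche1}. Although the pole structure is captured cleanly by the explicit $\zeta$-factorisation, controlling $H$ uniformly across the $k$-dimensional strip demands careful bookkeeping of the Euler product, most likely by extracting a secondary correction such as $\zeta(2s_i+2s_j)^{-1}$ to widen the domain of absolute convergence of the residual factor to the range needed.
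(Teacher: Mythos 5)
Your proposal is correct and follows essentially the same route as the paper: the same multivariate Dirichlet series, the same factorisation pulling out the $\binom{k+1}{2}$ zeta factors $\zeta(2s_j)$ and $\zeta(s_{l_1}+s_{l_2})$, and the same application of de la Bret\`{e}che's two theorems to get degree $q+w-k=k(k-1)/2$ (the paper only needs part (2) of Lemma~\ref{thm:breteche2} here, deferring (C1)--(C2) to the leading-coefficient computation). The ``main obstacle'' you flag in condition (3) is not really one: the residual Euler factor is $1+O_k(p^{-4\sigma})$, so $E(\bm{s}+\bm{1/2})$ is already uniformly bounded on $\re(s_j)\geq -1/4+\varepsilon'$ and the growth condition follows from convexity bounds on the zeta factors alone, with no secondary correction needed.
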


In order to prove Lemma \ref{count_squares}, we review some basic facts of arithmetical functions in several variables. For a positive integer $r$, let $f : \mathbb{N}^r \to \mathbb{C}$ be an arithmetic function of $r$ variables. Then, $f$, which is not identically zero, is said to be multiplicative if $f(1,\dots,1)=1$ and
\[
f(m_1n_1,\dots,m_rn_r) = f(m_1,\dots,m_r)f(n_1,\dots,n_r)
\]
holds for any $m_1,\dots,m_r, n_1,\dots,n_r \in \mathbb{N}$ such that $\gcd(m_1 \cdots m_r, n_1 \cdots n_r) = 1$. This is a generalization of the classical one-variable multiplicative functions that satisfy $f(1)=1$ and $f(mn) = f(m)f(n)$ for $\gcd(m,n)=1$. 

If $f$ is multiplicative, then it is determined by the values $f(p^{v_1}, \dots, p^{v_r})$, where $p$ is prime and $v_1, \dots,v_r \in \mathbb{N}_0$. Thus, a formal Dirichlet series in several variables of $f$ admits an Euler product expansion:
\begin{align}
\label{Euler prod}
\sum_{n_1, \dots n_r =1}^\infty \frac{f(n_1,\dots, n_r)}{n_1^{s_1}\cdots n_r^{s_r}} =\prod_{p} \left( \sum_{v_1,\dots,v_r = 0}^\infty \frac{f(p^{v_1}, \dots, p^{v_r})}{p^{v_1s_1+\dots+v_rs_r}}\right).
\end{align}

In the literature on multiplicative functions in several variables, the corresponding multiple Dirichlet series appears for the first time in the paper by Vaidyanathaswamy~\cite{V31}. The theory of multiple Dirichlet series was further developed by some authors without mentioning~\cite{V31}. In 1977, Selberg formulated multiple Dirichlet series attached to multiplicative functions in several variables (see~\cite[Chapter I, Definition 4.17]{T}). For details of the theory of multiple Dirichlet series, we refer to~\cite{To}.

\begin{proof}[Proof of Lemma \ref{count_squares}]
Let $f(n_1,\dots,n_k) := \prod_{p \mid n}\left(1+1/p \right)^{-1} \mathds{1}_{n=\square}$. We consider a multiple Dirichlet series associated to $f$ as
\begin{align*}
F(s_1,\dots,s_k) &:= \sum_{n_1, \dots n_{k} =1}^\infty \frac{f(n_1,\dots, n_{k})}{n_1^{s_1}\cdots n_{k}^{s_{k}}}. 
\end{align*}
Since $f(n_1,\dots,n_k)$ is multiplicative, $F(s_1,\dots,s_k)$ has the following Euler product expansion:
\begin{align}
\label{multiple-F}
\begin{split}
F(s_1,\dots,s_k) &= \prod_{p} \left( 1+ \sum_{n=1}^\infty \sum_{\substack{v_1,\dots,v_{k} =0 \\ v_1+\dots+v_{k} = 2n}}^\infty \left(\frac{p}{p+1}\right) \frac{1}{p^{v_1s_1+\dots+v_{k}s_{k}}} \right) \\
    &= \left(\prod_{1 \leq j \leq k} \zeta(2s_j) \prod_{1 \leq l_1<l_2 \leq k} \zeta(s_{l_1}+s_{l_2})\right) E(s_1,\dots,s_{k}), 
\end{split}
\end{align}
where 
\begin{align}
\label{E(s)}
\begin{split}
E(s_1,\dots,s_{k}) 
&= \prod_{p} \left( 1+ \sum_{n=1}^\infty \sum_{\substack{v_1,\dots,v_{k} =0 \\ v_1+\dots+v_{k} = 2n}}^\infty \left(\frac{p}{p+1}\right) \frac{1}{p^{v_1s_1+\dots+v_{k}s_{k}}} \right) \\
& \quad \times \prod_{1 \leq j \leq k} \left( 1-\frac{1}{p^{2s_j}}\right) \prod_{1 \leq l_1<l_2 \leq k} \left( 1-\frac{1}{p^{s_{l_1}+s_{l_2}}}\right).
\end{split}
\end{align}
By putting $\sigma = \min \{ \sigma_j \mid 1 \leq j \leq k\}$ we have
\begin{align*}
\sum_{n=2}^\infty \sum_{\substack{v_1,\dots,v_{k} =0 \\ v_1+\dots+v_{k}=2n}}^\infty \left(\frac{p}{p+1}\right) \frac{1}{p^{v_1s_1+\dots+v_{k}s_{k}}} 
&\ll \sum_{n=2}^\infty \frac{1}{p^{2n\sigma}} \sum_{\substack{v_1,\dots,v_{k}\geq 0 \\ v_1+\dots+v_{k}=2n}}1 \\
&\ll_k \sum_{n=2}^\infty \frac{(2n+1)^{k}}{p^{2n\sigma}} \ll_k \frac{1}{p^{4 \sigma}}.
\end{align*}
Hence, we have
\begin{align*}    
&E(s_1,\dots,s_{k}) \\
&= \prod_{p} \left( 1+\sum_{\substack{v_1,\dots,v_{k} \geq 0 \\ v_1+\dots+v_{k}=2}} \left(\frac{p}{p+1}\right) \frac{1}{p^{v_1s_1+\dots+v_{k}s_{k}}} + \sum_{n=2}^\infty \sum_{\substack{v_1,\dots,v_{k} =0 \\ v_1+\dots+v_{k} = 2n}}^\infty \left(\frac{p}{p+1}\right) \frac{1}{p^{v_1s_1+\dots+v_{k}s_{k}}} \right) \\
&\qquad \times \prod_{1 \leq j \leq k} \left( 1-\frac{1}{p^{2s_j}}\right) \prod_{1 \leq l_1<l_2 \leq k} \left( 1-\frac{1}{p^{s_{l_1}+s_{l_2}}}\right) \\
&=\prod_{p} \left( 1+\sum_{\substack{1 \leq i \leq k \\ \\ \\}} \left(1-\frac{1}{p+1}\right) \frac{1}{p^{2s_i}} +\sum_{1 \leq h_1<h_2 \leq k} \left(1-\frac{1}{p+1}\right) \frac{1}{p^{s_{h_1}+s_{h_2}}} +O_k\left(\frac{1}{p^{4\sigma}}\right) \right) \\
&\qquad \times \left(1 - \sum_{1 \leq j \leq k} \frac{1}{p^{2s_j}}+ O_k\left(\frac{1}{p^{4\sigma}}\right)\right) \left(1-\sum_{1 \leq l_1<l_2 \leq k} \frac{1}{p^{s_{l_1}+s_{l_2}}}+O_k\left( \frac{1}{p^{4\sigma}}\right)\right) \\
&= \prod_{p} \left(1+O_k\left( \frac{1}{p^{4\sigma}}\right)\right).
\end{align*}
Therefore, $E(s_1,\dots,s_{k})$ is absolutely convergent for $\re(s_j)>1/4$.

From (\ref{multiple-F}), we find that the series $F(s)$ converges absolutely for $\re(s_j) > 1/2$ for all $1 \leq j \leq k$ and thus satisfies (1) of Lemma \ref{thm:breteche1}. 

Next, we write $\bm{1/2} = (1/2,\dots,1/2)$. Then $F(\bm{s} + \bm{1/2})$ is an absolutely convergent series for $\re(s_j)> 0$. Therefore, we define the function
\begin{align}
\label{H(s)}
H(\bm{s}):=F(\bm{s} + \bm{1/2})\left(\prod_{1 \leq j \leq k}2s_j \right)\left( \prod_{1 \leq l_1 <l_2 \leq k}(s_{l_1}+s_{l_2}) \right).    
\end{align}
So, we take $\mathcal{L} = \left(l^{(i)}(\bm{s})\right)_{1 \leq i \leq r+k} := \{2s_j \mid 1 \leq j \leq k\} \cup \{s_{l_1}+s_{l_2} \mid 1 \leq l_1<l_2 \leq k \}$ in (2) of Lemma \ref{thm:breteche1}.
Then, by (\ref{multiple-F}) it can be rewritten as
\begin{align*}
    H(\bm{s})&=\left(\prod_{1 \leq j \leq k}\zeta(2s_j+1)2s_j \right)\left( \prod_{1 \leq l_1 <l_2 \leq k}\zeta(s_{l_1}+s_{l_2}+1)(s_{l_1}+s_{l_2}) \right)E(\bm{s}+\bm{1/2}).
\end{align*}
For $j \in \{1,\dots,k\}$, there exists $\delta_1 \in (0,1/4)$ such that $\zeta(2s_j+1)2s_j$ has analytic continuation to the plane $\re(s_j)>-\delta_1$. Similarly, $\zeta(s_{l_1}+s_{l_2}+1)(s_{l_1}+s_{l_2})$ also has analytic continuation to the plane $\re(s_j)>-\delta_1$. 

Furthermore, $E(\bm{s}+\bm{1/2})$ is holomorphic in $\re(s_j)>-\delta_1$ for $1 \leq j \leq k$. Since $c_j = 1/2$ for all $1 \leq j \leq k$, we can take $h^{(j)}(\bm{s}) = s_j$ in the notation of Lemma \ref{thm:breteche1}. Then we put $\delta_3=\delta_1$. Therefore, $F$ also satisfies (2) of Lemma \ref{thm:breteche1}.

Finally, for $\re(s_j) > -1/4$, by applying the convexity bound of the Riemann zeta-function,
\begin{align*}
\zeta(s_j+1)2s_j &\ll (\abs{s_j}+1)^{1- \frac{1}{2}\min \{0, \re(s_j)\}+\varepsilon}, \\
\zeta(s_{l_1}+s_{l_2}+1)(s_{l_1}+s_{l_2}) &\ll (\abs{s_{l_1}+s_{l_2}}+1)^{1- \frac{1}{2}\min \{0, \re(s_{l_1}+s_{l_2})\}+\varepsilon}
\end{align*}
hold. The above argument shows that $H(s)$ satisfies (3) of Lemma \ref{thm:breteche1} with $\delta_2 = 1/2$.

Since $q= k+\binom{k}{2} = k+k(k-1)/2, w=0$ and the rank of the linear forms in $\mathcal{L}$ is $k$, we have 
\[
\sum_{1 \leq n_1 \leq Y^{\beta_1}}\dots \sum_{1 \leq n_k \leq Y^{\beta_k}} f(n_1,\dots,n_k) = Y^{\alpha} Q_{\bm{\beta}}(\log Y) + O(Y^{\alpha-\delta_k}),
\]
where $Q_{\bm{\beta}}(\log X)$ is a polynomial of degree at most $k(k-1)/2$.

The remaining task is to determine the degree of the polynomial $Q$ by using Lemma \ref{thm:breteche2}. Since $\bm{c}=\bm{1/2}$, we know that $w=0$, and then $Rank\left(l^{(1)},\dots,l^{(q)} \right) =k$. Moreover as $s_j, s_{l_1}+s_{l_2} \to 0$, we have
\begin{align*}
    \zeta(2s_j+1)2s_j \to 1, \quad \zeta(s_{l_1}+s_{l_2}+1)(s_{l_1}+s_{l_2}) \to 1.
\end{align*}
From the Euler product, it holds that $E(\bm{1/2})$ does not vanish. Hence $H(\bm{0})\neq 0$. At last, we see that $\mathcal{B} = \sum_{i=1}^{k} \bm{e}_i^*(\bm{s}) \in con^* \left(\{ l^{(1)},\dots,l^{(q)} \} \right)$ for $\bm{e}_i^*(\bm{s}) = s_i$. Therefore, by Lemma \ref{thm:breteche2} (2), we have $\operatorname{deg}(Q) = k(k-1)/2$ which completes the proof.

\end{proof}

We record the main result of \cite[Corollary $1.2$]{GZ} about bounds on shifted moments of $L$- functions that we state in a special case which is sufficient for our application.
\begin{proposition}\label{th_shifted}
 Under the truth of GRH, let $k\geq 1$ be a fixed integer and $A$ be a fixed positive real number. Suppose that $X$ is a large real number and $t=(t_1,\ldots ,t_{k})$ a real $k$-tuple with $|t_j|\leq  X^A$. Then
\begin{align*}
\begin{split}
  \sideset{}{^*}{\sum}_{\substack{(d,2)=1 \\ d \leq X}} & \big| L\big( \tfrac{1}{2} +\rm{i}t_1,\chi^{(8d)} \big) \big| \cdots \big| L\big(\tfrac{1}{2}+\rm{i}t_{k},\chi^{(8d)}  \big) \big| \\
& \ll  X(\log X)^{k/4} \prod_{1\leq i<j\leq k} g(|t_i-t_j|)^{1/2}g(|t_i+t_j|)^{1/2}\prod_{1\leq i\leq k} g(|2t_i|)^{3/4},
\end{split}
\end{align*}
where $g:\mathbb{R}_{\geq 0} \rightarrow \mathbb{R}$ is defined by
\begin{equation} \label{gDef}
g(x) =\begin{cases}
\log X,  & \text{if } x\leq 1/\log X \text{ or } x \geq e^X, \\
1/x, & \text{if }   1/\log X \leq x\leq 10, \\
\log \log x, & \text{if }  10 \leq x \leq e^{X}.
\end{cases}
\end{equation}
Here, the implied constant depends on $k$ and $A$, but not on $X$ or the $t_j$'s.
\end{proposition}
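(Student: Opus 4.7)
The plan is to follow the Soundararajan--Harper framework adapted to the family of real primitive characters with multiple additive shifts. Under GRH, for every fundamental discriminant $8d$, any real $t$, and any $2\leq x\leq X^2$, Soundararajan's inequality gives
\begin{equation*}
\log\bigl|L(\tfrac12+it,\chi^{(8d)})\bigr|\leq\re\!\!\sum_{p\leq x}\frac{\chi^{(8d)}(p)}{p^{1/2+it}}\frac{\log(x/p)}{\log x}+\tfrac12\!\!\sum_{p\leq x^{1/2}}\frac{1}{p^{1+2it}}+O\!\left(\frac{\log X}{\log x}\right),
\end{equation*}
where the prime-square term uses $\chi^{(8d)}(p)^2=1$ for $p\nmid 8d$. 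Adding these inequalities over $j=1,\ldots,k$ and exponentiating reduces the proposition to bounding an average $\sideset{}{^*}{\sum}_{d\leq X}\exp(P(d;\bm{t}))$, where $P(d;\bm{t})$ is a short weighted Dirichlet polynomial in $\chi^{(8d)}(p)/\sqrt{p}$ depending on all shifts.

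For this average I would follow Harper's splitting of $(1,x]$ into geometric ranges $I_\ell=(x^{1/K^{\ell+1}},x^{1/K^\ell}]$ and apply H\"older across the ranges with weights chosen so that on each range the restricted polynomial is short enough to Taylor-expand $\exp(\cdot)$ to a safe order $2m_\ell$. Each resulting term is of the form $\sum_n a_n(\bm{t};\ell)\chi^{(8d)}(n)/\sqrt{n}$ supported on $n$ with prime factors in $I_\ell$ and total length $\ll X^{2/3}$; Lemma~\ref{sqrfree} then extracts the diagonal $n=\square$ with acceptable error. The diagonal contribution on range $I_\ell$ exponentiates to a factor whose logarithm is essentially
\begin{equation*}
\sum_{p\in I_\ell}\frac{w_\ell(p)^2}{p}\Bigl(\tfrac12\bigl|{\textstyle\sum_j p^{-it_j}}\bigr|^2+\tfrac12\re\bigl({\textstyle\sum_j p^{-it_j}}\bigr)^2\Bigr)+(\text{prime-square part}),
\end{equation*}
and expansion of the bracket yields $k+\sum_i\cos(2t_i\log p)+2\sum_{i<j}\bigl(\cos((t_i-t_j)\log p)+\cos((t_i+t_j)\log p)\bigr)$. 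Recombining the ranges and invoking the standard asymptotic $\sum_{p\leq x}\cos(\alpha\log p)/p=\log g(\alpha)+O(1)$ in each of the three regimes of $\alpha$ produces the factor $(\log X)^{k/4}$ from the constant $k$, the factor $g(|t_i\pm t_j|)^{1/2}$ from the cross cosine sums, and $g(|2t_i|)^{1/4}$ from $\cos(2t_i\log p)$; the remaining $g(|2t_i|)^{1/2}$ comes from the prime-square term $\sum_{p\leq x^{1/2}}p^{-1-2it_i}$.

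The main obstacle is establishing $\sum_{p\leq x}\cos(\alpha\log p)/p=\log g(\alpha)+O(1)$ uniformly across the three regimes of \eqref{gDef} and across all scales $x=x_\ell$ appearing in Harper's splitting. In the large-shift regime this requires a Vinogradov-type cancellation estimate, and the hypothesis $|t_j|\leq X^A$ enters precisely there to keep $\log\log|\alpha|=O(\log\log X)$. A secondary technicality is selecting the H\"older exponents and expansion orders $m_\ell$ uniformly in the shifts so that the off-diagonal error from Lemma~\ref{sqrfree} remains subsumed after recombination across all ranges; this is routine once the one-variable Harper argument is in place. Combining these inputs yields the claimed bound with the exponents $1/2$ on cross shifts and $3/4$ on self-shifts.
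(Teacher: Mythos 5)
The paper does not actually prove this proposition: it is imported verbatim as a special case of Gao--Zhao \cite[Corollary 1.2]{GZ}, so there is no internal proof to compare against. Your sketch follows exactly the Soundararajan--Harper strategy that Gao--Zhao implement for this family, and your exponent bookkeeping is correct --- the factor $(\log X)^{k/4}$ from the constant $k/2$ in the diagonal second moment, $1/2$ on each cross shift $g(|t_i\pm t_j|)$, and $1/4+1/2=3/4$ on $g(|2t_i|)$ after adding the deterministic prime-square contribution --- and the length threshold $\ll X^{2/3}$ you impose on the Dirichlet polynomials is the right one for the unconditional off-diagonal bound of Lemma~\ref{sqrfree}. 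As an outline this is consistent with the cited proof, though the heavy lifting (uniform choice of Harper's parameters $K$, $m_\ell$ in the shifts, and the recombination of the off-diagonal errors across all ranges) is asserted rather than carried out.
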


\section{Proofs of the lower bounds} 
For any $Y\geq 1$ and any fundamental discriminant $d>0$, let \begin{equation}\label{def_carac}S_{\chi_d}(Y):= \sum_{n \leq Y} \chi_d(n).\end{equation}
 \subsection{Proof of Theorem \ref{lowquad}}
 We define for a fundamental discriminant $d$ the character sum  $$
M_{\varepsilon,Y}(\chi_d)= S_{\chi_d}(Y^{\varepsilon})
$$
and consider the following sums
\begin{equation}
\label{eq:S1S2}
\mathcal{S}_1= \sideset{}{^*}{\sum}_{0<d\leq X} S_{\chi_d}(Y) M_{\varepsilon,Y}(\chi_d)^{k-1} \mand \mathcal{S}_2 =\sideset{}{^*}{\sum}_{0<d\leq X} \vert M_{\varepsilon,Y}(\chi_d) \vert^k.
\end{equation} By H\"older inequality, we get
$$ \mathcal{S}_{1}^{k} \leq \mathcal{S}_2 ^{k-1} \sideset{}{^*}{\sum}_{0<d\leq X} \vert S_{\chi_d}(Y)\vert^k.
$$  Theorem \ref{lowquad} follows from Lemma~\ref{lem:1-2moments weight}  and Lemma~\ref{lem:1-2moments} which yield the lower bound $$\sideset{}{^*}{\sum}_{0<d\leq X} \vert S_{\chi_d}(Y) \vert^{k}  \gg XY^{k/2}(\log X)^{\frac{k(k-1)}{2}}.
$$

\begin{lemma}\label{lem:1-2moments weight} 
For any even integer $k \ge 2$ and a  sufficiently small $\varepsilon>0$, we have 
$$
\mathcal{S}_2 \ll X^{1+\varepsilon k/2}(\log X)^{\frac{k(k-1)}{2}}
$$ where the implied constant depends only on $\varepsilon$ and $k$.
 \end{lemma}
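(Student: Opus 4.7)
The plan is to open the $k$th power, swap the order of summation, and apply the ``orthogonality relation'' of Lemma \ref{sqrfree} term by term, separating the diagonal (square) contribution from the off-diagonal one.

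First, I would write
\[
\mathcal{S}_2 = \sideset{}{^*}{\sum}_{0<d\leq X} \Biggl(\sum_{n \leq Y^{\varepsilon}} \chi_d(n)\Biggr)^{\!k}
= \sum_{n_1,\dots,n_k \leq Y^{\varepsilon}} \sideset{}{^*}{\sum}_{0<d\leq X} \chi_d(n_1\cdots n_k),
\]
using that $k$ is even to avoid absolute values. Applying the unconditional part of Lemma \ref{sqrfree} to the inner sum, the tuples with $n_1\cdots n_k=\square$ contribute a main term and an error coming from the $O(X^{1/2}\tau(m))$ term, while the tuples with $n_1\cdots n_k$ not a square contribute an error $\ll X^{1/2}(n_1\cdots n_k)^{1/4}\log(n_1\cdots n_k)$. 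Writing $n=m^2$ in the square case, the factor $\prod_{p\mid m}(p/(p+1))$ is exactly $\prod_{p\mid n_1\cdots n_k}(1+1/p)^{-1}$, matching the weight appearing in Lemma \ref{count_squares}.

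Next, for the diagonal I would invoke Lemma \ref{count_squares} with $\bm\beta=(\varepsilon,\dots,\varepsilon)$, so $\alpha=\varepsilon k/2$. This yields
\[
\sum_{\substack{n_1\cdots n_k=\square \\ n_i\leq Y^{\varepsilon}}} \prod_{p\mid n_1\cdots n_k}\!\!\left(1+\tfrac{1}{p}\right)^{\!-1} \ll Y^{\varepsilon k/2}(\log Y)^{k(k-1)/2},
\]
so the diagonal contributes $\ll X\,Y^{\varepsilon k/2}(\log Y)^{k(k-1)/2}\ll X^{1+\varepsilon k/2}(\log X)^{k(k-1)/2}$, where at the end I use $Y\leq X$ (from the hypotheses of Theorem \ref{lowquad}) to absorb $Y^{\varepsilon k/2}$ into $X^{\varepsilon k/2}$.

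For the off-diagonal part, the bound is entirely routine: counting tuples by the size of $N:=n_1\cdots n_k\leq Y^{\varepsilon k}$ and using the trivial $\tau_k$-type estimate gives
\[
\sum_{\substack{n_1,\dots,n_k\leq Y^{\varepsilon} \\ n_1\cdots n_k\ne \square}} X^{1/2}(n_1\cdots n_k)^{1/4}\log(n_1\cdots n_k) \ll X^{1/2} Y^{5\varepsilon k/4+o(1)},
\]
and similarly the $O(X^{1/2}\tau(m))$ error from the square case contributes at most $X^{1/2}Y^{\varepsilon k/2+o(1)}$. Because the hypothesis of Theorem \ref{lowquad} gives $Y\ll X^{2/3-\alpha}$, choosing $\varepsilon$ small enough (say $\varepsilon<1/(2k)$) makes $X^{1/2}Y^{5\varepsilon k/4}=o(X^{1+\varepsilon k/2})$, which is absorbed in the main term.

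The only mildly delicate point is the interplay of the parameters: one must make sure that the power of $Y$ produced by the off-diagonal error stays below $X^{1/2+\varepsilon k/2}$, which forces $\varepsilon$ to be taken sufficiently small in terms of $k$ (and of the $\alpha$ appearing in the range of $Y$). Everything else is a direct application of Lemmas \ref{sqrfree} and \ref{count_squares}.
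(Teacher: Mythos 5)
Your proposal is correct and follows essentially the same route as the paper: expand the $k$-th power (using that $k$ is even), swap the order of summation, split according to whether $n_1\cdots n_k$ is a square, bound the non-square contribution via the $X^{1/2}n^{1/4}\log n$ estimate of Lemma \ref{sqrfree}, and bound the square contribution by Lemma \ref{count_squares} with $\bm{\beta}=(\varepsilon,\dots,\varepsilon)$. Your extra remarks on the $O(X^{1/2}\tau(m))$ error and on how small $\varepsilon$ must be are consistent with (and slightly more explicit than) the paper's treatment.
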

 
 \begin{proof}  
  We have \begin{align*}  
 \mathcal{S}_2  & =  \sideset{}{^*}{\sum}_{0<d\leq X} \,\,\sum_{x_1,\dots,x_k \leq Y^{\varepsilon}} \chi_d(x_1\cdots x_k)
   = \mathcal{S}_2 ^{sq} + \mathcal{S}_2^{nsq} \end{align*} where 
\begin{equation}
\label{eq:S2sq}
\mathcal{S}_2 ^{sq}= \sideset{}{^*}{\sum}_{0<d\leqslant X} \,\,\sum_{x_1,\dots,x_k \leq Y^{\varepsilon}\atop x_1\dots x_k=\square} \chi_d\left(\prod_{i=1}^{k}x_i\right)
\end{equation}  and
\begin{equation}
\label{eq:S2nsq}
\mathcal{S}_2^{nsq}=\sideset{}{^*}{\sum}_{0<d\leqslant X} \,\, \sum_{x_1,\dots,x_k \leq Y^{\varepsilon}\atop x_1\cdots x_k\neq\square} \chi_d\left(\prod_{i=1}^{k}x_i\right).
 \end{equation}  Switching the summation, using Lemma \ref{sqrfree} and the fact that $Y \leq X$, we have for any $\delta>0$  $$\mathcal{S}_2^{nsq} \ll X^{1/2} \sum_{x_1,\dots,x_k \leq Y^{\varepsilon}} (x_1\cdots x_k)^{1/4+\delta} \ll X^{1/2 +\frac{5k \varepsilon}{4}+k \varepsilon\delta}= X^{1/2+o(1)}.$$ Moreover, by Lemma \ref{count_squares} with $\beta_j = \varepsilon$ for all $1 \leq j \leq k$ we get $$\mathcal{S}_2^{sq} \ll X\sum_{x_1,\dots,x_k \leq Y^{\varepsilon}\atop x_1\cdots x_k=\square} \prod_{p\mid x_1\cdots x_k} (1+1/p)^{-1} \ll XY^{\varepsilon k/2}(\log X)^{\frac{k(k-1)}{2}}. $$ 
 \end{proof}

 \begin{lemma}\label{lem:1-2moments} 
 Let $X,Y$ such that $X^{\varepsilon} \ll Y \ll X^{2/3-\alpha}$ for some $\alpha>0$.
For any integer $k \ge 1$ and a  sufficiently small $\varepsilon>0$, we have 
$$
\mathcal{S}_{1}\gg XY^{1/2+\varepsilon \frac{(k-1)}{2}}(\log X)^{\frac{k(k-1)}{2}}
$$ where the implied constant depends only on $\varepsilon$ and $k$. Assuming GRH, the result remains true under the weaker hypothesis $X^{\varepsilon} \ll Y \ll X^{1-\alpha}$ for some $\alpha>0$.
 \end{lemma}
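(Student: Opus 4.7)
The plan is to expand $\mathcal{S}_1$ as a multiple character sum, pull the sum over $d$ inside, and split the resulting count according to whether the argument of $\chi_d$ is a perfect square. The diagonal (square) contribution will produce the main term via Lemma~\ref{count_squares}, while the non-square contribution will be controlled using the bounds of Lemma~\ref{sqrfree} within the stated ranges of $Y$.

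Concretely, I would first write
\[
\mathcal{S}_1 = \sum_{n\leq Y}\sum_{x_1,\dots,x_{k-1}\leq Y^{\varepsilon}} \sideset{}{^*}{\sum}_{0<d\leq X} \chi_d(n x_1 \cdots x_{k-1}) = \mathcal{S}_1^{sq} + \mathcal{S}_1^{nsq},
\]
where $\mathcal{S}_1^{sq}$ collects the tuples with $n x_1\cdots x_{k-1} = \square$ and $\mathcal{S}_1^{nsq}$ collects the rest (for $k=1$ the vector $(x_1,\dots,x_{k-1})$ is absent and the analysis below simplifies). For $\mathcal{S}_1^{nsq}$, Lemma~\ref{sqrfree} yields the unconditional bound
\[
\mathcal{S}_1^{nsq} \ll X^{1/2}\sum_{n\leq Y}n^{1/4}(\log n)\prod_{i=1}^{k-1}\sum_{x_i\leq Y^{\varepsilon}}x_i^{1/4} \ll X^{1/2} Y^{5/4 + O(\varepsilon k)} (\log Y)^{O(1)},
\]
which is negligible compared with the target $XY^{1/2 + \varepsilon(k-1)/2}(\log X)^{k(k-1)/2}$ exactly when $Y \ll X^{2/3-\alpha}$ and $\varepsilon$ is chosen small enough. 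Under GRH, the sharper bound $\sideset{}{^*}{\sum}_d\chi_d(N)\ll X^{1/2+\varepsilon'}N^{\varepsilon'}$ gives $\mathcal{S}_1^{nsq}\ll X^{1/2+\varepsilon'}Y^{1+O(\varepsilon'k)}$, which is admissible in the extended range $Y \ll X^{1-\alpha}$.

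For the main term, Lemma~\ref{sqrfree} produces
\[
\mathcal{S}_1^{sq} = \frac{6X}{\pi^2}\sum_{\substack{n\leq Y,\;x_i\leq Y^{\varepsilon}\\ nx_1\cdots x_{k-1}=\square}} \prod_{p\mid nx_1\cdots x_{k-1}}\!\left(1+\tfrac{1}{p}\right)^{-1} + O\!\left(X^{1/2}Y^{1+O(\varepsilon k)}\right).
\]
I would then apply Lemma~\ref{count_squares} in $k$ variables with exponents $\bm{\beta}=(1,\varepsilon,\dots,\varepsilon)$, so that $\alpha = \tfrac{1}{2} + \varepsilon(k-1)/2$ and the resulting polynomial $Q_{\bm{\beta}}$ has degree exactly $k(k-1)/2$ with strictly positive leading coefficient. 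Combining this with $Y \gg X^{\varepsilon}$, so that $\log Y \gg \log X$, gives
\[
\mathcal{S}_1^{sq} \gg X\, Y^{1/2 + \varepsilon(k-1)/2}\, (\log X)^{k(k-1)/2},
\]
which dominates both the non-square error and the $O(X^{1/2}\tau(m))$ tail coming from Lemma~\ref{sqrfree}.

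The main obstacle is ensuring the positivity of the leading coefficient of $Q_{\bm{\beta}}$: this is provided by Lemma~\ref{thm:breteche2}, since $H(\bm{0})$ is positive from the Euler product (every local factor is a finite product of positive quantities at $\bm{s}=\bm{1/2}$), and the associated polytope integral is strictly positive because the polytope contains a neighbourhood of the origin. A secondary but routine verification is that both error contributions are absorbed by the main term; this is precisely what fixes the ranges $Y \ll X^{2/3-\alpha}$ (unconditional) and $Y \ll X^{1-\alpha}$ (under GRH).
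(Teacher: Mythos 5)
Your proposal is correct and follows essentially the same route as the paper: expand $\mathcal{S}_1$, split according to whether $nx_1\cdots x_{k-1}$ is a square, apply Lemma~\ref{sqrfree} to the inner sum over $d$ in each case, and invoke Lemma~\ref{count_squares} with $\bm{\beta}=(1,\varepsilon,\dots,\varepsilon)$ for the diagonal main term, with the same range restrictions arising from the non-square error. Your explicit attention to the positivity of the leading coefficient (via $H(\bm{0})>0$ and the polytope volume) is a point the paper defers to its Lemma~\ref{leading_coefficient}, but the substance is identical.
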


 \begin{proof}  
Expanding the summations and proceeding as in the proof of Lemma \ref{lem:1-2moments weight}, we split the summation into two sums depending on whether the product of the variables is a square or not. We obtain $$\mathcal{S}_1=\mathcal{S}_1^{sq} + \mathcal{S}_1^{nsq}$$ where we have 
\begin{align*}  
\mathcal{S}_1^{sq}:&=\sideset{}{^*}{\sum}_{0<d\leq X} \ssum_{\substack{n\leq  Y\\ x_1,\ldots,x_{k-1}\leq Y^{\varepsilon} \\ nx_1\cdots x_{k-1}=\square}} \chi_d(nx_1\cdots x_{k-1}) \end{align*} and \begin{align*}
\mathcal{S}_1^{nsq}:=\sideset{}{^*}{\sum}_{0<d\leq X} \ssum_{\substack{n\leq  Y \\ x_1,\ldots,x_{k-1}\leq Y^{\varepsilon} \\ nx_1\cdots x_{k-1} \neq \square}} \chi_d(nx_1\cdots x_{k-1}). \end{align*} On one hand by Lemma \ref{sqrfree} we obtain
\begin{align*}  
\mathcal{S}_1^{sq} & \gg X\ssum_{\substack{n\leq  Y \\ x_1,\ldots,x_{k-1}\leq Y^{\varepsilon} \\ nx_1\cdots x_{k-1}=\square}} \prod_{p\mid nx_1\cdots x_{k-1}} (1+1/p)^{-1} \gg XY^{1/2+\varepsilon \frac{(k-1)}{2}}(\log Y)^{\frac{k(k-1)}{2}}
\end{align*} where we again applied Lemma \ref{count_squares}. On the other hand, if $nx_1\dots x_{k-1} \neq \square$,  Lemma \ref{sqrfree} implies that for any $\delta>0$, the following upper bound holds
$$ \sideset{}{^*}{\sum}_{0<d\leq X} \chi_d(nx_1\cdots x_{k-1}) \ll X^{1/2}  (nx_1\cdots x_{k-1})^{1/4+\delta}.$$ Thus we get,
\begin{align*} \mathcal{S}_1^{nsq}& \ll X^{1/2}\ssum_{\substack{n\leq  Y \\ x_1,\ldots,x_{k-1}\leq Y^{\varepsilon}}}(nx_1\cdots x_{k-1})^{1/4+\delta} \\
&  \ll X^{1/2}Y^{5/4+\frac{5(k-1)\varepsilon}{4}+(k-1)\epsilon \delta +\delta } \ll Y^{1/2}X^{1-\alpha} \end{align*} for some $\alpha>0$. This concludes the unconditional proof. \\
Under GRH, we proceed in the same way until the last step where we appeal to the second part of Lemma \ref{sqrfree} to get 
$$ \mathcal{S}_1^{nsq} \ll X^{1/2+\varepsilon} \ssum_{\substack{n\leq  Y \\ x_1,\ldots,x_{k-1}\leq Y^{\varepsilon}}}(nx_1\cdots x_{k-1})^{\delta} \ll X^{1/2+\varepsilon} Y^{1+\delta+(k-1)\varepsilon(1+\delta)}$$ which is negligible compared to $\mathcal{S}_1^{sq} $ if $Y \ll X^{1-\alpha}$ for some $\alpha>0$.
\end{proof}

\subsection{Proof of Theorem \ref{lowtheta}}
In order to lower bound $\mathcal{S}_1$, we use the following approximation of 
$\vartheta(1,\chi_d)$ by a truncated sum, which easily
follows from estimating the tail via the corresponding 
integral. 

\begin{lemma}
\label{lem:approx}
Let $\delta>0$ be a positive number.
 Then 
$$
\vartheta(1,\chi_d)=\sum_{n\leq d^{1/2+\delta}} 
\chi_d(n)e^{-\pi n^2/d} + O(d^{1/2}e^{-d^{\delta}}).
$$
\end{lemma}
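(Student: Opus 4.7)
\textbf{Proof plan for Lemma \ref{lem:approx}.} The statement is a routine tail estimate for a rapidly decaying series, with no arithmetic input needed beyond the trivial bound $|\chi_d(n)|\leq 1$. My plan is to split the defining sum $\vartheta(1,\chi_d)=\sum_{n\geq 1}\chi_d(n)e^{-\pi n^2/d}$ at the threshold $N:=d^{1/2+\delta}$, keep the head as the main term, and show that the tail contributes at most $O(d^{1/2}e^{-d^\delta})$.

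First, I would apply the triangle inequality together with $|\chi_d(n)|\leq 1$ to bound the tail by
\[
\Bigl|\sum_{n>N}\chi_d(n)e^{-\pi n^2/d}\Bigr|\leq \sum_{n>N}e^{-\pi n^2/d}.
\]
Since $x\mapsto e^{-\pi x^2/d}$ is decreasing on $[N,\infty)$, the standard integral comparison gives
\[
\sum_{n>N}e^{-\pi n^2/d}\leq \int_{N}^{\infty}e^{-\pi x^2/d}\,dx.
\]
Substituting $u=x/\sqrt d$, and recalling that $N/\sqrt d=d^{\delta}$, this integral equals $\sqrt d\int_{d^\delta}^{\infty}e^{-\pi u^2}\,du$. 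The classical Gaussian tail estimate $\int_T^\infty e^{-\pi u^2}\,du\ll e^{-\pi T^2}/T$ then yields a bound of order $d^{1/2-\delta}e^{-\pi d^{2\delta}}$.

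Finally, for $d$ large enough that $d^\delta\geq 1/\pi$, we have $\pi d^{2\delta}\geq d^\delta$, so $e^{-\pi d^{2\delta}}\leq e^{-d^\delta}$, and the whole tail is $O(d^{1/2}e^{-d^\delta})$, matching the claimed error term. This is essentially the only step; there is no substantive obstacle, only a choice of how crudely to bound the Gaussian tail so that the exponent matches the form $e^{-d^\delta}$ stated in the lemma.
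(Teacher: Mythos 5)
Your proposal is correct and is exactly the argument the paper has in mind: the lemma is stated as "easily follows from estimating the tail via the corresponding integral," which is precisely your integral-comparison plus Gaussian-tail computation. The final weakening of $e^{-\pi d^{2\delta}}$ to $e^{-d^{\delta}}$ is handled correctly, so nothing is missing.
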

We proceed as in the proof of Theorem \ref{lowquad} and let 
$ M_{\varepsilon,X}(\chi_d)= S_{\chi_d}(X^{\epsilon}).$ Consider the following sums\begin{equation}
\label{eq:S1S2theta}
\mathcal{S}_1= \sideset{}{^*}{\sum}_{X/2<d\leq X} \vartheta(1,\chi_d) M_{\varepsilon,X}(\chi_d)^{k-1} \mand \mathcal{S}_2 =\sideset{}{^*}{\sum}_{X/2<d\leq X} \vert M_{\varepsilon,X}(\chi_d) \vert^k.
\end{equation} By H\"older inequality, we get
$$ \mathcal{S}_{1}^{k} \leq \mathcal{S}_2 ^{k-1} \sideset{}{^*}{\sum}_{X/2<d\leq X} \vert \vartheta(1,\chi_d)\vert^k.
$$ Theorem \ref{lowtheta} follows from Lemma~\ref{lem:1-2moments weight}  and Lemma~\ref{lem:1-2moments-theta} below.

 \begin{lemma}\label{lem:1-2moments-theta} 
For any integer $k \ge 2$ and a  sufficiently small $\varepsilon>0$, we have 
$$
\mathcal{S}_{1}\gg X^{5/4+\varepsilon\frac{(k-1)}{2}}(\log X)^{\frac{k(k-1)}{2}}.
$$ where the implied constant depends only on $\varepsilon$ and $k$.
 \end{lemma}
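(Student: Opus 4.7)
The plan is to mimic the argument of Lemma \ref{lem:1-2moments}, using Lemma \ref{lem:approx} to approximate $\vartheta(1,\chi_d)$ by a truncated character sum weighted by the Gaussian $e^{-\pi n^2/d}$, and then to split the resulting quantity into a square and a non-square contribution. First, for a small fixed $\delta>0$, I would invoke Lemma \ref{lem:approx} to replace $\vartheta(1,\chi_d)$ by $\sum_{n\leq d^{1/2+\delta}} \chi_d(n) e^{-\pi n^2/d}$ at a negligible cost: multiplying the error by the trivial bound $|M_{\varepsilon,X}(\chi_d)|^{k-1} \ll X^{\varepsilon(k-1)}$ and summing over $d\in(X/2,X]$ yields a total contribution of only $O\bigl(X^{O(1)} e^{-(X/2)^{\delta}}\bigr)$. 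Expanding $M_{\varepsilon,X}(\chi_d)^{k-1} = \sum_{x_1,\dots,x_{k-1}\leq X^{\varepsilon}}\chi_d(x_1\cdots x_{k-1})$ and switching orders of summation, I would then split $\mathcal{S}_1=\mathcal{S}_1^{sq}+\mathcal{S}_1^{nsq}$ according to whether $m:=nx_1\cdots x_{k-1}$ is a perfect square.

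For the main term, I would further restrict the $n$-sum to $n\leq \sqrt X$, which loses nothing for a lower bound and ensures $e^{-\pi n^2/d}\geq e^{-2\pi}$ uniformly for $d\in(X/2,X]$. Applying the square case of Lemma \ref{sqrfree} to the inner sum $\sideset{}{^*}{\sum}_{X/2<d\leq X}\chi_d(m)$ then yields a main contribution of size $\gg X\prod_{p\mid m}(p/(p+1))$, with the error $O(X^{1/2}\tau(\sqrt m))$ being absorbed since the number of valid tuples is at most $X^{1/2+\varepsilon(k-1)}$. Invoking Lemma \ref{count_squares} with $\bm{\beta}=(1/2,\varepsilon,\dots,\varepsilon)$ and $Y=X$ produces
$$\mathcal{S}_1^{sq} \gg X \cdot X^{1/4+\varepsilon(k-1)/2}(\log X)^{\frac{k(k-1)}{2}} = X^{5/4+\varepsilon(k-1)/2}(\log X)^{\frac{k(k-1)}{2}}.$$

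For the non-square contribution, I would bound $e^{-\pi n^2/d}\leq e^{-\pi n^2/X}$ (valid since $d\leq X$) and use either partial summation in $d$ or dyadic decomposition to reduce to the first part of Lemma \ref{sqrfree}. This yields
$$\mathcal{S}_1^{nsq} \ll X^{1/2}\log X\Bigl(\sum_{n\geq 1}e^{-\pi n^2/X}n^{1/4}\Bigr)\prod_{i=1}^{k-1}\sum_{x_i\leq X^{\varepsilon}} x_i^{1/4} \ll X^{9/8+5\varepsilon(k-1)/4+o(1)},$$
after noting that $\sum_n e^{-\pi n^2/X}n^{1/4}\ll X^{5/8}$. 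For $\varepsilon$ sufficiently small (any $\varepsilon<1/(6(k-1))$ suffices), this is negligible compared to $\mathcal{S}_1^{sq}$, completing the proof.

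The main technical difficulty I anticipate is handling the $d$-dependence of the Gaussian weight $e^{-\pi n^2/d}$ when exchanging orders of summation, which prevents a direct application of Lemma \ref{sqrfree}. The proposed remedy is to exploit the fact that on the main-term side one only needs a pointwise lower bound on the weight (obtained by restricting to $n\leq\sqrt X$), while on the error-term side one only needs a pointwise upper bound (via $e^{-\pi n^2/d}\leq e^{-\pi n^2/X}$), both of which are straightforward.
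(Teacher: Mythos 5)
Your proposal is correct and follows essentially the same route as the paper: truncate $\vartheta(1,\chi_d)$ via Lemma \ref{lem:approx}, split into square and non-square parts, restrict to $n\leq\sqrt{X}$ and apply Lemma \ref{sqrfree} together with Lemma \ref{count_squares} (with $\bm{\beta}=(1/2,\varepsilon,\dots,\varepsilon)$) for the main term, and use partial summation plus the non-square case of Lemma \ref{sqrfree} for the error. The only differences are cosmetic — you retain the Gaussian decay in the non-square bound (getting $X^{9/8+\cdots}$ rather than the paper's $X^{59/48+\cdots}$, both negligible) and use a generic truncation exponent instead of the paper's $\delta=1/12$.
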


 \begin{proof}  
By Lemma \ref{lem:approx} with $\delta=1/12$, we have \begin{equation}\mathcal{S}_{1} = \sideset{}{^*}{\sum}_{X/2<d\leq X} \left(\sum_{n\leq d^{7/12}}\chi_d(n)e^{-\pi n^2/d}\right) M_{\varepsilon,X}(\chi_d)^{k-1} + R(X) \end{equation} with 
\begin{align*} R(X):= &\sideset{}{^*}{\sum}_{X/2<d\leq X} \vert M_{\varepsilon,X}(\chi_d)\vert^{k-1}d^{1/2}e^{-d^{1/12}} \ll X^{3/2+\varepsilon (k-1)}e^{-X^{1/12}}\ll 1.
\end{align*} As in the proof of Lemma \ref{lem:1-2moments weight}, we split the summation into two sums depending on whether the product of the variables is a square or not.
On one hand, we have 
\begin{align*}  
\mathcal{S}_1^{sq}:&=\sideset{}{^*}{\sum}_{X/2<d\leq X}   \ssum_{\substack{n\leq  d^{7/12} \\ x_1,\ldots,x_{k-1}\leq X^{\varepsilon} \\ nx_1\cdots x_{k-1}=\square}} e^{-\pi n^2/d}\chi_d(nx_1\cdots x_{k-1}). \end{align*} Restricting the sum to $n\leq X^{1/2}$ and using Lemma \ref{count_squares} leads to \begin{align*}  
\mathcal{S}_1^{sq} &\gg \ssum_{\substack{n\leq  X^{1/2} \\ x_1,\ldots,x_{k-1}\leq X^{\varepsilon} \\ nx_1\cdots x_{k-1}=\square}} \sideset{}{^*}{\sum}_{0<d\leq X} \chi_d(nx_1\cdots x_{k-1}) \\
& \gg X\ssum_{\substack{n\leq  X^{1/2} \\ x_1,\ldots,x_{k-1}\leq X^{\varepsilon} \\ nx_1\cdots x_{k-1}=\square}} \prod_{p\mid nx_1\cdots x_{k-1}} (1+1/p)^{-1} \gg X^{5/4+\varepsilon \frac{(k-1)}{2}}(\log X)^{\frac{k(k-1)}{2}}.
\end{align*} On the other hand
\begin{align*}
\mathcal{S}_1^{nsq}:=\sideset{}{^*}{\sum}_{X/2<d\leq X}   \ssum_{\substack{n\leq  d^{7/12} \\ x_1,\ldots,x_{k-1}\leq X^{\varepsilon} \\ nx_1\cdots x_{k-1} \neq \square}} e^{-\pi n^2/d}\chi_d(nx_1\cdots x_{k-1}). \end{align*} By partial summation and Lemma \ref{sqrfree}, we see that for $nx_1\cdots x_{k-1} \neq \square$ and any $\delta>0$, the following upper bound holds
$$  \sideset{}{^*}{\sum}_{n^{12/7}<d\leq X}\chi_d(nx_1\cdots x_{k-1}) e^{-\pi n^2/d} \ll X^{1/2}  (nx_1\cdots x_{k-1})^{1/4+\delta}.$$ Thus,
$$ \mathcal{S}_1^{nsq} \ll X^{1/2}\ssum_{\substack{n\leq  X^{7/12} \\ x_1,\ldots,x_{k-1}\leq X^{\varepsilon}}}(nx_1\cdots x_{k-1})^{1/4+\delta} \ll X^{59/48+\frac{5(k-1)\varepsilon}{4}+(k-1)\epsilon \delta+\delta } \ll X^{5/4-\alpha}$$ for some $\alpha>0$. This concludes the proof.
\end{proof}

\section{Proof of the asymptotic formula}
We determine the leading coefficient of the polynomial $Q_{\bm{\beta}}$ of Lemma \ref{count_squares}. The leading coefficient of the average of quadratic twists of the M\"obius function was also given by the second named author in~\cite{Toma25}. For a wide class of multivariable arithmetic functions, Essouabri, Salinas Zavala and T\'oth~\cite{EST22} gave the leading coefficient of asymptotic behavior of their multiple averages.

Let $r=k(k-1)/2$ and let $\mathcal{H} = \left(h^{(i)}\right)_{1 \leq i \leq r}$ be a subfamily of $\mathcal{L}_0 = \left(l^{(i)}\right)_{1 \leq i \leq r+k}$ defined by $\mathcal{L}_0 \setminus \{ 2\bm{e}_1, \dots, 2\bm{e}_k \}$. 

\begin{lemma}\label{leading_coefficient}
Let $k\geq 2$. The leading coefficient of the polynomial $Q_{\bold{\beta}}$ in Lemma \ref{count_squares} is given by
\begin{equation*}
    \prod_{p} \frac{\left(1- \frac{1}{p} \right)^{\frac{k(k+1)}{2}}}{1+\frac{1}{p}} \left(\frac{1}{p}+\frac{1}{2}\left[\left(1-\frac{1}{\sqrt{p}}\right)^{-k}+\left(1+\frac{1}{\sqrt{p}}\right)^{-k} \right]\right) \mathcal{I}_{\bm{\beta}}
\end{equation*}
where 
\begin{align}
\label{I_beta}
    \mathcal{I}_{\bm{\beta}} &= \int \cdots\int_{\mathcal{A}_{\bm{\beta}}} du_1\cdots du_{r},
\end{align}
and
\[
\mathcal{A}_{\bm{\beta}} = \left\{ (u_1,\dots,u_{r}) \in [0,\infty)^{r} \middle| \sum_{i=1}^{r} h^{(i)}(\bm{e}_j) u_i \leq \beta_j \text{ for all } 1 \leq j \leq r \right\}.
\]
\end{lemma}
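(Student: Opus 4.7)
The plan is to apply Lemma \ref{thm:breteche2}(1) to the Dirichlet series $F$ and its companion $H$ already constructed in the proof of Lemma \ref{count_squares}. Once the hypotheses are verified, that lemma expresses the leading coefficient of $Q_{\bm{\beta}}$ as the coefficient of $(\log x)^{\rho}$ in the asymptotic of $H(\bm{0}) x^{-\langle \bm{c},\bm{\beta}\rangle}\mathcal{I}_{\bm{\beta}}(x)$, where here $\bm{c}=\bm{1/2}$, $w=0$, $q=k+r$, and $\rho=q-k=r$. So the work splits into two computations: evaluating $H(\bm{0})$, and extracting the leading power of $\log x$ from $\mathcal{I}_{\bm{\beta}}(x)$.

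First I would verify conditions (C1) and (C2). For (C1), recall that
$$H(\bm{s})=\prod_{j=1}^k\zeta(2s_j+1)(2s_j)\prod_{1\leq a<b\leq k}\zeta(s_a+s_b+1)(s_a+s_b)\cdot E(\bm{s}+\bm{1/2}).$$
Each $\zeta$-factor is already expressed as a function of a single form $l^{(i)}(\bm{s})$, while the Euler product $E(\bm{s}+\bm{1/2})$ is holomorphic and nonvanishing in a neighborhood of $\bm{0}$, and its local factors only involve the numbers $p^{-l^{(i)}(\bm{s})}$ (powers of one variable $p^{-2s_j}$ and cross terms $p^{-(s_a+s_b)}$), which supplies the required function $G$. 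For (C2) I would check that $\mathcal{B}=\sum_j\beta_j\bm{e}_j^*=\sum_j(\beta_j/2)(2\bm{e}_j^*)$ lies in $\mathrm{con}^*\{l^{(1)},\dots,l^{(q)}\}$; the minimality condition is automatic since the forms $\{2\bm{e}_j^*\}$ already constitute a basis, and removing any $l^{(i)}$ strictly decreases $\#\mathcal{L}'-\mathrm{Rank}(\mathcal{L}')$.

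Next I would evaluate $H(\bm{0})$. Because $\zeta(s+1)s\to 1$ as $s\to 0$, each of the $q$ analytic-continuation factors contributes $1$, so $H(\bm{0})=E(\bm{1/2})$. To put $E(\bm{1/2})$ into the stated Euler product form, I would use the generating function identity
$$\sum_{\substack{v_1,\dots,v_k\geq 0\\ v_1+\cdots+v_k\text{ even}}}x^{v_1+\cdots+v_k}=\frac{1}{2}\bigl[(1-x)^{-k}+(1+x)^{-k}\bigr]$$
at $x=1/\sqrt{p}$. Combining with the normalizing factor $(1-1/p)^k(1-1/p)^{k(k-1)/2}=(1-1/p)^{k(k+1)/2}$ at $\bm{s}=\bm{1/2}$ and rearranging $1-\tfrac{p}{p+1}=\tfrac{1}{p+1}$ produces precisely the Euler product in the statement.

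Finally I would extract the volume. Since $l^{(i)}(\bm{1/2})=1$ for every $i$, the weight in $\mathcal{I}_{\bm{\beta}}(x)$ is trivial and $\mathcal{I}_{\bm{\beta}}(x)$ equals the Euclidean volume of $\mathcal{A}_{\bm{\beta}}(x)\subset[1,\infty)^q$. After the change of variables $y_i=x^{u_i}$, splitting the $q$ coordinates into $k$ variables $v_j$ (from the forms $2s_j$) and $r$ variables $u_i$ (from the forms $s_a+s_b$), the constraint reads $2v_j+\sum_i h^{(i)}(\bm{e}_j)u_i\leq \beta_j$. Integrating out each $v_j$ yields a factor $\bigl(x^{(\beta_j-\sum_i h^{(i)}(\bm{e}_j)u_i)/2}-1\bigr)/\log x$; keeping the leading term and using that $\sum_j h^{(i)}(\bm{e}_j)=2$ for every $i$, the $x$-exponent collapses to the constant $\langle\bm{c},\bm{\beta}\rangle$, and the region of integration for the remaining $u_i$'s is exactly $\mathcal{A}_{\bm{\beta}}$. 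This gives
$$\mathcal{I}_{\bm{\beta}}(x)=(\log x)^{q-k}x^{\langle\bm{c},\bm{\beta}\rangle}\mathcal{I}_{\bm{\beta}}+O\bigl(x^{\langle\bm{c},\bm{\beta}\rangle}(\log x)^{r-1}\bigr),$$
so the leading coefficient of $Q_{\bm{\beta}}$ is $E(\bm{1/2})\mathcal{I}_{\bm{\beta}}$. The most delicate step is the verification of (C1) and the minimality half of (C2), which is bookkeeping on the linear forms; once that is settled, the Euler-product computation is a direct generating-function manipulation and the volume identification reduces to the elementary integration just sketched.
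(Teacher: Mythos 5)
Your proposal follows the paper's proof essentially step for step: verify (C1)--(C2), apply Lemma \ref{thm:breteche2}(1), identify $H(\bm{0})=E(\bm{1/2})$ via the even-sum generating function $\frac{1}{2}[(1-x)^{-k}+(1+x)^{-k}]$ (the paper uses the equivalent identity $\sum_n\binom{n+k}{k}x^n=(1-x)^{-k-1}$), and compute $\mathcal{I}_{\bm{\beta}}(x)$ by integrating out the $k$ coordinates attached to the forms $2\bm{e}_j^*$ and substituting $y_{k+i}=x^{u_i}$. Two steps are asserted rather than proved, and one of them is where the paper does most of its work. First, your justification of the minimality half of (C2) only treats subfamilies obtained by deleting a single form; a general $\mathcal{L}'$ with $m$ forms removed could keep $\#\mathcal{L}'-\mathrm{Rank}(\mathcal{L}')=r$ if each removal dropped the rank, and one must rule this out (as the paper does) by noting that any subfamily of deficient rank is too small to achieve the required difference. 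Second, and more substantially, after integrating out the $v_j$'s you get $\prod_j\bigl(x^{(\beta_j-\sum_i h^{(i)}(\bm{e}_j)u_i)/2}-1\bigr)$, and "keeping the leading term" requires showing that each cross term, which is $x^{\sum_{j\notin S}\beta_j/2}$ times the integral of $\exp\bigl(\tfrac{\log x}{2}\sum_{j\in S}\sum_i h^{(i)}(\bm{e}_j)u_i\bigr)$ over the polytope, loses at least one power of $\log x$ relative to the main term; this is not automatic, since the exponential can recover the missing powers of $x^{\beta_j/2}$ on the boundary of the polytope, and the paper spends roughly half of its proof on a careful iterated integration (isolating a variable $u_{i_{k-1}}$ and comparing the resulting boundary contributions) to establish exactly this. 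Your outline is correct and would compile into the paper's proof, but that error-term analysis is the genuinely nontrivial content you have compressed into three words.
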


\begin{proof}
We denote the leading coefficient of $Q_{\bm{\beta}}$ by $C(k)$. We can easily check that assumptions (C1) and (C2) in Lemma \ref{thm:breteche2} (1) are satisfied. In fact, by the definition of (\ref{H(s)}), it is clear that such a function $G$ exists. Also, for $\mathcal{L}_0 =\left( l^{(i)}\right)_{1\leq i \leq r+k}$, we already showed that $Rank(\mathcal{L}_0)=k$ and $\#\mathcal{L}_0= \binom{k+1}{2}$. We need to show that there is no subfamily $\mathcal{L}^\prime$ of $\mathcal{L}_0$ such that $\mathcal{L}^\prime \neq \mathcal{L}_0$, $\mathcal{B} \in Vect(\mathcal{L}^\prime)$ and $\#\mathcal{L}^\prime-Rank(\mathcal{L}^\prime)=\frac{k(k-1)}{2}$. If such a family exists, then we must have $n=\#\mathcal{L}^\prime \geq \frac{k(k-1)}{2}$. Let us first assume that all forms $e_i^*+e_j^*, 1\leq i<j \leq k$ lie in $\mathcal{L}^\prime$. This would imply $Rank(\mathcal{L}^\prime)=k$ and $\#\mathcal{L}^\prime-Rank(\mathcal{L}^\prime)<\frac{k(k+1)}{2}-k=\frac{k(k-1)}{2}$. Hence, we can assume that $\mathcal{L}^\prime$ consists of $\ell>n-\frac{k(k-1)}{2}$ linear forms of type $e_i^*$ and $n-\ell<\frac{k(k-1)}{2}$ forms of type $e_i^*+e_j^*$.
Thus, we have $n-Rank(L') \leq n-\ell < \frac{k(k-1)}{2}$ and (C2) is verified.

Hence, applying Lemma \ref{thm:breteche2} (1) and using that $l^{(i)}(\bm{c})=1$ for all $1 \leq i \leq r+k$, we have 
\begin{align*}
    \mathcal{I}_{\bm{\beta}} &= \lim_{x \to \infty} \frac{1}{x^{\frac{\sum_{i=1}^{k}\beta_i}{2}} (\log x)^{\frac{k(k-1)}{2}}}\int_{\mathcal{A}_{\bm{\beta}}(x)} dy_1\cdots dy_{r+k},
\end{align*}
where
\[
\mathcal{A}_{\bm{\beta}}(x) = \left\{ (y_1,\dots,y_{r+k}) \in [1,\infty)^{r+k} \middle| \prod_{i=1}^{r+k} y_{i}^{l^{(i)}(\bm{e}_j)} \leq x^{\beta_j} \text{ for all } 1 \leq j \leq k \right\}.
\]
Here, $l^{(i)}(\bm{e}_j)=2$ for a linear form, $l^{(i)}(\bm{e}_j)=1$ for $k-1$ linear forms in $\mathcal{L}$, and $l^{(i)}(\bm{e}_j)=0$ otherwise. So, after reindexing and rearranging $y_j$'s, we have
\begin{align*}
     \prod_{i=1}^{r+k} y_{i}^{l^{(i)}(\bm{e}_j)} &=y_j^2 y_{k+1}^{a_{k+1}(j)} \cdots y_{r+k}^{a_{r+k}(j)}
\end{align*}
for $1 \leq j \leq k$, where $a_{k+1}(j), \dots, a_{r+k}(j) \in \{0,1 \}$ and $\# \{ i \mid a_i(j)=1\} =k-1$. Hence, letting $\bm{y}=(y_1,\dots,y_{r+k})$, we have
\begin{align*}
    \int_{\mathcal{A}_{\bm{\beta}}(x)} d\bm{y} &= \int \cdots \int_{\substack{y_{k+1}, \dots, y_{r+k} \in [1,\infty)\\ \prod_{i=k+1}^{r+k} y_{i}^{a_{i}(j)}  \leq x^{\beta_j} (\forall j)}} \left( \prod_{j=1}^k \int_{1 \leq y_j \leq \sqrt{\frac{x^{\beta_j}}{y_{k+1}^{a_{k+1}(j)} \cdots y_{r+k}^{a_{r+k}(j)}}}} dy_j \right) dy_{r+k} \cdots dy_{k+1} \\
    &=\int \cdots\int_{\substack{y_{k+1}, \dots, y_{r+k} \in [1,\infty)\\ \prod_{i=k+1}^{r+k} y_{i}^{a_{i}(j)}  \leq x^{\beta_j} (\forall j)}} \prod_{j=1}^k \left(  \sqrt{\frac{x^{\beta_j}}{y_{k+1}^{a_{k+1}(j)} \cdots y_{r+k}^{a_{r+k}(j)}}} -1 \right) dy_{r+k} \cdots dy_{k+1}.
\end{align*} Noting that $\# \{ j \mid a_i(j)=1\} =2$ for all $k+1 \leq i \leq r+k$, the integrand in the above can be written as
\begin{align*}
\prod_{j=1}^k \left(  \sqrt{\frac{x^{\beta_j}}{y_{k+1}^{a_{k+1}(j)} \cdots y_{r+k}^{a_{r+k}(j)}}} -1 \right) & = \frac{x^{\sum_{j=1}^k \frac{\beta_j}{2}}}{y_{k+1}\cdots y_{r+k}}\left( 1 + \sum_{d=1}^k (-1)^d R(j_1,\dots,j_d)\right).
\end{align*}
where \begin{equation}\label{def-ji}
    R(j_1,\dots,j_d):= \frac{\sqrt{\prod_{i=1}^r y_{k+i}^{h^{(i)}(\bm{e}_{j_1})+\dots+h^{(i)}(\bm{e}_{j_d})}}}{x^{\frac{\beta_{j_1}}{2}+\dots+\frac{\beta_{j_d}}{2}}}.
\end{equation}
Integrating the first term into the brackets gives us the main term 
\begin{align*}
    & \int \cdots\int_{\substack{y_{k+1}, \dots, y_{r+k} \in [1,\infty)\\ y_{k+1}^{a_{k+1}(j)} \cdots y_{r+k}^{a_{r+k}(j)} \leq x^{\beta_j} (\forall j)}} \prod_{j=1}^k \left(  \sqrt{\frac{x^{\beta_j}}{y_{k+1}^{a_{1}(j)} \cdots y_{r+k}^{a_{r+k}(j)}}} \right) dy_{r+k} \cdots dy_{k+1} \\
    &= x^{\sum_{1 \leq j \leq k}\frac{\beta_j}{2}}\int \cdots\int_{\substack{y_{k+1}, \dots, y_{r+k} \in [1,\infty)\\ y_{k+1}^{a_{k+1}(j)} \cdots y_{r+k}^{a_{r+k}(j)} \leq x^{\beta_j} (\forall j)}} \frac{1}{y_{k+1} \cdots y_{r+k}} dy_{r+k} \cdots dy_{k+1}  \end{align*}
We now make the substitution $y_{k+i} = \exp(u_i \log x) $ for all $1 \leq i \leq r$ to obtain
    \begin{align*}
x^{\sum_{1 \leq j \leq k} \frac{\beta_j}{2}} (\log x)^{r} \int \cdots \int_{\substack{u_1, \dots, u_{r} \in [0,\infty) \\ \sum_{i=1}^{r}  h^{(i)}(\bm{e}_j) u_i\leq \beta_j \,\,(\forall j)}}  du_{1} \cdots du_{r}.
\end{align*}
Therefore, we get 
\begin{align*}
    \mathcal{I}_{\bm{\beta}} &= \int_{\mathcal{A}_{\bm{\beta}}} du_{1} \cdots du_{r}
\end{align*}
as claimed. 

It remains to show that for any $1 \leq d \leq k$ and $1\leq j_1,\dots,j_d \leq k$ the integral of 
\begin{align}\label{small contribution}
\frac{x^{\sum_{j=1}^k \frac{\beta_j}{2}}}{y_{k+1}\cdots y_{r+k}} R(j_1,\dots,j_d) 
\end{align}
gives a negligible contribution. Without loss of generality, we can assume that $j_1=1,\dots,j_d=d$. Then, setting $y_{k+i}=\exp \left( u_i \log x\right)$, we have
\begin{align*}
& \int \cdots\int_{\substack{y_{k+1}, \dots, y_{r+k} \in [1,\infty)\\ y_{k+1}^{a_{k+1}(j)} \cdots y_{r+k}^{a_{r+k}(j)} \leq x^{\beta_j} (\forall j)}} \frac{x^{\sum_{j=1}^k \frac{\beta_j}{2}}}{y_{k+1}\cdots y_{r+k}}\frac{\sqrt{\prod_{i=1}^r y_{k+i}^{h^{(i)}(\bm{e}_{1})+\dots+h^{(i)}(\bm{e}_{d})}}}{x^{\frac{\beta_{1}}{2}+\dots+\frac{\beta_{d}}{2}}} dy_{r+k} \cdots dy_{k+1} \\
&= x^{\sum_{j=d+1}^k \frac{\beta_j}{2}}(\log x)^r \int_{\substack{u_1, \dots,u_r \geq 0 \\ \sum_{i=1}^r h^{(i)}(\bm{e}_j)u_i \leq \beta_j \ (\forall j)}} \exp\left(\frac{\log x}{2}\sum_{i=1}^r \sum_{j=1}^d h^{(i)}(\bm{e}_j)u_i \right) du_1\cdots du_r.
\end{align*}
The conditions $\sum_{i=1}^r h^{(i)}(\bm{e}_j)u_i \leq \beta_j$ for $2 \leq j \leq d$ imply that the above integral is bounded by
\begin{align}\label{oneLinear}
& x^{\sum_{j=2}^k \frac{\beta_j}{2}}(\log x)^r \int_{\substack{u_1, \dots,u_r \geq 0 \\ \sum_{i=1}^r h^{(i)}(\bm{e}_j)u_i \leq \beta_j \ (\forall j)}} \exp\left(\frac{\log x}{2}\sum_{i=1}^r h^{(i)}(\bm{e}_1)u_i \right) du_1\cdots du_r.
\end{align}
From the definition of $h^{(i)}$, there exist $i_1 < \cdots<i_{k-1}$ such that $h^{(i_1)}(\bm{e}_1), \dots, h^{(i_{k-1})}(\bm{e}_1)=1$, and there also exists $j^\prime>1$ such that $h^{(i_{k-1})}(\bm{e}_{j^\prime})=1$ and $h^{(i_1)}(\bm{e}_{j^\prime}), \dots, h^{(i_{k-1})}(\bm{e}_{j^\prime})=0$. 
Let $$\mathcal{C}_{j'} =  \left\{\mathbf{u}=(u_1,\dots,u_r), i\neq i_{k-1}, \beta_1-\sum_{i \neq i_{k-1}}h^{(i)}(\bm{e}_1)u_i \leq \beta_{j^\prime}-\sum_{i \neq i_{k-1}}h^{(i)}(\bm{e}_{j^\prime})u_i \right\}.$$
We first integrate with respect to the variable $u_{i_{k-1}}$ in \eqref{oneLinear} to obtain
\begin{align*}
&
\int_{\substack{u_1, \dots,u_r \geq 0 \\ \sum_{i=1}^r h^{(i)}(\bm{e}_j)u_i \leq \beta_j \ (\forall j)}} \exp\left(  \frac{\log x}{2}\sum_{i=1}^r h^{(i)}(\bm{e}_1)u_i \right) du_1\cdots du_r \\
&= \frac{2x^{\frac{\beta_1}{2}}}{\log x} 
\int_{\substack{u_i\geq 0 \ (i \neq i_{k-1}), \ \mathbf{u} \in \mathcal{C}_{j'}\\ \sum_{i \neq i_{k-1}} h^{(i)}(\bm{e}_j)u_i \leq \beta_j \ (\forall j) \\ }} du_i \\
&\quad + \frac{2x^{\frac{\beta_{j^\prime}}{2}}}{\log x} \int_{\substack{u_i\geq 0 \ (i \neq i_{k-1}), \ \mathbf{u} \notin \mathcal{C}_{j'}\\ \sum_{i \neq i_{k-1}} h^{(i)}(\bm{e}_j)u_i \leq \beta_j \ (\forall j) }} e^{ \left(\frac{\log x}{2}\sum_{i \neq i_{k-1}} (h^{(i)}(\bm{e}_1)-h^{(i)}(\bm{e}_{j^\prime})) u_i \right)} du_i \\
&\quad-\frac{4}{\log x} \int_{\substack{u_i\geq 0 \ (i \neq i_{k-1}) \\ \sum_{i \neq i_{k-1}} h^{(i)}(\bm{e}_j)u_i \leq \beta_j \ (\forall j) }} e^{\frac{\log x}{2}\sum_{i \neq i_{k-1}} h^{(i)}(\bm{e}_1) u_i} du_i.
\end{align*}

The first integral is finite giving a contribution $O\left(x^{\frac{\beta_1}{2}}/\log x\right)$. The second integral is also evaluated as
\begin{align*}
&\frac{2x^{\frac{\beta_{j^\prime}}{2}}}{\log x} \int_{\substack{u_i\geq 0 \ (i \neq i_{k-1}), \ \mathbf{u} \notin \mathcal{C}_{j'} \\ \sum_{i \neq i_{k-1}} h^{(i)}(\bm{e}_j)u_i \leq \beta_j \ (\forall j)}} e^{ \left(\frac{\log x}{2}\sum_{i \neq i_{k-1}} (h^{(i)}(\bm{e}_1)-h^{(i)}(\bm{e}_{j^\prime})) u_i \right)} du_i \\
&\leq  \frac{2x^{\frac{\beta_1}{2}}}{\log x} 
\int_{\substack{u_i\geq 0 \ (i \neq i_{k-1}), \ \mathbf{u} \notin \mathcal{C}_{j'} \\ \sum_{i \neq i_{k-1}} h^{(i)}(\bm{e}_j)u_i \leq \beta_j \ (\forall j)}} du_i
\end{align*}
since $\beta_1-\sum_{i \neq i_{k-1}, i_{k-2}}h^{(i)}(\bm{e}_1)u_i \geq \beta_{j^\prime}-\sum_{i \neq i_{k-1}}h^{(i)}(\bm{e}_{j^\prime})u_i$. Hence, the contribution of the second integral is also $O\left(x^{\frac{\beta_1}{2}}/\log x\right)$. Repetition of this procedure leads to the result that 
\begin{align*}
    \frac{2}{\log x} \int_{\substack{u_i\geq 0 \ (i \neq i_{k-1}) \\ \sum_{i \neq i_{k-1}} h^{(i)}(\bm{e}_j)u_i \leq \beta_j \ (\forall j) }} e^{\frac{\log x}{2}\sum_{i \neq i_{k-1}} h^{(i)}(\bm{e}_1) u_i} du_i \ll \frac{x^{\beta_1}}{(\log x)^2}.
\end{align*}
Therefore, the integral (\ref{oneLinear}) is $O\left(x^{\sum \frac{\beta_j}{2}} (\log x)^{r-1} \right)$, and we find that the contribution coming from integrating the terms in (\ref{small contribution}) is negligible.

To conclude, since $\zeta(2s_j+1)2s_j \to 1$ as $s_j \to 0$ and $\zeta(s_{l_1}+s_{l_2})(s_{l_1}+s_{l_2}) \to 1$ as $s_{l_1}+s_{l_2} \to 0$, by (\ref{E(s)}) we have
\begin{align*}
H(\bm{0}) = E(\bm{1/2}) &=\prod_{p} \left( 1+ \left(1- \frac{1}{p+1}\right) \sum_{n=1}^\infty \binom{k+2n-1}{k-1}\frac{1}{p^n} \right) \left(1- \frac{1}{p} \right)^\frac{k(k+1)}{2} \end{align*}
Using that  $$\sum_{n=0}^{+\infty}  \binom{n+k}{k} x^n =\frac{1}{(1-x)^{k+1}}, \qquad \vert x\vert < 1, \qquad k\in \mathbb{N}$$ a quick computation reveals that 
 \begin{equation*} E(\bm{1/2})= \prod_{p} \frac{\left(1- \frac{1}{p} \right)^{\frac{k(k+1)}{2}}}{1+\frac{1}{p}} \left(\frac{1}{p}+\frac{1}{2}\left[\left(1-\frac{1}{\sqrt{p}}\right)^{-k}+\left(1+\frac{1}{\sqrt{p}}\right)^{-k} \right]\right).
\end{equation*}
Therefore, the proof is complete.
\end{proof}

\subsection{Proof of Theorem \ref{asymptotic for S_k(X,Y)}}
From the definition of $S_k(X,Y)$ we have
\begin{align*}
     S_k(X,Y) &= \sideset{}{^*}{\sum}_{0 <d \leq X} \left(\sum_{n \leq Y}\chi_d(n)\right)^{k} \\
     &= \sum_{n_1 \leq Y} \dots \sum_{n_{k} \leq Y} \,\,\sideset{}{^*}{\sum}_{0<d \leq X} \left(\frac{d}{n_1\cdots n_{k}}\right).
\end{align*}
Then we apply Lemma \ref{sqrfree} to obtain
\begin{align}
\label{expansion}
\begin{split}
S_k(X,Y) &= \frac{X}{\zeta(2)}\sum_{n_1,\dots,n_k \leq Y} \prod_{\substack{p \mid n_1\cdots n_{k} \\ n_1 \cdots n_{k}=\square}} \frac{p}{p+1}  
      +O \left( X^{\frac{1}{2}+\varepsilon} Y^k \right) \\
      & \qquad +O \left( X^{\frac{1}{2}} (\log Y) \sum_{n_1,\dots,n_k \leq Y} (n_1\cdots n_{k})^\frac{1}{4}\right) \\
    &=\frac{X}{\zeta(2)}\sum_{n_1,\dots,n_k \leq Y}  \prod_{\substack{p \mid n_1\cdots n_{k} \\ n_1 \cdots n_{k}=\square}} \frac{p}{p+1} 
     +O \left( X^{\frac{1}{2}+\varepsilon}Y^{k}\right) +O \left( X^\frac{1}{2}Y^\frac{5k}{4}(\log Y)\right).
\end{split}
\end{align}
From Lemma \ref{count_squares} and Lemma \ref{leading_coefficient} with $\bm{\beta}=\bm{1}$, then we find that $\mathcal{A}_{\bm{\beta}}= \mathcal{A}_k$ and 
\begin{align*}
    \sum_{n_1 \leq Y} \dots \sum_{n_{k} \leq Y} \prod_{\substack{p \mid n_1\cdots n_{k} \\ n_1 \cdots n_{k}=\square}} \frac{p}{p+1} = c_k \gamma_k Y^\frac{k}{2}(\log Y)^{\frac{k(k-1)}{2}}+O\left( Y^\frac{k}{2}(\log Y)^{\frac{k(k-1)}{2}-1} \right).
\end{align*}
Comparing error terms in (\ref{expansion}), the asymptotic formula holds when $Y \ll X^{\frac{2}{3k}}(\log X)^{\frac{2}{3}(k-1)-\frac{4}{3k}}.$
\section{Proofs of the upper bounds}

We will need the following simple lemma.

\begin{lemma}\label{intreal}
Let $X$ be a large parameter, then 
$$I(X):= \int_{1/\log X}^{10}\int_{1/\log X}^{10} (xy)^{-3/4}\frac{1}{\sqrt{y-x}\sqrt{x+y}} \mathds{1}_{y-x \geq 1/\log X} dxdy\ll (\log X)^{1/2}.$$
\end{lemma}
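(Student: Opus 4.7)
\medskip

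\noindent\textbf{Proof proposal.} The plan is to exploit a homogeneity of the integrand: up to the weight $(xy)^{-3/4}$, the kernel $1/(\sqrt{y-x}\sqrt{x+y})$ is scale-invariant under $(x,y)\mapsto (\lambda x,\lambda y)$, which suggests the substitution $x=yt$ with $0<t<1$. First, I would note that the cutoff $y-x\ge 1/\log X$ forces $y>x$, so the change of variables is legitimate. Computing yields
\[
(xy)^{-3/4}=y^{-3/2}t^{-3/4},\qquad \sqrt{(y-x)(x+y)}=y\sqrt{1-t^2},\qquad dx=y\,dt,
\]
so the integrand transforms into $y^{-3/2}\,t^{-3/4}(1-t^2)^{-1/2}\,dt\,dy$, which is already separated in $y$ and $t$.

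Next I would track how the region transforms. The constraints $x\ge 1/\log X$ and $y-x\ge 1/\log X$ become $t\ge 1/(y\log X)$ and $t\le 1-1/(y\log X)$; the constraint $x\le 10$ is automatic for $y\le 10$. Hence
\[
I(X)\ll\int_{2/\log X}^{10} y^{-3/2}\left(\int_{1/(y\log X)}^{1-1/(y\log X)}\frac{t^{-3/4}}{\sqrt{1-t^2}}\,dt\right)dy,
\]
with the low range $y<2/\log X$ contributing only a measure-zero set of $x$.

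The key observation is that the inner integral is \emph{uniformly bounded} in $y$ and $X$. Indeed, the potentially problematic behaviour is at $t=0$, where the integrand is $\sim t^{-3/4}$ and hence integrable, and at $t=1$, where it is $\sim (2(1-t))^{-1/2}$ and also integrable. Therefore
\[
\int_{1/(y\log X)}^{1-1/(y\log X)}\frac{t^{-3/4}}{\sqrt{1-t^2}}\,dt\le\int_0^1\frac{t^{-3/4}}{\sqrt{1-t^2}}\,dt=\tfrac{1}{2}B(\tfrac{1}{8},\tfrac{1}{2})<\infty,
\]
after the substitution $u=t^2$.

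Plugging this uniform bound back in, the outer integral is elementary:
\[
I(X)\ll\int_{2/\log X}^{10} y^{-3/2}\,dy\le 2\left(\frac{\log X}{2}\right)^{1/2}\ll (\log X)^{1/2},
\]
which is the claimed bound. There is no real obstacle here once the substitution $x=yt$ is made; the only subtlety is verifying the integrability at the two endpoints $t=0,1$ so that the inner integral can be replaced by a finite absolute constant before doing the $y$-integration (which is where the $(\log X)^{1/2}$ arises, from the $y^{-3/2}$ tail down to $y\asymp 1/\log X$).
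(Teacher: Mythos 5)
Your proof is correct. You take a genuinely different (though equally elementary) route from the paper: the paper substitutes $u=y-x$, bounds $(x+y)^{-1/2}\ll x^{-1/2}$, and then shows the inner $u$-integral $\int u^{-1/2}(u+x)^{-3/4}\,du\ll x^{-1/4}$, so that the outer integral becomes $\int_{1/\log X}^{10}x^{-3/2}\,dx\ll(\log X)^{1/2}$. You instead exploit the homogeneity of the kernel via $x=yt$, which fully separates the variables: the $t$-integral is bounded by the absolute constant $\tfrac12 B(\tfrac18,\tfrac12)$, and the entire logarithmic growth is isolated in the clean one-dimensional tail $\int_{2/\log X}^{10}y^{-3/2}\,dy$. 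Both arguments reduce to the same $r^{-3/2}$ tail over $[c/\log X,10]$; yours has the advantage that the cutoffs $x\ge 1/\log X$ and $y-x\ge 1/\log X$ are not even needed for convergence of the inner integral (only the lower limit in $y$ matters), which makes the source of the $(\log X)^{1/2}$ completely transparent, while the paper's version requires a short case analysis ($u\le x$ versus $u\ge x$) to control the inner integral. All your computations (the Jacobian, the transformed constraints, the Beta-function evaluation after $u=t^2$, and the final $y$-integration) check out; the only cosmetic point is that for $y<2/\log X$ the admissible $t$-range is actually empty rather than merely of measure zero, which only strengthens your estimate.
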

\begin{proof}
By making the transformation $u=y-x$ we get
   \begin{align*} I(X) & \ll  \int_{1/\log X}^{10} x^{-5/4} \left(\int_{1/\log X}^{10} \frac{1}{\sqrt{u}}\frac{1}{(u+x)^{3/4}} du \right) dx \\
   & \ll \int_{1/\log X}^{10} x^{-3/2}  dx  \ll \sqrt{\log X}. \end{align*} 
   \end{proof}

\subsection{Proof of Theorem \ref{th_smooth}} 
Recall the definition of the smoothed moments
$$S_k(X,Y,W)= \sideset{}{^*}{\sum}_{0<d \leq X} \left\vert \sum_{ n\geq 1} \chi_d(n)W(n/Y) \right\vert^k $$
where $W$ is any non-negative, smooth function compactly supported on the set of positive real numbers. 
We follow the initial manipulations performed in \cite{GZ} and arrive to the bound
\begin{equation}\label{Skreduced} S_k(X,Y,W) \ll Y^{k/2} \sideset{}{^*}{\sum}_{0<d \leq X}
   \Big | \int\limits_{\substack{ (1/2) \\ |t| \leq X^{\varepsilon}}}\Big |L(1/2+it, \chi_d) \widehat{W}(1/2+it)\Big|  dt \Big|^{k} \end{equation}
where $\widehat{W}$ is the Mellin transform of $W$ and verifies for any integer $A \geq 0$
\begin{equation}\label{decayW} \widehat{W}(s) \ll \frac{1}{(1+\vert s\vert)^A}.\end{equation} 
 The following result is our main improvement over \cite[Proposition $5.1$]{GZ}. 
\begin{proposition}
\label{bound_intshift}
 Under the assumption of GRH, we have for any fixed integer $k \geq 2$ and any real $10 \leq E=X^{O(1)}$,
\begin{equation*}
  \sideset{}{^*}{\sum}_{0<d \leq X}  \bigg(\int\limits_{0}^{E}|L(\tfrac{1}{2}+it,\chi_d)| d t\bigg)^{k} 
 \ll  X\big( (\log X)^{\frac{k(k-1)}{2}}E^{k}(\log \log E)^{O_k(1)}.
\end{equation*}
\end{proposition}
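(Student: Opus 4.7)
The starting point is to expand the $k$-th power as a $k$-fold integral,
$$\left(\int_0^E |L(\tfrac{1}{2}+it,\chi_d)| \, dt\right)^k = \int_{[0,E]^k} \prod_{j=1}^k |L(\tfrac{1}{2}+it_j,\chi_d)| \, d\mathbf{t},$$
then interchange summation and integration, and apply Proposition~\ref{th_shifted} to the inner sum over $d$ (up to the cosmetic adjustment between $\chi_d$ and $\chi^{(8d)}$). The problem is thereby reduced to establishing the analytic bound
$$\mathcal{J}_k := \int_{[0,E]^k} \prod_{1\le i<j\le k} g(|t_i-t_j|)^{1/2} g(t_i+t_j)^{1/2} \prod_{i=1}^k g(2t_i)^{3/4} \, d\mathbf{t} \;\ll\; (\log X)^{\frac{k(k-1)}{2}-\frac{k}{4}} E^k (\log \log E)^{O_k(1)}.$$
By the symmetry of the integrand, one may restrict to the ordered simplex $0 \le t_1 \le \cdots \le t_k \le E$ at the cost of a factor $k!$.

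The plan is to prove this bound on $\mathcal{J}_k$ by induction on $k$. For the base case $k=2$, I would decompose $[0,E]^2$ according to whether each of $t_1$, $t_2$, $|t_1-t_2|$ lies in $[0,1/\log X]$, $[1/\log X,10]$, or $[10,E]$. The dominant piece corresponds to $t_1,t_2\in[1/\log X,5]$ with $|t_1-t_2|\ge 1/\log X$, where the integrand equals (up to a constant) $(t_1 t_2)^{-3/4}\bigl(|t_1-t_2|(t_1+t_2)\bigr)^{-1/2}$, and Lemma~\ref{intreal} delivers the target $(\log X)^{1/2}$. The remaining boundary pieces contribute strictly less, since forcing a variable below $1/\log X$ costs $1/\log X$ in measure while raising $g$ to at most $\log X$, and pushing a variable above $10$ converts its $g$-factor to the bounded $\log\log E$ regime while gaining only $E$ in the range of integration.

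For the inductive step I would isolate the largest variable $t_k$ and split the $t_k$-integral into the \emph{separated} range $t_k\ge 2t_{k-1}$ and the \emph{clustered} range $t_k\in[t_{k-1},2t_{k-1}]$. In the separated range one has $t_k-t_i\asymp t_i+t_k\asymp t_k$ for every $i<k$, so the $k-1$ pair-factors involving $t_k$ collapse (up to constants) to $g(t_k)^{k-1}$ by the near-monotonicity of $g$ on dyadic intervals away from the transition point $x=10$. The $t_k$-slice integral then reduces to
$$\int g(2t_k)^{3/4} g(t_k)^{k-1} \, dt_k \ll (\log X)^{k-5/4}\,E\,(\log \log E)^{O_k(1)},$$
which follows from the three-range estimate on $g$: over $[1/\log X,10]$ the integrand is $\sim t_k^{-(k-1/4)}$ and integrates to $(\log X)^{k-5/4}$; over $[10,E]$ it is at most $(\log \log E)^{k-1/4}$ and integrates to $E(\log\log E)^{O_k(1)}$; over $[0,1/\log X]$ it is $\ll (\log X)^{k-1/4}$ on a set of measure $1/\log X$, again contributing $(\log X)^{k-5/4}$. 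Multiplying by the inductive bound $\mathcal{J}_{k-1}\ll(\log X)^{\frac{(k-1)(k-2)}{2}-\frac{k-1}{4}}E^{k-1}(\log\log E)^{O_{k-1}(1)}$, the exponents of $\log X$ telescope as $\tfrac{(k-1)(k-2)}{2}-\tfrac{k-1}{4}+\tfrac{4k-5}{4}=\tfrac{k(k-1)}{2}-\tfrac{k}{4}$, delivering the claimed estimate.

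The principal obstacle is the clustered regime, where a block of consecutive $t_i$'s piles up in a short interval and many pair-factors $g(t_j-t_i)^{1/2}$ simultaneously saturate to $(\log X)^{1/2}$. A worst-case bound on the integrand there overshoots the target exponent; to close the induction one must exploit the fact that squeezing $m$ variables into an interval of length $\ell$ contributes only $\ell^{m-1}$ in measure, which via Hölder's inequality precisely cancels the $\binom{m}{2}$ saturated pair-factors. The cleanest book-keeping is presumably to group the ordered variables into maximal dyadic clusters, run the induction on the cluster structure, and reduce each cluster to a lower-order instance of the same estimate — which is exactly the ``new induction argument'' alluded to at the end of Section~2.
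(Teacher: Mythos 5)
Your reduction to the integral $\mathcal{J}_k$, the base case $k=2$, and the telescoping of exponents are all in line with the paper, but the induction step has a genuine, self-acknowledged gap: the clustered regime is exactly where your argument does not close, and the dyadic-cluster/H\"older bookkeeping you gesture at is never executed. As written, the proposal proves the separated range and then defers the hard case to a strategy that is plausible but unverified, so it does not constitute a proof.

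The paper's induction shows that no separation analysis is needed at all, which is the point you are missing. Instead of peeling off the largest variable and trying to exploit $|t_k-t_i|\asymp t_k$, peel off the \emph{smallest} variable $t_1$ and bound the factor it carries, $F=g(2t_1)^{3/4}\prod_{j\ge 2}g(|t_1-t_j|)^{1/2}g(t_1+t_j)^{1/2}$, \emph{pointwise in $t_1$ alone}: take the trivial bound $g(|t_1-t_j|)^{1/2}\le(\log X)^{1/2}$ for every $j$ (costing $(\log X)^{(k-1)/2}$ whether or not the variables cluster), and extract all the decay from the sum factors via $g(t_1+t_j)\ll g(t_1)\log\log E$ (valid since $t_1+t_j\ge t_1$), giving $F\ll h(t_1)$ with $h(t_1)\asymp (\log X)^{(k-1)/2}\,t_1^{-(k-1)/2-3/4}(\log\log E)^{O_k(1)}$ on $[1/\log X,10]$, and the obvious analogues on $[0,1/\log X]$ and $[10,E]$. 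Because $h$ depends on no other variable, it factors out of the simplex integral, leaving $J_{k-1,E}$ times $\int_0^E h$; the critical range contributes $(\log X)^{(k-1)/2}\cdot(\log X)^{(k-1)/2-1/4}=(\log X)^{k-5/4}$, which together with the $(\log X)^{1/4}$ normalization is exactly the increment $(\log X)^{k-1}$ needed to pass from $\frac{(k-1)(k-2)}{2}$ to $\frac{k(k-1)}{2}$. Note that your ``separated'' refinement ($g(t_k)^{k-1}$ in place of $(\log X)^{(k-1)/2}g(t_k)^{(k-1)/2}$) produces the same $(\log X)^{k-5/4}$ after integration, so it buys nothing; the trivial bound on the difference factors is already sharp enough, and it handles the clustered configuration for free.
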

 The following Lemma follows from Proposition \ref{bound_intshift} and implies Theorem \ref{th_smooth} by \eqref{Skreduced} .
\begin{lemma}\label{lem_intbound}
Assume the truth of GRH. We have for any integer $k \geq 2$ and any $ \varepsilon>0$, 
\begin{equation*}
\label{int_bound}
\sideset{}{^*}{\sum}_{0<d \leq X}
   \Big | \int\limits_{\substack{ (1/2) \\ |t| \leq X^{\varepsilon}}}\Big |L(1/2+it, \chi_d)\widehat{W}(1/2+it)\Big| dt\Big |^{k} \ll X(\log X)^{\frac{k(k-1)}{2 }}.
\end{equation*}
\end{lemma}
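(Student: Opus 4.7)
The plan is to reduce Lemma~\ref{lem_intbound} to Proposition~\ref{bound_intshift} by exploiting the rapid decay of $\widehat{W}$ in dyadic shells and then using Jensen's inequality to convert a $k$-th power of a sum into a weighted sum of $k$-th powers. The main obstacle is that Proposition~\ref{bound_intshift} produces a factor $E^k$ which grows with the length of the integration range; the rapid decay \eqref{decayW} of $\widehat{W}(1/2+it)$ will absorb this growth once we dyadically decouple the high-frequency regime from the low-frequency one.

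Concretely, set $T_0 = 1$ and $T_j = 2^j$ for $j \ge 1$, and let $j_{\max}$ be the smallest index with $T_{j_{\max}} \ge X^\varepsilon$. Decompose
\[
\int_{\substack{(1/2)\\ |t|\leq X^\varepsilon}} |L(\tfrac12+it,\chi_d)\,\widehat{W}(\tfrac12+it)|\,dt \;\leq\; \sum_{j=0}^{j_{\max}} \omega_j\,\mathcal{J}_j(d),
\]
where $\omega_0 := \sup_{|t|\leq 1}|\widehat{W}(\tfrac12+it)| \ll 1$ and for $j \ge 1$, $\omega_j := \sup_{T_{j-1}\leq |t|\leq T_j}|\widehat{W}(\tfrac12+it)| \ll_A T_j^{-A}$ by \eqref{decayW}, and
\[
\mathcal{J}_j(d) := \int_{T_{j-1}\leq |t| \leq T_j} |L(\tfrac12+it,\chi_d)|\,dt \;\leq\; \int_{|t|\leq T_j}|L(\tfrac12+it,\chi_d)|\,dt.
\]

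Choosing $A = k+2$, we have $\omega_\star := \sum_{j} \omega_j = O(1)$. Setting $p_j := \omega_j/\omega_\star$, the convexity of $x \mapsto x^k$ (Jensen's inequality) yields
\[
\Big(\sum_{j}\omega_j\,\mathcal{J}_j(d)\Big)^k = \omega_\star^k\Big(\sum_{j} p_j\,\mathcal{J}_j(d)\Big)^k \leq \omega_\star^{k-1}\sum_{j}\omega_j\,\mathcal{J}_j(d)^k.
\]
Summing over $d$ and interchanging sums gives
\[
\sideset{}{^*}{\sum}_{0<d\leq X}\Big(\int_{\substack{(1/2)\\|t|\leq X^\varepsilon}} |L\,\widehat{W}|\,dt\Big)^k \ll \sum_{j=0}^{j_{\max}} \omega_j \sideset{}{^*}{\sum}_{0<d\leq X}\mathcal{J}_j(d)^k.
\]
By symmetry of $|L(\tfrac12+it,\chi_d)|$ in $t$ and Proposition~\ref{bound_intshift} applied with $E = \max(T_j,10)$, each inner sum is
\[
\sideset{}{^*}{\sum}_{0<d\leq X}\mathcal{J}_j(d)^k \ll X(\log X)^{\frac{k(k-1)}{2}}\,T_j^k\,(\log\log(T_j+10))^{O_k(1)}.
\]

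Combining these bounds,
\[
\sideset{}{^*}{\sum}_{0<d\leq X}\Big(\int_{\substack{(1/2)\\|t|\leq X^\varepsilon}} |L\,\widehat{W}|\,dt\Big)^k \ll X(\log X)^{\frac{k(k-1)}{2}}\sum_{j\geq 0} T_j^{-A+k}(\log\log(T_j+10))^{O_k(1)}.
\]
With $A = k+2$, the series $\sum_{j\ge 0} T_j^{-2}(\log\log(T_j+10))^{O_k(1)}$ converges, yielding the claimed estimate. The main difficulty is precisely the interplay between the dyadic decomposition, the rapid decay of $\widehat{W}$, and the growth factor $E^k$ from Proposition~\ref{bound_intshift}; Jensen's inequality is the efficient tool that lets us transfer from the $k$-th power of the full integral to a \emph{single} dyadic scale at a time, losing no logarithm in the process and preserving the sharp exponent $k(k-1)/2$.
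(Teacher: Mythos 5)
Your proof is correct and follows essentially the same route as the paper: both decompose $[0,X^{\varepsilon}]$ into exponentially growing shells, use a convexity inequality (your Jensen step with weights $\omega_j$ versus the paper's H\"older step with weights $n^{a}$ over shells $[e^{n-1}-1,e^{n}-1]$) to reduce the $k$-th power of the full integral to $k$-th moments on individual shells, and then apply Proposition~\ref{bound_intshift} shell by shell, letting the rapid decay \eqref{decayW} of $\widehat{W}$ absorb the $E^{k}$ growth. The differences (dyadic versus $e^{n}$ shells, and weighting Jensen directly by $\sup|\widehat{W}|$ rather than by a polynomial in the shell index) are cosmetic.
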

 \begin{proof}
 Our proof closely follows the argument in \cite{SZ1}. Using Minkowski's inequality and H\"{o}lder's inequality, we get for $a=1-1/k+\varepsilon$ that
\begin{align}\label{dyadic}
\begin{split}
 \Big | \int\limits_{ |t| \leq X^{\varepsilon}}  & \Big |L(\tfrac{1}{2}+it, \chi_d)\widehat{W}(1/2+it)\Big| dt \Big |^{k} \ll \Big |\int_0^{X^{\varepsilon}} |L(\tfrac{1}{2}+it,\chi_d)| \Big|\widehat{W}(1/2+it)\Big| dt\Big |^{k} \nonumber \\
 & \leq \bigg(\sum_{n\leq \log X+1} n^{-ak/(k-1)} \bigg)^{k-1}
    \sum_{n\leq  \log X+1} \bigg(n^a\int\limits_{e^{n-1}-1}^{e^{n}-1 } \Big|L(\tfrac{1}{2}+it,\chi_d)  \widehat{W}(1/2+it)\Big| dt\bigg)^{k}   \\
  & \ll \sum_{n\leq  \log X+1} \frac{n^{k-1+\varepsilon} }{e^{10nk} } \bigg( \int\limits_{e^{n-1}-1}^{e^{n}-1 } |L(\tfrac{1}{2}+it,\chi_d) | dt \bigg)^{k}
\end{split}\end{align} where we used \eqref{decayW} in the last step. Proposition \ref{bound_intshift} implies that for any integer $k \geq 2$ and any real number $\varepsilon>0$, 
\begin{align*} \sum_{n\leq  \log X+1} & \frac{n^{k-1+\varepsilon} }{e^{10nk} }\sideset{}{^*}{\sum}_{0<d \leq X}  \bigg( \int_{e^{n-1}-1}^{e^{n}-1 } |L(1/2+it,\chi_d) | dt \bigg)^{k}\\ \ll & X (\log X)^{\frac{k(k-1)}{2}}\sum_{n\leq  \log X+1} \frac{n^{k-1+\varepsilon} (\log n)^{O_k(1)}}{e^{9nk} }  \ll X (\log X)^{\frac{k(k-1)}{2}}\end{align*} which concludes the proof of Lemma \ref{lem_intbound}. \end{proof}
\subsection{Proof of Proposition \ref{bound_intshift}}

 By symmetry, we have
\begin{equation}\label{ind_bound}
 I_{k,E}:= \sideset{}{^*}{\sum}_{0<d \leq X} \bigg(\int\limits_{0}^{E}|L(\tfrac{1}{2}+it,\chi_d)| d t\bigg)^{k} 
      \ll \sideset{}{^*}{\sum}_{0<d \leq X} \int\limits_{\mathcal{A}_E}\prod_{a=1}^k|L(1/2+ it_a, \chi_d)| d \mathbf{t}, \end{equation}
where $\mathcal{A}_E=\{ (t_1,\dots,t_k) \in [0,E]^k: 0 \leq t_1 \leq t_2 \dots \leq t_k\}$. 
Summing over $d$ and applying Theorem \ref{th_shifted} we get 
$$ I_{k,E}\ll \int\limits_{\mathcal{A}_E} X(\log X)^{k/4} \prod_{1\leq i<j\leq k} g(|t_i-t_j|)^{1/2}g(|t_i+t_j|)^{1/2}\prod_{1\leq i\leq k} g(|2t_i|)^{3/4} d \mathbf{t}. $$
Letting \begin{equation}\label{defJ}
J_{k,E}:= (\log X)^{k/4}  \int\limits_{\mathcal{A}_E} \prod_{1\leq i<j\leq k} g(|t_i-t_j|)^{1/2}g(|t_i+t_j|)^{1/2}\prod_{1\leq i\leq k} g(|2t_i|)^{3/4} d \mathbf{t}, \end{equation} we want to show that for all integers $k\geq 2$ \begin{equation}\label{ind_hyp}
J_{k,E} \ll E^k (\log X)^{\frac{k(k-1)}{2}}(\log \log E)^{O_k(1)}
\end{equation} where the implied constant depends only on $k$.
We proceed by induction on the number of variables $k$.
\subsubsection{The base case $k=2$} 
We make repeated use of the bounds \eqref{gDef} on the function $g$.
We first remark that if $t_2 \geq t_1 \geq 10$ we can trivially bound every term in \eqref{defJ} by $(\log \log E)$ except one term where we use $g(t_2-t_1) \leq \log X.$ Hence the contribution to $J_{2,E}$ is at most $\ll E^2 (\log X) (\log \log E)^{2}$. If $t_1 \leq 1/\log X$ and $ t_2 \leq 2/\log X$, a trivial volume argument bounds the contribution by $\ll \log X.$  A similar bound holds in the case  $t_1 \leq 1/\log X$ and $t_2 \geq 2/\log X $. For $1/\log X \leq t_1 \leq 10 \leq t_2$, we get a contribution  $\ll E (\log X)(\log \log E)^{2}$.
Let us turn to the most problematic case  $1/\log X \leq t_1, t_2 \leq 10$. If $t_2-t_1 \leq 1/\log X$, we use the bound $g(t_2-t_1) \ll (\log X)$ and get a contribution to $J_{2,E} $ which is
\begin{align*}  & \ll (\log X)  \int_{1/\log X}^{10}  \frac{1}{t_1^2}  \int_{t_1 \leq t_2 \leq t_1+1/\log X} dt_2 dt_1  \ll \log X.
\end{align*} In the remaining subcase, the result follows directly from Lemma \ref{intreal}.
\subsubsection{The induction step}
We now assume that the following bound holds:
\begin{equation}\label{boundJk1} J_{k-1,E} \ll E^{k-1}(\log X)^{\frac{(k-1)(k-2)}{2}}(\log \log E)^{O_k(1)}. \end{equation}
We will use our induction hypothesis to bound the integral over $t_2,\dots,t_k$ and use a pointwise bound for the remaining factors involving the variable $t_1$. To do so, for any $(t_1,\dots,t_k) \in [0,E]^k$, we let 
$$F(t_1,\dots,t_k):= g(|2t_1|)^{3/4}\prod_{2\leq j\leq k} g(|t_1-t_j|)^{1/2}g(|t_1+t_j|)^{1/2}.$$ We have the bound $F(t_1,\dots,t_k) \ll h_{k,E}(t_1)$ where the function $h:=h_{k,E}$ verifies 
$$ h_{k,E}(t_1)= \begin{cases} (\log X)^{k-1/4}, & t_1 \leq 1/\log X  \\
t_1^{-(k-1)/2-3/4}(\log X)^{(k-1)/2} (\log \log E)^{k-1}, & 1/\log X \leq t_1 \leq 10  \\
(\log X)^{(k-1)/2}(\log \log E)^{(k-1)/2+3/4}, & 10\leq t_1 \leq E.
\end{cases}$$ Indeed, in the first case we trivially used the boud $g(t) \ll \log X$ for each term appearing in the product. In the second case, we notice that $t_1 + t_j \geq t_1$ for any $j=2,\dots,k$. Hence, $g(t_1+t_j) \ll g(t_1) \log \log E \ll \frac{1}{t_1^{1/2}} (\log \log E)^2$. We bound trivially $g(t_1-t_j) \ll \log X$ for $j=2,\dots,k$. In the last case, we use the bound $g(t_1+t_j), g(t_1) \ll \log \log E$ for $j=2,\dots,k$ and $g(t_1-t_j) \ll \log X$ for $j=2,\dots,k$. By \eqref{defJ} we have
$$ \frac{J_{k,E}}{(\log X)^{\frac{1}{4}}} \ll \int_{0}^{E} h(t_1) \left\{(\log X)^{\frac{k-1}{4}} \int\limits_{\mathcal{B}_E} \prod_{2\leq i<j\leq k} g(|t_i-t_j|)^{1/2}g(|t_i+t_j|)^{1/2} \prod_{2\leq i \leq k} g(|2t_i|)^{3/4}  d \mathbf{t} \right\} dt_1 $$
where $\mathcal{B}_E=\{ (t_2,\dots,t_k) \in [0,E]^{k-1}: 0 \leq t_2 \leq t_3 \dots \leq t_k\}$. Thus, by our induction hypothesis,
\begin{align}\label{boundJk} J_{k,E} &\ll J_{k-1,E} \,\,(\log X)^{1/4} \int_{0}^{E} h(t_1)dt_1 \nonumber\\
& \ll E^{k-1}(\log \log E)^{O_k(1)} (\log X)^{\frac{(k-1)(k-2)}{2}+1/4} \int_{0}^{E} h(t_1)dt_1 .\end{align} 
A simple computation shows that 
\begin{align}\label{boundh} \int_{0}^{E} h(t_1)dt_1 & =  \int_{0}^{1/\log X} h(t_1)dt_1 + \int_{1/\log X}^{10} h(t_1)dt_1 +\int_{10}^E h(t_1)dt_1 \nonumber \\
 &\ll (\log X)^{k-5/4} (\log \log E)^{k-1} +(\log X)^{(k-1)/2} E(\log \log E)^{(k-1)/2+3/4}.\end{align}
Hence, by \eqref{boundJk} and \eqref{boundh},
\begin{align*} J_{k,E}& \ll E^{k-1}(\log X)^{\frac{k(k-1)}{2}}(\log \log E)^{k-1+O_k(1)} + (\log X)^{\frac{(k-1)^2}{2}}E^k(\log \log E)^{k/2+1/4+O_k(1)} \\
& \ll (\log X)^{\frac{k(k-1)}{2}}E^k(\log \log E)^{O_k(1)}. \end{align*}
This concludes the proof of the Proposition.

     \subsection{Proof of Theorem \ref{th_theta}}
We omit some details as the proof is completely similar to the proof of Theorem \ref{th_smooth}. Indeed, for every $d>0$ and the associated even primitive character $\chi_d$, we have for $c>1/2$
$$\theta(1,\chi_d)=\int_{c-i\infty}^{c+\infty} L(2s,\chi_d)\left(\frac{d}{2\pi}\right)^{s}\Gamma(2s)ds.$$ Shifting the line of integration to $\Re(s)=1/4$ and using the decay of $\Gamma(s)$ in vertical lines, we end up with 
$$\theta(1,\chi_d)=\left(\frac{d}{\pi}\right)^{\frac{1}{4}}\int_{-\infty}^{\infty}L\left(\frac{1}{2}+2it,\chi_d\right)\left(\frac{d}{\pi}\right)^{2it}\Gamma\left(\frac{1}{2}+2it\right)dt.$$
Hence, we obtain
\begin{equation}\label{moments2k}\sideset{}{^*}{\sum}_{0<d\leq X} \vert \theta(1,\chi_d)\vert^{k}\ll X^{\frac{k}{4}} \sideset{}{^*}{\sum}_{0<d\leq X}\left\vert \int_{-\infty}^{\infty}L\left(\frac{1}{2}+2it,\chi_d\right)\left(\frac{d}{\pi}\right)^{2it}\Gamma\left(\frac{1}{2}+2it\right)dt\right\vert^{k}.\end{equation}  By Stirling's formula, the Gamma function decays faster than any polynomial on vertical lines. For instance, we have \begin{equation*} \Gamma(1/2+it) \ll \frac{1}{(1+\vert t\vert)^{10}}. \end{equation*} Thus, the tails of the integral are easily seen to be negligible, and we are left to bound \begin{equation*}\sideset{}{^*}{\sum}_{0<d\leq X}
   \left( \int\limits_{\substack{ (1/2) \\ |t| \leq X^{\varepsilon}}}\left| L(1/2+2it, \chi_d) \Gamma\left(\frac{1}{2}+2it\right) \right| dt \right)^{k} \end{equation*}    We apply Proposition \ref{bound_intshift} to obtain an analogue of Lemma \ref{lem_intbound} with $\widehat{W}$ replaced by $\Gamma$. This implies Theorem \ref{th_theta}.

     \section{The second moment of $\theta(1,\chi_d)$: Proof of Theorem \ref{theta2nd}}
The lower bound of Theorem \ref{theta2nd} follows from Theorem \ref{lowtheta} for $k=2$. We now switch to the upper bound. We have 
$$  \sideset{}{^*}{\sum}_{0<d \leq X} \vert \theta(1,\chi_d)\vert^2 \ll \sum_{d \in \mathcal{D}(X)} \vert \theta(1,\chi_d)\vert^2 $$ where $\mathcal{D}(X)$ denotes the set of quadratic discriminants $d \leq X$. Letting $\chi_d(n)=1$ when $d$ is a square, we can include the squares in the summation. This does not add more to the sum than $\sum_{k \leq X^{1/2}} \sum_{n=1}^{+\infty} e^{-\pi n^2/k^2} \ll X$. Therefore, we have 
\begin{align*} \sideset{}{^*}{\sum}_{0<d \leq X} \vert \theta(1,\chi_d)\vert^2  &\ll \sum_{m,n} \sum_{0<d \leq X} \chi_d(mn) e^{-\pi (m^2+n^2)/d} =T_2^{sq}+T_2^{nsq}  \end{align*}
where $T_2^{sq}$ denotes the summation over $m,n$ when $mn$ is a square and $T_2^{nsq}$ denotes its counterpart. For fixed $m,n$, 
proceeding \footnote{The only difference comes from the restriction over squarefree $d$'s in \cite{LM} which needs minor modifications.} as in \cite[Lemma $21$]{LM}, we see that $T_2^{sq} \asymp X^{3/2} \log X$ (an explicit asymptotic formula could be proved). It remains to deal with the non-square terms $mn$. In this case there exists a non-principal character $\chi_{mn}$ mod $mn$ or $4mn$ such that $\chi_{mn}(d)=\chi_d(mn)$. We need to bound
$$\sum_{m,n \atop mn \neq \square} \sum_{0<d\leq X} \chi_{mn}(d)e^{-\pi(m^2+n^2)/d}.$$ We first split the summation over $d$ in dyadic intervals  $(X/2^{k+1},X/2^k]$ with $0 \leq k \ll \log X$. We apply Lemma \ref{lem:weight} to the sum over $d$ with $b_d=e^{-\pi (m^2+n^2)/d}$. Clearly $b_d \leq e^{-\pi (m^2+n^2)2^{k-1}/X}$ for any fixed $m,n$ and $d$ in the dyadic range. Moreover, for fixed $m,n$, we have by the mean value inequality 
\begin{align*} V&=\sum_{X/(2^{k+1})<d\leq X/2^k} \left\vert b_{d}-b_{d+1} \right\vert \ll \sum_{X/(2^{k+1})<d\leq X/2^k} \frac{(m^2+n^2)}{d^2} e^{-\pi (m^2+n^2)/d} \\
& \ll  e^{-\pi (m^2+n^2)2^{k-1}/X} \sum_{X/(2^{k+1})<d\leq X/2^k} \frac{1}{d}  \ll e^{-\pi (m^2+n^2)2^{k-1}/X}. \end{align*}

Hence, using P\'{o}lya-Vinogradov inequality and Lemma \ref{lem:weight} we get 
\begin{align*} T_2^{nsq}& \ll \sum_k \sum_{m,n} m^{1/2}n^{1/2} \log(mn) e^{-\pi (m^2+n^2)2^{k-1}/X}  \\
& \ll \sum_k \left( \sum_m m^{1/2} e^{-\pi m^2 2^{k-1}/X} \right)\left( \sum_n n^{1/2} (\log n) e^{-\pi n^2 2^{k-1}/X} \right) \\
& \ll \sum_k (X/2^k)^{3/2} \log X \ll X^{3/2} \log X \end{align*} which concludes the proof.

\section*{Acknowledgement}

During the preparation of this work, M. M. was supported by AAP Recherche 2025 UJM ``Comportements al{\'e}atoires en arithm{\'e}tique". Y. T. would like to thank Dr. Dilip Kumar Sahoo and Professor Kohji Matsumoto for their valuable comments. Some parts of this research were carried out when Y. T. stayed at IISER Berhampur in India from June to July in 2024. Y. T. would like to thank Professor Kasi Viswanadham Gopajosyula
and the staff of IISER Berhampur for their support and hospitality. Y. T. is supported by Grant-in-Aid for JSPS Research Fellow (Grant Number:24KJ1235).

\Addresses
\end{document}